\newtheorem{proposition}{Proposition}[section]
\newtheorem{theorem}[proposition]{Theorem}
\newtheorem{corollary}[proposition]{Corollary}
\newtheorem{lemma}[proposition]{Lemma}
\theoremstyle{remark}
\newtheorem{remark}[proposition]{Remark}
\newtheorem{example}[proposition]{Example}
\numberwithin{equation}{section}
\newcommand{\nc}{\newcommand}
\nc{\I}{{\mathbf 1}}
\nc{\bN}{{\mathbf N}}
\nc{\bM}{{\mathbf M}}
\nc{\cB}{{\mathcal B}}
\nc{\cF}{{\mathcal F}}
\nc{\cM}{{\mathcal M}}
\nc{\R}{{\mathbb R}}
\nc{\N}{{\mathbb N}}
\nc{\Z}{{\mathbb Z}}
\nc{\BI}{{\mathbb I}}
\nc{\BX}{{\mathbb X}}
\nc{\BY}{{\mathbb Y}}
\nc{\cX}{{\mathcal X}}
\nc{\cH}{{\mathcal H}}
\nc{\cN}{{\mathcal N}}
\nc{\cY}{{\mathcal Y}}
\nc{\cS}{{\mathcal S}}
\nc{\cT}{{\mathcal T}}
\nc{\BH}{{\mathbb H}}
\nc{\BS}{{\mathbb S}}
\nc{\BT}{{\mathbb T}}
\nc{\bx}{\mathbf{x}}
\nc{\by}{\mathbf{y}}
\nc{\bz}{\mathbf{z}}
\nc{\bv}{\mathbf{v}}
\nc{\bw}{\mathbf{w}}
\nc{\bu}{\mathbf{u}}
\DeclareMathOperator{\card}{card}
\DeclareMathOperator{\supp}{supp}
\DeclareMathOperator{\conv}{conv}
\DeclareMathOperator{\BV}{{\mathbb Var}}
\DeclareMathOperator{\BC}{{\mathbb Cov}}
\nc{\BP}{\mathbb{P}}
\nc{\BE}{\;\mathbb{E}}
\nc{\BQ}{\mathbb{Q}}
\nc{\bH}{\overline{H}}
\renewcommand{\emptyset}{\varnothing}
\renewcommand{\phi}{\varphi}
\newcommand{\eps}{\varepsilon}
\newcommand{\deltaKS}{\boldsymbol{\delta}}
\begin{document} 

\title{Poisson hulls}


\author{G\"unter Last}
\address{G\"unter Last, Institute for Stochastics, Karlsruhe Institute of  Technology, 
  Englerstrasse 5, 76131 Karlsruhe, Germany}
\email{guenter.last@kit.edu}

\author{Ilya Molchanov}
\address{Ilya Molchanov, Institute of Mathematical Statistics and Actuarial Science, University of Bern, Alpeneggstrasse 22, 3012 Bern, Switzerland}
\email{ilya.molchanov@unibe.ch}


\begin{abstract}
  We introduce a hull operator on Poisson point processes, the easiest
  example being the convex hull of the support of a point process in
  Euclidean space. Assuming that the intensity measure of the process
  is known on the set generated by the hull operator, we discuss
  estimation of an expected linear statistic built on the Poisson
  process.  In special cases, our general scheme yields an
  estimator of the volume of a convex body or an estimator of an
  integral of a H\"older function.  We show that the estimation error
  is given by the Kabanov--Skorohod integral with respect to the
  underlying Poisson process. A crucial ingredient of our approach is
  a spatial strong Markov property of the underlying Poisson process with
  respect to the hull.  We derive the rate of normal convergence for
  the estimation error, and illustrate it on an application to
  estimators of integrals of a H\"older function.  We also discuss
  estimation of higher order symmetric statistics.
\end{abstract}


\subjclass[2010]{Primary: 60G55;  Secondary: 60D05, 62G05, 62M30}

\maketitle

\section{Introduction}\label{intro}

Estimation of a convex body $K$ (a compact convex subset of
  Euclidean space), using the convex hull of points randomly sampled
from it, is a substantial area in statistical inference, see, e.g.,
\cite{brun18,cuev:fraim10}.  This convex hull is a polytope $P$, which
is a subset of $K$, and so provides biased estimators for most of
geometric parameters of $K$, e.g., its volume. While there were some
attempts to eliminate the bias by enlarging the polytope, see
\cite{rip:ras77}, only recently, \cite{BaldinReiss16} came up with an
unbiased estimator of the volume based on the observation of the
convex hull of the points from a homogenous Poisson point process
restricted to $K$. The estimator is the sum of the volume of $P$ and a
term given by the number of vertices in the convex hull normalised by
the intensity of the underlying Poisson process. A similar idea was
pursued in \cite{ReissSelk17} and \cite{ReissWahl19} in the context of
estimation of the integral of a function $\phi$ using pointwise minima
of functions that form a Poisson process with graphs lying above
$\phi$.

A common feature of these approaches is to consider certain hull
operations applied to a point process.  In the convex hull setting of
\cite{BaldinReiss16} this is the usual convex hull, and the vertices
of it are identified as points not belonging to the convex hull of
other points. In the functional setting of \cite{ReissWahl19}, the
hull is the minimum of functions and the role of vertices is played by
functions that contribute to the minimum. This paper aims to develop a
general theory for hull operators of a Poisson process
which unifies and extends the estimators considered in
\cite{BaldinReiss16,ReissSelk17,ReissWahl19}.

In the following we illustrate our general construction on the convex
hull setting of \cite{BaldinReiss16}. Let $\eta$ be a Poisson process
in $\R^d$ with a finite diffuse intensity measure $\lambda$, see
\cite{LastPenrose17}. Denote by $\conv(\eta)$ the convex hull of the
points of $\eta$ (identified with its support) and by $\partial\eta$
the set of vertices of $\conv(\eta)$.  Let $f:\R^d\to\R$ be a function
that is integrable and square integrable with respect to $\lambda$,
and define a function
\begin{align*}
  H_x(\eta):=\I\big\{x\in\big(\R^d\setminus \conv(\eta)\big)
  \cup\partial\eta\big\},\quad x\in\R^d, 
\end{align*}
which is one if $x$ does not belong to the convex hull of $\eta$ or is
a vertex.  Assume that the restriction of $\lambda$ onto $\conv(\eta)$
is known, e.g., $\lambda$ is proportional to the Lebesgue measure with
a known proportionality constant, and the aim is to estimate
$\int f(x)\,\lambda(dx)$ based on observing $\eta$. The challenge here
is to ``extrapolate'' $\lambda$ outside of the observable region given
by the convex hull of $\eta$. We show that
\begin{align*}
  \hat F:= \int \big(1-H_x(\eta)\big) f(x)\lambda(dx)
  + \int f(x) H_x(\eta-\delta_x)\,\eta(dx)
\end{align*}
is an unbiased estimator of $F:=\int f(x)\,\lambda(dx)$. Note that the
second integral on the right-hand side is the sum of $f(x)$ for all
points $x\in\eta$ such that the convex hull of $\eta$ with $x$
removed is distinct from the convex hull of $\eta$. These points are
indeed the vertices of $\conv(\eta)$ and provide an example of
the generator defined in Section~\ref{sec:hull-operator}.
The estimation error can be written as
\begin{equation}
  \label{eq:8}
  \hat F- F=\int f(x) H_x(\eta-\delta_x)\eta(dx)
  -\int f(x) H_x(\eta)\lambda(dx). 
\end{equation}
In stochastic analysis on the Poisson space, the expression on the
right-hand side is known as the Kabanov--Skorohod integral of
$f(x)H_x(\eta)$, see, e.g., \cite{Last16}. The sum over Poisson points,
as seen in the first term on the right-hand side of \eqref{eq:8}, is
well studied in stochastic geometry. Most advanced limit theorems for
such sums can be found in \cite{lac:sch:yuk19}, where these sums are
centred by subtracting the expectation. In difference to this,
\eqref{eq:8} involves subtracting a random term.

The content of the paper can be summarised as follows.  We consider
the space $\bN$ of locally finite counting measures on some space
$\BX$. In the general part of the paper (Sections 2-7) $\BX$ will be
an abstract localised Borel space.  In Section~\ref{sec:hull-operator}
we introduce and study a generator $\partial\colon\bN\to\bN$ and its
dual, the hull operator, which associates with each $\mu\in\bN$ a
measurable subset $[\mu]$ of $\BX$.  These are purely deterministic
concepts.  In Sections 3-7 we consider a Poisson process $\eta$ on
$\BX$, that is, a point process on $\BX$ (a random element of $\bN$)
with independent and Poisson distributed increments
\cite{LastPenrose17}.  In Section~\ref{secstopping} we show that,
given $\partial\eta$, the conditional distribution of $\eta$
restricted to the hull $[\eta]$ of $\eta$ is that of a Poisson process
with an appropriately trimmed intensity measure. This spatial strong
Markov property, which goes back to \cite{Zuyev99} and
\cite{osss21}, is crucial for our approach. In
Section~\ref{secestimator} we introduce Poisson hull estimators as
conditional expectations of a linear statistic of $\eta$ given
$\partial\eta$. Thanks to the spatial strong Markov property, the estimation
error turns out to be a Kabanov--Skorohod integral \cite{Last16}.  The
variance of our estimators is discussed in
Section~\ref{sec:vari-poiss-hull},  while Section~\ref{secestimatorbi}
presents an expression
for the variance of conditional expectations of higher order symmetric
statistics. The latter results also yield some new identities for
random polytopes. In Section~\ref{sec:centr-limit-theor} we discuss
the normal approximation of our estimators. Here we rely on the recent
company paper \cite{las:mol:sch20}, elaborating limit theorems for
Kabanov--Skorohod integrals.

Section~\ref{sec:poiss-proc-funct} considers a fairly generic setting
of Poisson processes on function spaces. In the setting of
\cite{ReissWahl19} we derive the rate in the normal approximation
under very general conditions. We also show how to embed the convex
hull estimation of \cite{BaldinReiss16} in this functional
setting. Note that the normal approximation in this case was obtained
by \cite{gry19}. A further example concerns Poisson processes on the
family of hyperplanes and the related Poisson polytopes that yield
unbiased estimators of the mean width of a convex body.

\section{The hull operator}
\label{sec:hull-operator}

\subsection{Basic properties}
\label{sec:basic-properties}

Consider a Borel space $(\BX,\cX)$, see \cite{kal17} and
\cite{LastPenrose17}. We fix a {\em localising ring}
$\cX_0\subset\cX$, see \cite{kal17}.  This is a ring with the
following two properties.  First, if $B\in\cX_0$ and $C\in\cX$, then
$B\cap C\in\cX_0$.  Second, there exists a sequence $B_n\in\cX_0$,
$n\in\N$, increasing to $\BX$ such that each set from $\cX_0$ is of the
form $C\cap B_n$ for some $C\in\cX$ and some $n\in\N$.  Given a
$\sigma$-finite measure $\lambda$ on $(\BX,\cX)$ it is, for instance,
possible to choose the $B_n$ such that $\lambda(B_n)<\infty$ for each
$n\in\N$ and then to take $\cX_0$ as the sets $B\cap B_n$ for
$B\in\cX$ and $n\in\N$. A measure $\nu$ is said to be {\em locally
  finite}, if it is finite on $\cX_0$.

Let $\bN(\BX)\equiv \bN$ denote the space of all measures $\mu$ on
$\BX$ which are integer-valued on $\cX_0$.  We equip $\bN$ with the
smallest $\sigma$-field $\cN$ making the mappings $\mu\mapsto\mu(B)$
for each $B\in\cX$ measurable.  We write
$\supp\mu:=\{x\in\BX:\mu(\{x\})>0\}$ for the support of
$\mu\in\bN$. For $x\in\BX$, we write $x\in\mu$ instead of
$x\in\supp\mu$. By $\delta_x$ we denote the Dirac measure at
$x\in\BX$.  The restriction of a measure $\nu$ on $\BX$ to a set
$B\in\cX$ is denoted by $\nu_B:=\nu(\cdot\cap B)$.  For two measures
$\nu$ and $\nu'$ on $\BX$, we write $\nu'\le\nu$ if $\nu'(B)\le\nu(B)$
for each $B\in\cX$.

Consider a measurable mapping $\mu\mapsto \partial\mu$ from $\bN$ to
$\bN$ that satisfies the following properties:
\begin{enumerate}
\item[(H1)] (thinning) $\partial\mu\leq \mu$;
\item[(H2)] (additivity)  for all $\mu\in\bN$ and $x\in\partial\mu$, we have 
  \begin{equation}
    \label{eq:1a}
    \partial(\mu+\delta_x)=\partial\mu + \delta_x;
  \end{equation}
\item[(H3)] (idempotency) for all $\mu,\mu'\in\bN$ such that $\mu'\leq \mu-\partial\mu$,
  we have
  \begin{equation}
    \label{eq:1b}
    \partial(\partial\mu+\mu')=\partial\mu ; 
  \end{equation}
\item[(H4)] (consistency) if $\mu,\mu'\in\bN$ satisfy $\mu'\leq\mu$ and
  $\partial\mu=\partial\mu'$, then
  $\partial(\mu+\psi)=\partial(\mu'+\psi)$ for all $\psi\in\bN$.
\end{enumerate}

A measurable mapping $\partial:\bN\to\bN$ satisfying (H1)--(H4)
  is called a \emph{generator}.
The thinning and idempotency properties imply that $\partial\mu$ is
the minimum of $\{\mu'\in\bN:\mu'\leq\mu,
\partial\mu'=\partial\mu\}$. Indeed, if $\mu'\leq\mu$ and
$\partial\mu'=\partial\mu$, then $\partial\mu=\partial\mu'\leq \mu'$.
It will be shown in the proof of Lemma~\ref{lemma:generator-hull} that
$\partial\mu$ retains the multiplicities of its points, that is, if
$x\in\partial\mu$ is a multiple point for $\mu$, then $\partial\mu$
has the same multiplicity at $x$.

The following examples illustrate the rather abstract definition of a
generator.

\begin{example} \label{ex:ex-basic}
  Suppose that $\BX=\R^d$ and that $\cX_0$ is the system of all
  bounded Borel sets. For $\mu\in\bN$ let $\partial\mu$ be the
  restriction of $\mu$ to the vertices of the convex hull
  $\conv(\supp\mu)$. It is easy to check that this mapping is a
  generator. This example will be further discussed and generalised in
  Example~\ref{ex:chull}.
\end{example}

\begin{example}
  \label{ex:ex-basic2}
  Let $\BX$ and $\cX_0$ be as in Example~\ref{ex:ex-basic}.  For
  $\mu\in\bN$, let $\partial\mu$ be the measure supported by the
  points of $\mu$ nearest to the origin retaining their
  multiplicity. 
\end{example}

\begin{lemma}
  \label{lemma:mid-way}
  Condition (H3) is equivalent to the combination of two conditions:
  \begin{enumerate}
  \item [(H3a)] $\partial(\partial\mu)=\partial\mu$ for all $\mu\in\bN$;
  \item [(H3b)] if $\mu,\mu'\in\bN$ satisfy $\mu'\leq\mu$ and
    $\partial\mu'=\partial\mu$, then
    \begin{math}
      \partial(\mu'+\mu'')=\partial\mu
    \end{math}
    for all $\mu''\in\bN$ such that $\mu''\leq\mu-\mu'$.   
  \end{enumerate}
\end{lemma}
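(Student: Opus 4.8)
The plan is to prove the two implications separately. For the forward direction, assume (H3). To get (H3a), apply (H3) with the same $\mu$ and with $\mu'=0$ (the zero measure): since $0\le\mu-\partial\mu$ by (H1), (H3) gives $\partial(\partial\mu+0)=\partial\mu$, i.e.\ $\partial(\partial\mu)=\partial\mu$. To get (H3b), suppose $\mu'\le\mu$, $\partial\mu'=\partial\mu$, and $\mu''\le\mu-\mu'$. The idea is to write $\mu'+\mu'' = \partial\mu + (\mu'-\partial\mu+\mu'')$ and check that the second summand is admissible in (H3) applied to $\mu'$. Indeed $\partial\mu'\le\mu'$ by (H1), so $\mu'-\partial\mu'=\mu'-\partial\mu$ is a genuine element of $\bN$; moreover $\mu'-\partial\mu+\mu''\le \mu'-\partial\mu + (\mu-\mu') = \mu-\partial\mu = \mu-\partial\mu'$, which is exactly the bound $\mu'-\partial\mu'$... — here one must be slightly careful: the hypothesis of (H3) for $\mu'$ requires the added measure to be $\le \mu'-\partial\mu'$, not $\le \mu-\partial\mu'$. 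So instead I would apply (H3) directly to $\mu$: take the test measure $\nu:=\mu'-\partial\mu+\mu''$ and verify $\nu\le\mu-\partial\mu$. Since $\mu'\le\mu$ we have $\mu'-\partial\mu\le\mu-\partial\mu$, but we need the $\mu''$ term absorbed as well, using $\mu''\le\mu-\mu'$: then $\nu=(\mu'-\partial\mu)+\mu''\le(\mu'-\partial\mu)+(\mu-\mu')=\mu-\partial\mu$. Hence (H3) applies and yields $\partial(\partial\mu+\nu)=\partial\mu$, i.e.\ $\partial(\mu'+\mu'')=\partial\mu$, which is (H3b). (The subtraction $\mu'-\partial\mu$ makes sense because $\partial\mu=\partial\mu'\le\mu'$.)

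For the converse, assume (H3a) and (H3b), and let $\mu'\le\mu-\partial\mu$; I must show $\partial(\partial\mu+\mu')=\partial\mu$. The plan is to invoke (H3b) with $\partial\mu$ playing the role of ``$\mu'$'' in that statement and $\mu$ playing the role of ``$\mu$'': we have $\partial\mu\le\mu$ by (H1), and $\partial(\partial\mu)=\partial\mu$ by (H3a), so the two hypotheses $``\mu'\le\mu"$ and $``\partial\mu'=\partial\mu"$ of (H3b) are met. The remaining hypothesis of (H3b) is that the added measure — here $\mu'$ — satisfies $\mu'\le\mu-\partial\mu$, which is exactly our assumption. Therefore (H3b) gives $\partial(\partial\mu+\mu')=\partial\mu$, which is (H3).

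The only genuinely delicate point is bookkeeping with differences of counting measures: whenever I write $\mu'-\partial\mu$ or $\mu-\mu'$ I should note it is a well-defined element of $\bN$ because of the relevant $\le$ relation (from (H1) and the hypotheses), so that all the manipulations take place inside $\bN$ and the monotonicity inequalities can be added termwise. No other subtlety arises; once the admissibility of each test measure is checked, both implications are one-line applications of the hypotheses. I expect the main obstacle to be purely notational — making sure the bound required in each invocation of (H3) or (H3b) is the correct one (e.g.\ $\le\mu-\partial\mu$ versus $\le\mu'-\partial\mu'$) — rather than anything conceptual.
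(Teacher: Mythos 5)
Your proposal is correct and follows essentially the same argument as the paper: the forward direction uses $\mu'=0$ to obtain (H3a) and the decomposition $\mu'+\mu''=\partial\mu+(\mu'-\partial\mu+\mu'')$ together with the bound $\mu'-\partial\mu+\mu''\le\mu-\partial\mu$ to obtain (H3b), while the converse applies (H3b) with $\partial\mu$ in place of $\mu'$ (justified by (H3a)) and $\mu'$ in place of $\mu''$. The brief detour you flag — first trying to invoke (H3) at $\mu'$ rather than $\mu$ — is correctly self-diagnosed and resolved, and the final argument matches the paper's.
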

\begin{proof}
  Assume that (H3a) and (H3b) hold and let $\mu\in\bN$. Suppose that
  $\mu'\leq \mu-\partial\mu$. Since
  $\partial(\partial\mu)=\partial\mu$, we can apply (H3b) with
  $\partial\mu$ instead of $\mu'$ and $\mu'$ instead of $\mu''$ to
  obtain \eqref{eq:1b}

  Conversely, (H3) with $\mu'=0$ yields (H3a). Suppose that
  $\mu'\leq\mu$, $\partial\mu'=\partial\mu$ and $\mu''\leq \mu-\mu'$. 
  Since
  \begin{displaymath}
    \partial(\mu'+\mu'')=\partial(\partial\mu+\mu'-\partial\mu+\mu'')
  \end{displaymath}
  and $\mu'-\partial\mu+\mu''\leq \mu-\partial\mu$, (H3b) follows from
  (H3). 
\end{proof}

For a given generator $\partial$, 
define a measurable function $H\colon\BX\times\bN\to\R$ by
\begin{equation}
  \label{eq:H}
  H_x(\mu):= \I\big\{\partial(\mu+\delta_x)\neq \partial\mu\big\}.
\end{equation}
Further write $\bH_x(\mu):=1-H_x(\mu)$. By (H4), $\bH_x(\mu)=1$
implies that $\bH_x(\mu+\delta_y)=1$ for all $y\in\BX$. 

\begin{lemma}
  \label{lemma:minus-point}
  For all $\mu\in\bN$ and all $x\in\mu$, we have 
  $H_x(\mu-\delta_x)=H_x(\mu)$. 
\end{lemma}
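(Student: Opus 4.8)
The plan is to prove the two implications $H_x(\mu)=0\Rightarrow H_x(\mu-\delta_x)=0$ and $H_x(\mu-\delta_x)=0\Rightarrow H_x(\mu)=0$; since $H_x$ is $\{0,1\}$-valued, together they give the assertion. Note first that, because $x\in\mu$, the measure $\mu-\delta_x$ belongs to $\bN$ and $(\mu-\delta_x)+\delta_x=\mu$, so that $\bH_x(\mu-\delta_x)=\I\{\partial(\mu-\delta_x)=\partial\mu\}$.

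The implication $H_x(\mu-\delta_x)=0\Rightarrow H_x(\mu)=0$ is immediate from the consequence of (H4) noted just after \eqref{eq:H}: if $\bH_x(\mu-\delta_x)=1$, then $\bH_x\big((\mu-\delta_x)+\delta_y\big)=1$ for every $y\in\BX$, and taking $y=x$ gives $\bH_x(\mu)=1$. For the converse, assume $\bH_x(\mu)=1$, i.e.\ $\partial(\mu+\delta_x)=\partial\mu$. I would first observe that $x\notin\partial\mu$: otherwise (H2) would give $\partial(\mu+\delta_x)=\partial\mu+\delta_x$, whence $\partial\mu=\partial\mu+\delta_x$, which is impossible since $\partial\mu$ is locally finite. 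Thus $x\notin\partial\mu$ while $x\in\mu$, so $(\mu-\partial\mu)(\{x\})=\mu(\{x\})\ge 1$, and therefore $\mu':=\mu-\partial\mu-\delta_x$ is an element of $\bN$ with $\mu'\le\mu-\partial\mu$. Applying (H3) to this $\mu'$ yields $\partial(\mu-\delta_x)=\partial(\partial\mu+\mu')=\partial\mu$, that is $\bH_x(\mu-\delta_x)=1$.

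The argument is short and formal; the one place that requires a moment's thought is the reduction to the case $x\notin\partial\mu$ in the second implication, since this is exactly what makes the hypothesis $\mu'\le\mu-\partial\mu$ of (H3) available. As a by-product, the same reasoning shows that $H_x(\mu)=\I\{x\in\partial\mu\}$ for every $x\in\mu$, i.e.\ for a point of $\mu$ the indicator $H_x$ detects membership in the generator, which is the informal reading used in the introduction.
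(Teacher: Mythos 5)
Your proof is correct and follows essentially the same route as the paper: one direction is the direct consequence of (H4), and for the converse you first use (H2) to rule out $x\in\partial\mu$ and then apply (H3) with $\mu'=\mu-\partial\mu-\delta_x$, exactly as the paper does. The extra remark at the end (that $H_x(\mu)=\I\{x\in\partial\mu\}$ for $x\in\mu$) is a correct by-product, which the paper records separately as \eqref{eq:partial}.
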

\begin{proof}
  Assume that $H_x(\mu-\delta_x)=0$, that is,
  $\partial(\mu-\delta_x)=\partial\mu$. By (H4), $H_x(\mu)=0$.

  Now let $H_x(\mu)=0$, that is,
  $\partial(\mu+\delta_x)= \partial\mu$.  Hence, $x\notin\partial\mu$,
  since otherwise \eqref{eq:1a} yields a contradiction by evaluating
  the values of measures at $\{x\}$.  Furthermore, \eqref{eq:1b} with
  $\mu'=(\mu-\partial\mu-\delta_x)$ implies that
  $\partial(\mu-\delta_x)=\partial\mu$, that is,
  $H_x(\mu-\delta_x)=0$.
\end{proof}

For each $\mu\in\bN$, define the \emph{hull operator}
$\mu\mapsto[\mu]$ as 
\begin{equation}
  \label{eq:nu}
  [\mu]:=\big\{x\in\BX:\partial(\mu+\delta_x)=\partial\mu\big\}
  =\big\{x\in\BX:H_x(\mu)=0\big\}.
\end{equation}
By (H4) we have $[\mu]\subset[\mu']$ if $\mu\le\mu'$.

\begin{lemma}
  \label{lemma:generator-hull}
  Let $\partial$ be a generator. Then, for all $\mu\in\bN$,
  \begin{align}    \label{elZr}
    [\mu]&=[\partial\mu],\\
    \label{eq:partial}
    \supp\partial\mu&=\big\{x\in\mu:\partial(\mu-\delta_x)\ne \partial\mu\big\},\\
    \label{eq:5}
    \partial\mu&=\mu_{[\mu]^c}.
  \end{align}
\end{lemma}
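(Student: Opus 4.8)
The three identities should be attacked in order, each feeding into the next. For \eqref{elZr}, recall that by definition $x\in[\mu]$ means $\partial(\mu+\delta_x)=\partial\mu$, and $x\in[\partial\mu]$ means $\partial(\partial\mu+\delta_x)=\partial(\partial\mu)$. By (H3a) (equivalently, (H3) with $\mu'=0$) we have $\partial(\partial\mu)=\partial\mu$, so the right-hand sides agree; it remains to match the left-hand sides. The key observation is that $\partial\mu\le\mu$ by (H1) and $\partial\mu$ and $\mu$ have the same generator image, namely $\partial(\partial\mu)=\partial\mu$ by (H3a); hence (H4) applied with $\mu'=\partial\mu$, $\mu=\mu$ and $\psi=\delta_x$ gives $\partial(\mu+\delta_x)=\partial(\partial\mu+\delta_x)$. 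Thus the two defining conditions coincide and \eqref{elZr} follows.

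For \eqref{eq:partial}, I would prove the two inclusions separately. If $x\in\supp\partial\mu$, then $x\in\partial(\mu-\delta_x+\delta_x)$; were $\partial(\mu-\delta_x)=\partial\mu$, Lemma~\ref{lemma:minus-point} (or directly (H4)) would give $\partial((\mu-\delta_x)+\delta_x)=\partial((\mu-\delta_x)+\delta_x)$ trivially, so instead I argue via multiplicities: if $\partial(\mu-\delta_x)=\partial\mu$ then applying (H2) at the point $x\in\partial(\mu-\delta_x)$ yields $\partial\mu=\partial((\mu-\delta_x)+\delta_x)=\partial(\mu-\delta_x)+\delta_x=\partial\mu+\delta_x$, a contradiction evaluated at $\{x\}$ — provided $x\in\partial(\mu-\delta_x)$, which we get from $\partial(\mu-\delta_x)=\partial\mu\ni x$. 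So the left set is contained in the right. Conversely, if $x\in\mu$ with $\partial(\mu-\delta_x)\ne\partial\mu$, then $H_x(\mu-\delta_x)=1$, and by Lemma~\ref{lemma:minus-point} $H_x(\mu)=1$, i.e.\ $x\notin[\mu]$; but if $x\notin\partial\mu$, then writing $\mu'=\mu-\delta_x\le\mu$ I would show $\partial\mu'=\partial\mu$ (using idempotency (H3) with the decomposition $\mu=\partial\mu+(\mu-\partial\mu)$ and the fact that removing a non-generator point keeps us in $\mu-\partial\mu$), contradicting $\partial(\mu-\delta_x)\ne\partial\mu$. Hence $x\in\supp\partial\mu$, giving the reverse inclusion. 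This is also the natural place to record that $\partial\mu$ retains multiplicities: iterating the argument on $\mu-\delta_x$ shows each occurrence of a generator point survives.

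Finally, \eqref{eq:5} should follow by combining the first two. We must show $\partial\mu=\mu_{[\mu]^c}$, i.e.\ that as measures on $\BX$, $\partial\mu$ agrees with $\mu$ on $[\mu]^c$ and is null on $[\mu]$. For the support statement: by \eqref{eq:nu}, $x\notin[\mu]$ iff $H_x(\mu)=1$ iff $\partial(\mu+\delta_x)\ne\partial\mu$; I would like to relate this to $\partial(\mu-\delta_x)\ne\partial\mu$ from \eqref{eq:partial}. For $x\in\mu$, Lemma~\ref{lemma:minus-point} gives $H_x(\mu-\delta_x)=H_x(\mu)$, so $x\notin[\mu]$ iff $\partial((\mu-\delta_x))\ne\partial(\mu-\delta_x+\delta_x)$ — matching the condition in \eqref{eq:partial} — hence $\supp\partial\mu=\{x\in\mu: x\notin[\mu]\}=\supp\mu\cap[\mu]^c$. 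For $x\notin\mu$, such $x$ contributes nothing to either side. The multiplicity claim then upgrades this support equality to an equality of measures: since both $\partial\mu\le\mu$ (by (H1)) and $\mu_{[\mu]^c}\le\mu$ have the same support, and $\partial\mu$ carries the full $\mu$-multiplicity at each of its points (established above), we get $\partial\mu=\mu_{\supp\partial\mu}=\mu_{[\mu]^c}$.

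\textbf{Main obstacle.} The delicate point is the multiplicity bookkeeping: conditions (H1)--(H4) are phrased for single Dirac perturbations, and one must carefully iterate the additivity (H2) and consistency (H4) arguments to control repeated points, making sure that adding back $\delta_x$ one copy at a time stays within the hypotheses of (H2) (which requires $x\in\partial(\cdot)$ at each stage) and of (H3) (which requires the perturbation to be dominated by $\mu-\partial\mu$). Getting \eqref{eq:partial} cleanly — in particular the direction showing that a point contributing to $\partial\mu$ must be ``detected'' when removed — is where the definitions are used most sharply, and the rest is then essentially bookkeeping combining \eqref{elZr}, \eqref{eq:partial}, and Lemma~\ref{lemma:minus-point}.
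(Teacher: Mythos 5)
Your plan follows the paper's structure closely: \eqref{elZr} via $\partial(\partial\mu)=\partial\mu$ (H3) and (H4), \eqref{eq:partial} by (H2) in one direction and (H3) with $\mu'=\mu-\partial\mu-\delta_x$ in the other, and \eqref{eq:5} by matching supports and then invoking a multiplicity claim. The argument for \eqref{elZr} is exactly the paper's, and for \eqref{eq:partial} your first inclusion (apply (H2) at $x\in\partial(\mu-\delta_x)$) is a harmless variant of the paper's (which converts via Lemma~\ref{lemma:minus-point} to $\partial(\mu+\delta_x)\ne\partial\mu$ and then uses (H2) on $\mu$); the converse is the same.

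The genuine gap is the multiplicity claim. For \eqref{eq:5} you need that $\partial\mu(\{x\})=\mu(\{x\})$ for every $x\in\supp\partial\mu$, and you dismiss this with ``iterating the argument on $\mu-\delta_x$ shows each occurrence of a generator point survives.'' No such iteration is actually spelled out, and it is not clear it works: after removing one copy of $\delta_x$ you do not know, without further argument, that $x$ remains in $\supp\partial(\mu-\delta_x)$, nor how $\partial(\mu-\delta_x)$ relates to $\partial\mu$, so you cannot simply ``re-run'' \eqref{eq:partial}. The paper instead gives a direct two-line contradiction that you should supply: if $1\le\partial\mu(\{x\})<\mu(\{x\})$ then $\delta_x\le\mu-\partial\mu$, so (H3) with $\mu'=\delta_x$ gives $\partial(\partial\mu+\delta_x)=\partial\mu$; on the other hand $x\in\partial\mu=\partial(\partial\mu)$, so (H2) applied to the measure $\partial\mu$ gives $\partial(\partial\mu+\delta_x)=\partial\mu+\delta_x$, a contradiction. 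With this inserted, your route to \eqref{eq:5} (deriving $\supp\partial\mu=\supp\mu\cap[\mu]^c$ from \eqref{eq:partial} and Lemma~\ref{lemma:minus-point}, then upgrading by multiplicities) is a valid and slightly more economical alternative to the paper's case analysis.
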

\begin{proof}
  Assume that $x\in\BX$ satisfies
  $\partial(\partial\mu+\delta_x)=\partial\mu$. Since
  $\partial(\partial\mu)=\partial\mu$, (H4) yields that
  \begin{equation}
    \label{eq:5a}
    \partial(\partial\mu+\delta_x)=\partial(\mu+\delta_x), \quad x\in\BX.
  \end{equation}
  Hence, $\partial(\mu+\delta_x)=\partial\mu$, meaning that
  $x\in[\mu]$.  Assume, conversely, that $x\in[\mu]$. 
  By \eqref{eq:5a}, $x$
  belongs to the right-hand side of \eqref{elZr}.

  To prove \eqref{eq:partial}, take $x\in\partial\mu$ and assume that
  $\partial(\mu-\delta_x)= \partial\mu$. By
  Lemma~\ref{lemma:minus-point}, this is equivalent to
  $\partial\mu=\partial(\mu+\delta_x)$, which is impossible if
  $x\in\partial\mu$ by \eqref{eq:1a}.  Assume, conversely, that
  $\partial(\mu-\delta_x)\ne\partial\mu$ for some $x\in\mu$. We need
  to show that $x\in\partial\mu$.  However, if $x\notin\partial\mu$
  then (H3) would imply
  $\partial(\partial\mu+(\mu-\partial\mu-\delta_x))=\partial\mu$, 
  which is a contradiction.

  To prove \eqref{eq:5}, we first show that 
  $\partial\mu(\{x\})=\mu(\{x\})$ for all $x\in\partial\mu$, that is,
  the generator retains the multiplicities of its points from
  $\mu$. This follows from (H2) and (H3). Indeed, assume that $1\leq
  \partial\mu(\{x\})<\mu(\{x\})$. Then
  $\mu':=\delta_x\leq \mu-\partial\mu$, so that
  $\partial(\partial\mu+\delta_x)=\partial\mu$ by \eqref{eq:1b}. Since
  $x\in\partial\mu$ and $\partial\partial\mu=\partial\mu$,
  \eqref{eq:1a} yields that
  $\partial(\partial\mu+\delta_x)=\partial\partial\mu+\delta_x=\partial\mu+\delta_x$,
  which is a contradiction.
  
  If $x\in\partial\mu$, then $\partial\mu\neq \partial(\mu+\delta_x)$
  by (H2), hence $x\notin[\mu]$.  If $x\in\mu$ and
  $x\notin\partial\mu$, then
  $\partial(\partial\mu+\delta_x)=\partial\mu$ by (H3). By
  \eqref{elZr}, $x\in[\mu]$, so that \eqref{eq:5} holds. 
\end{proof}

The measurability of the generator
implies that $[\mu]$ is a measurable subset of $\BX$. Recall that the
map $\mu\mapsto[\mu]$ is called graph measurable if
$\{(\mu,x):x\in[\mu]\}$ is a measurable subset of $\bN\times\BX$
equipped with the product $\sigma$-algebra $\cN\otimes\cX$,
equivalently, the indicator function $(\mu,x)\mapsto \I\{x\in[\mu]\}$
is jointly measurable.

\begin{lemma}
  \label{lemma:graph}
  Let $\partial:\bN\to\bN$ be a map satisfying (H1)--(H4). Then the
  measurability of $\partial$ is equivalent to the graph measurability
  of the map  $\mu\mapsto[\mu]$.  
\end{lemma}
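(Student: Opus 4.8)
The plan is to prove the two implications separately, exploiting the identity \eqref{eq:5}, namely $\partial\mu=\mu_{[\mu]^c}$, which ties the hull to the generator deterministically. For the easy direction, suppose $\partial$ is measurable. Then for each fixed $x\in\BX$ the map $\mu\mapsto H_x(\mu)=\I\{\partial(\mu+\delta_x)\neq\partial\mu\}$ is $\cN$-measurable, since $\mu\mapsto\mu+\delta_x$ is measurable from $\bN$ to $\bN$ and $\partial$ is measurable, and the ``diagonal'' set $\{(\nu,\nu'):\nu\neq\nu'\}\subset\bN\times\bN$ is measurable because $\cN$ is countably generated (by the maps $\nu\mapsto\nu(B_n\cap C)$ for a countable generating subfamily). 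This only gives separate measurability in $\mu$ for each $x$, so the real work is to upgrade to joint measurability of $(\mu,x)\mapsto H_x(\mu)$, equivalently of $(\mu,x)\mapsto\I\{x\in[\mu]\}$.

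For the joint measurability, the key observation is \eqref{eq:5}: $x\in[\mu]^c$ if and only if $x\in\supp\partial\mu$. So $\I\{x\in[\mu]\}=1-\I\{x\in\supp\partial\mu\}=1-\I\{(\partial\mu)(\{x\})>0\}$. Thus it suffices to show that $(\nu,x)\mapsto\I\{x\in\supp\nu\}$ is jointly measurable on $\bN\times\BX$, and then compose with the (measurable) map $(\mu,x)\mapsto(\partial\mu,x)$. Joint measurability of the support indicator is a standard fact for point processes on a Borel space with a localising ring (see \cite{kal17, LastPenrose17}): one writes $\nu=\sum_i\delta_{x_i}$ via a measurable enumeration of atoms, or more directly approximates using a countable generating field — for a sequence of finite partitions $\{B_{n,1},\dots,B_{n,k_n}\}$ of $B_n$ with diameters shrinking to zero (available since $(\BX,\cX)$ is Borel, hence isomorphic to a Borel subset of $[0,1]$), one has $x\in\supp\nu$ iff $\nu(B_{n,j(n,x)})\ge 1$ for all large $n$, where $B_{n,j(n,x)}$ is the partition cell containing $x$; each such condition is jointly measurable in $(\nu,x)$, and the support indicator is the $\limsup$ (in fact eventual value) of these, hence jointly measurable.

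For the converse, assume $\mu\mapsto[\mu]$ is graph measurable, i.e. $(\mu,x)\mapsto\I\{x\in[\mu]\}$ is $\cN\otimes\cX$-measurable. By \eqref{eq:5} we have $\partial\mu=\mu_{[\mu]^c}=\mu(\,\cdot\,\cap[\mu]^c)$, so for every $B\in\cX$,
\begin{equation*}
  (\partial\mu)(B)=\int_B \I\{x\in[\mu]^c\}\,\mu(dx)
  =\int \I\{x\in B\}\big(1-\I\{x\in[\mu]\}\big)\,\mu(dx).
\end{equation*}
The integrand $(x,\mu)\mapsto\I\{x\in B\}(1-\I\{x\in[\mu]\})$ is $\cN\otimes\cX$-measurable by hypothesis, and the map $(\mu,g)\mapsto\int g(x)\,\mu(dx)$ is measurable in $\mu$ for fixed jointly measurable nonnegative kernel $g$ (a monotone class / Fubini-type argument over $\bN$, again standard for point-process spaces). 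Hence $\mu\mapsto(\partial\mu)(B)$ is $\cN$-measurable for every $B\in\cX$, which by definition of the $\sigma$-field $\cN$ on the target copy of $\bN$ is exactly measurability of $\partial\colon\bN\to\bN$.

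The main obstacle is the joint-measurability lemma for the support indicator $(\nu,x)\mapsto\I\{x\in\supp\nu\}$ and, relatedly, the measurability of $\mu\mapsto\int g\,d\mu$ for jointly measurable $g$; these are the places where one genuinely uses that $(\BX,\cX)$ is a Borel space with a localising ring, rather than just formal manipulation. Everything else is bookkeeping with \eqref{eq:5} and the definition of $\cN$. I would state the support-indicator fact as a short auxiliary lemma (or cite \cite{kal17}) and keep the two main implications as clean as above.
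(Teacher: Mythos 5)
Your converse direction is essentially the paper's argument: use $\partial\mu=\mu_{[\mu]^c}$ to write $(\partial\mu)(A)$ as an integral of a jointly measurable indicator against $\mu$, then invoke a Kallenberg-style measurability lemma. That part is fine.

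The forward direction, however, contains a genuine error. You assert that by \eqref{eq:5} one has $x\in[\mu]^c$ if and only if $x\in\supp\partial\mu$. This is false. Equation \eqref{eq:5} gives $\partial\mu=\mu_{[\mu]^c}$, from which $\supp\partial\mu=\supp\mu\cap[\mu]^c$; this is a strict subset of $[\mu]^c$ as soon as $[\mu]^c$ contains points not charged by $\mu$. In the convex-hull example, $[\mu]^c$ is the complement of the convex hull together with its vertices — an uncountable set — whereas $\supp\partial\mu$ is the finite set of vertices. So $\I\{x\in[\mu]\}\ne 1-\I\{x\in\supp\partial\mu\}$, and the reduction to joint measurability of $(\nu,x)\mapsto\I\{x\in\supp\nu\}$ does not go through as written. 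The identity you actually want is $x\in[\mu]$ iff $x\notin\supp\partial(\mu+\delta_x)$, i.e.\ $\I\{x\in[\mu]\}=1-\I\{\partial(\mu+\delta_x)(\{x\})>0\}$; this follows from \eqref{eq:partial} applied to $\mu+\delta_x$. With that corrected identity your route could be repaired, though it still requires your auxiliary joint-measurability claim for the support indicator. The paper's own forward argument is more direct and avoids the support indicator entirely: it writes the graph as
\begin{equation*}
\big\{(\mu,x):x\in[\mu]\big\}=\big\{(\mu,x):\partial\mu=\partial(\mu+\delta_x)\big\},
\end{equation*}
observes that $(\mu,x)\mapsto\big(\partial\mu,\partial(\mu+\delta_x)\big)$ is measurable (composition of measurable maps), and that the diagonal in $\bN\times\bN$ is measurable because $\bN$ is a Borel space. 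This is cleaner and sidesteps the step you got wrong.
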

\begin{proof}
  Assume that $\partial$ is measurable.  By definition,
  \begin{math}
    \big\{(\mu,x):x\in[\mu]\big\}
    =\big\{(\mu,x):\partial\mu=\partial(\mu+\delta_x)\big\}.
  \end{math}
  Note that the map $(\mu,x)\mapsto\partial(\mu+\delta_x)$ from
  $\bN\times\BX\to\bN$ is measurable with respect to the product
  $\sigma$-algebra on $\bN\times\BX$, being a composition of two
  measurable maps. Furthermore, the diagonal set
  $\{(\mu,\mu')\in\bN^2:\mu=\mu'\}$ is measurable, since $\bN$ is a
  Borel space, see \cite[Theorem~1.5]{kal17}. This implies the result.

  Now assume that $\mu\mapsto[\mu]$ is graph measurable.
  For each $A\in\cX$, \eqref{eq:5} yields that
  \begin{displaymath}
    (\partial\mu)(A)=\mu\big([\mu]^c\cap A\big)
    =\int\I\{x\in A\}\I\{x\notin[\mu]\}\,\mu(dx).
  \end{displaymath}
  Since the integrand is jointly measurable, Lemma~1.15(i) in
  \cite{kal17} yields that $(\partial\mu)(A)$ is a measurable function
  of $\mu$.
\end{proof}

\begin{lemma}
  \label{lemma:stoppingset}
  Let $\partial$ be a generator, and let $\mu,\psi\in\bN$. Then
  $\mu_{[\mu]^c}=\psi$ if and only if $\mu_{[\psi]^c}=\psi$. In this
  case $[\mu]=[\psi]$.
\end{lemma}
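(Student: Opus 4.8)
The plan is to exploit the characterization in \eqref{eq:5}, which says that for any $\mu\in\bN$ the generator recovers the ``boundary'' as $\partial\mu=\mu_{[\mu]^c}$, together with \eqref{elZr} ($[\mu]=[\partial\mu]$) and the monotonicity $[\mu]\subset[\mu']$ for $\mu\le\mu'$ granted by (H4). First I would record the trivial direction: if $\mu_{[\mu]^c}=\psi$, then $\psi=\partial\mu$ by \eqref{eq:5}, and then \eqref{elZr} gives $[\psi]=[\partial\mu]=[\mu]$; substituting back yields $\mu_{[\psi]^c}=\mu_{[\mu]^c}=\psi$, and we have also obtained $[\mu]=[\psi]$ along the way.

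For the converse, assume $\mu_{[\psi]^c}=\psi$. The key observation is that $\psi\le\mu$ (it is a restriction of $\mu$), so by the monotonicity of the hull operator $[\psi]\subset[\mu]$, hence $[\mu]^c\subset[\psi]^c$. Restricting $\mu$ to the smaller set therefore gives $\mu_{[\mu]^c}\le\mu_{[\psi]^c}=\psi$. To get the reverse inequality $\psi\le\mu_{[\mu]^c}$, I would argue that $\psi$ is concentrated on $[\mu]^c$: since $\psi=\mu_{[\psi]^c}$ is supported in $[\psi]^c$ and since, using $\psi\le\mu$ and (H4)-type reasoning, one should be able to show $[\psi]\supset[\mu]$ as well, forcing $[\psi]=[\mu]$. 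Concretely, the natural route is to first establish $\partial\psi=\psi$: indeed $\psi=\mu_{[\psi]^c}$, and applying \eqref{eq:5} to $\psi$ itself gives $\partial\psi=\psi_{[\psi]^c}$, which equals $\psi$ once we know $[\psi]\supset[\psi]$ trivially makes $\psi$ live on $[\psi]^c$ --- so $\psi_{[\psi]^c}=\psi$ and thus $\partial\psi=\psi$. Then from $\psi\le\mu$, $\partial\psi=\psi\le\mu-\partial\mu$? --- this needs care, so instead I would use (H3) in the form: since $\psi=\mu_{[\psi]^c}\le\mu$ and $\psi$ is $\partial$-idempotent, apply (H4) with the pair $\partial\mu\le\mu$ and $\partial\psi=\psi$, checking $\partial(\partial\mu)=\partial\mu=$ ... to conclude $\partial\mu=\partial\psi=\psi$. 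Once $\partial\mu=\psi$ is in hand, \eqref{eq:5} gives $\mu_{[\mu]^c}=\partial\mu=\psi$ and \eqref{elZr} gives $[\mu]=[\partial\mu]=[\psi]$, completing the proof.

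The main obstacle will be the converse direction, specifically pinning down that $\mu_{[\psi]^c}=\psi$ forces $\partial\mu=\psi$ (not merely $\partial\mu\le\psi$): one must rule out the possibility that $\psi$ strictly contains $\partial\mu$ while still satisfying the fixed-point equation. I expect the cleanest way is to show both $[\mu]=[\psi]$ and $\partial\mu=\partial\psi$, then invoke $\partial\psi=\psi_{[\psi]^c}=\mu_{[\psi]^c}=\psi$ (the middle equality because $\psi\le\mu$ and $\psi$ vanishes on $[\psi]$). Verifying $[\mu]=[\psi]$ amounts to combining $[\psi]\subset[\mu]$ (monotonicity) with the reverse inclusion, and the reverse inclusion is exactly where (H3)/(H4) and the structure of $\partial$ must be used rather than formal measure manipulation; this is the step I would spend the most effort on.
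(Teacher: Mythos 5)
Your forward direction matches the paper exactly.  In the converse direction you have assembled all the right ingredients: from $\psi=\mu_{[\psi]^c}\le\mu$ and monotonicity of the hull you get $[\psi]\subset[\mu]$, hence $\partial\mu=\mu_{[\mu]^c}\le\mu_{[\psi]^c}=\psi$; and applying \eqref{eq:5} to $\psi$ together with the observation that $\psi=\mu_{[\psi]^c}$ lives on $[\psi]^c$ gives $\partial\psi=\psi_{[\psi]^c}=\psi$.  What remains is to pass from $\partial\mu\le\psi\le\mu$ and $\partial\psi=\psi$ to $\partial\mu=\psi$, and this is exactly where your proposal has a genuine gap.  You float two candidate moves and neither lands.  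The inequality ``$\partial\psi=\psi\le\mu-\partial\mu$'' is false: in fact $\psi\ge\partial\mu$, so it fails whenever $\partial\mu\ne 0$.  And the appeal to (H4) is a dead end here: (H4) transfers equalities of generators under adding a common measure, which is not the operation you need.

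The correct tool is (H3), applied not to $\psi$ itself but to $\psi':=\psi-\partial\mu$.  Since $\partial\mu\le\psi\le\mu$, you have $0\le\psi'\le\mu-\partial\mu$, so (H3) yields $\partial\psi=\partial(\partial\mu+\psi')=\partial\mu$.  Combined with $\partial\psi=\psi$ this gives $\psi=\partial\mu$, and then $[\mu]=[\partial\mu]=[\psi]$ by \eqref{elZr}.  Note also that your suggested alternative route --- first establishing $[\mu]\subset[\psi]$ and then deducing $\partial\mu=\partial\psi$ --- is unnecessary detour: the paper goes directly to $\psi=\partial\mu$ via (H3), after which $[\mu]=[\psi]$ falls out for free from \eqref{elZr}.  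With the single (H3) step fixed, your argument coincides with the paper's.
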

\begin{proof}
  Recall from Lemma~\ref{lemma:generator-hull} that
  $\mu_{[\mu]^c}=\partial\mu$ and assume first that
  $\partial\mu=\psi$.  By \eqref{elZr},
  \begin{align*}
    \mu_{[\psi]^c}=\mu_{[\partial\mu]^c}=\mu_{[\mu]^c}=\partial\mu=\psi.
  \end{align*}
  To prove the converse, assume that $\mu_{[\psi]^c}=\psi$. Then
  $\psi\leq\mu$ and it follows from (H4) that
  $[\psi]\subset[\mu]$. Therefore, $\partial\mu=\mu_{[\mu]^c}\le \mu_{[\psi]^c}=\psi$.
  Since $\psi\le \mu$, we have $\psi=\partial\mu+\psi'$ for some
  $\psi'\le \mu-\partial\mu$.  By (H3), $\partial\psi=\partial\mu$.
  Since $\psi$ is supported by $[\psi]^c$, we have 
  $\partial\psi=\psi_{[\psi]^c}=\mu_{[\psi]^c}=\psi$. Thus,
  $\psi=\partial\mu$. 
  By \eqref{elZr}, $[\mu]=[\partial\mu]=[\psi]$.
\end{proof}

\begin{lemma}
  \label{lemma:T}
  For all $x,y\in\BX$ and $\mu\in\bN$, the following statements are
  equivalent:
  \begin{enumerate}
  \item [(i)]
    $\partial(\mu+\delta_x+\delta_y)=\partial(\mu+\delta_x)=\partial(\mu+\delta_y)$,
  \item [(ii)] $\partial(\mu+\delta_x+\delta_y)=\partial(\mu+\delta_x)=\partial\mu$,
  \item [(iii)] $\partial(\mu+\delta_x)=\partial(\mu+\delta_y)=\partial\mu$.
  \end{enumerate}
\end{lemma}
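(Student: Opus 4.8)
The plan is to establish the three-way equivalence by proving the cyclic chain of implications (i) $\Rightarrow$ (ii) $\Rightarrow$ (iii) $\Rightarrow$ (i), at each step exploiting the defining properties (H1)--(H4) together with the characterisation $[\mu]=\{x: H_x(\mu)=0\}$ and Lemma~\ref{lemma:minus-point}. It is convenient to note at the outset the symmetry: every statement is invariant under swapping $x$ and $y$, so in each implication I am free to relabel.

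First I would treat (iii) $\Rightarrow$ (i), which I expect to be the cleanest. Assuming $\partial(\mu+\delta_x)=\partial\mu$ and $\partial(\mu+\delta_y)=\partial\mu$ means $x\in[\mu]$ and $y\in[\mu]$. From $\partial(\mu+\delta_x)=\partial\mu=\partial(\mu+\delta_y)$ and the fact that $\partial\mu\le\mu\le\mu+\delta_y$, I would invoke (H4) with the pair $\mu\le\mu+\delta_y$ (both having generator $\partial\mu$) and $\psi=\delta_x$ to get $\partial(\mu+\delta_x)=\partial(\mu+\delta_y+\delta_x)$; combined with (iii) this yields $\partial(\mu+\delta_x+\delta_y)=\partial(\mu+\delta_x)=\partial(\mu+\delta_y)$, i.e.\ (i). For (ii) $\Rightarrow$ (iii): from (ii) I already have $\partial(\mu+\delta_x)=\partial\mu$, so I only need $\partial(\mu+\delta_y)=\partial\mu$. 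Here I would use $\partial(\mu+\delta_x+\delta_y)=\partial\mu$ and the consistency property (H4) applied to $\mu\le\mu+\delta_x$, both with generator $\partial\mu$ by (ii), adding $\psi=\delta_y$: this gives $\partial(\mu+\delta_y)=\partial(\mu+\delta_x+\delta_y)=\partial\mu$, which is what is needed.

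The step I expect to be the main obstacle is (i) $\Rightarrow$ (ii), since (i) only asserts that adding $x$ and adding $y$ both leave the generator at the common value $\partial(\mu+\delta_x)$, and one must rule out that this common value strictly exceeds $\partial\mu$. The idea is this: set $\nu:=\mu+\delta_x$, so (i) says $\partial(\nu+\delta_y)=\partial\nu=\partial(\mu+\delta_y)$. In particular $y\in[\nu]$, so $\partial\nu=\nu_{[\nu]^c}$ by \eqref{eq:5} is supported off $y$; I want to descend from $\nu$ to $\mu$ by removing $\delta_x$. Apply Lemma~\ref{lemma:minus-point} to $\nu$ at the point $x\in\nu$: $H_x(\nu-\delta_x)=H_x(\nu)$, i.e.\ $H_x(\mu)=H_x(\mu+\delta_x)$. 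If $H_x(\mu)=0$ then $x\in[\mu]$ and (ii) follows from (iii)$\Leftrightarrow$(ii) applied in the other direction (so I would reorder and prove (i)$\Rightarrow$(iii)$\Rightarrow$(ii) if that is cleaner). If instead $H_x(\mu)=1$, i.e.\ $\partial(\mu+\delta_x)\neq\partial\mu$, I must derive a contradiction from (i); the mechanism is that $\partial(\mu+\delta_y)=\partial(\mu+\delta_x)$ forces, via (H4) and idempotency (H3a), an inconsistency in the multiplicity at $x$ or $y$, exactly as in the proof of Lemma~\ref{lemma:generator-hull}. I would work out this case by comparing $\partial(\mu+\delta_y)$ evaluated on $\{x\}$ using \eqref{eq:1a}/\eqref{eq:1b}: since $x\notin\mu+\delta_y$ necessarily $x\notin\partial(\mu+\delta_y)$ unless $x$ was already a point of $\mu$, and a short case analysis on whether $x\in\mu$, paralleling Lemma~\ref{lemma:minus-point}, closes it.

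Finally I would assemble the three implications into the stated equivalence, remarking that by the $x\leftrightarrow y$ symmetry the "mixed" forms (such as $\partial(\mu+\delta_x)=\partial\mu$ together with $\partial(\mu+\delta_y)\ne\partial\mu$) cannot occur under any of (i)--(iii), which is really the content of the lemma: once two points both lie in $[\mu+\delta_x+\delta_y]$ in the weak sense of (i), they already both lie in $[\mu]$. The only genuinely delicate point is the multiplicity bookkeeping in the obstacle step, and there I would lean directly on the already-proved fact (from Lemma~\ref{lemma:generator-hull}) that $\partial$ preserves multiplicities, which reduces everything to evaluating the measures in (H2)--(H3) at the singletons $\{x\}$ and $\{y\}$.
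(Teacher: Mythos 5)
Your (iii)$\Rightarrow$(i) and (ii)$\Rightarrow$(iii) steps are correct; in each you invoke (H4) directly, which is a small and legitimate shortcut compared with the paper's appeal to Lemma~\ref{lemma:mid-way}/(H3b) for (ii)$\Rightarrow$(iii). The problem is (i)$\Rightarrow$(ii), which you rightly flag as the obstacle and then leave genuinely open. The detour through Lemma~\ref{lemma:minus-point} to get $H_x(\mu)=H_x(\mu+\delta_x)$ is a red herring: that identity holds unconditionally and does not involve hypothesis~(i) at all, so it cannot force $H_x(\mu)=0$. You then say a contradiction in the case $H_x(\mu)=1$ follows ``exactly as in the proof of Lemma~\ref{lemma:generator-hull}'' via a multiplicity count, but you never state the count. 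To close the gap along your lines you would need: from $H_x(\mu)=1$ and Lemma~\ref{lemma:minus-point}, $\partial(\mu+\delta_x)\ne\partial((\mu+\delta_x)-\delta_x)$, so by \eqref{eq:partial} $x\in\partial(\mu+\delta_x)$, and by the multiplicity-preservation fact established in the proof of Lemma~\ref{lemma:generator-hull}, $\partial(\mu+\delta_x)(\{x\})=\mu(\{x\})+1$; but (i) gives $\partial(\mu+\delta_x)=\partial(\mu+\delta_y)\le\mu+\delta_y$ (by (H1)), whose value at $\{x\}$ is $\mu(\{x\})+\I\{x=y\}$, a contradiction when $x\ne y$, with the case $x=y$ handled separately. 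None of this is in your write-up: you neither establish $x\in\partial(\mu+\delta_x)$, nor invoke multiplicity retention, nor isolate the $x=y$ case.

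The paper avoids all of this with a cleaner observation that you do not make. Set $\mu':=\partial(\mu+\delta_x+\delta_y)$; by (i) this equals both $\partial(\mu+\delta_x)$ and $\partial(\mu+\delta_y)$. If $y\in\mu'$, (H2) applied to $\mu'=\partial(\mu+\delta_x)$ would give $\partial(\mu+\delta_x+\delta_y)=\mu'+\delta_y\ne\mu'$, a contradiction; symmetrically $x\notin\mu'$. Since $\mu'\le\mu+\delta_x+\delta_y$ and $\mu'$ has no mass at $x$ or $y$, in fact $\mu'\le\mu$. Because $\partial\mu'=\mu'=\partial(\mu+\delta_x+\delta_y)$ by (H3a), Lemma~\ref{lemma:mid-way}(H3b) applied with $\mu''=\mu-\mu'$ yields $\partial\mu=\partial(\mu+\delta_x+\delta_y)$, which together with (i) gives (ii). This is shorter, needs no case split on $x\in\mu$ or $x=y$, and does not rely on multiplicity bookkeeping beyond the support argument. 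I recommend either adopting this route or, if you want to keep yours, actually writing out the three cases and the multiplicity computation.
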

\begin{proof}
  (i)$\Rightarrow$(ii) For $x,y\in\BX$, define
  \begin{equation}
    \label{eq:eta-prime}
    \mu':=\partial(\mu+\delta_x+\delta_y)=\partial(\mu+\delta_x). 
  \end{equation}
  If $y\in\mu'$, then \eqref{eq:1a} yields that
  $\mu'=\partial(\mu+\delta_x)+\delta_y$, a contradiction to
  \eqref{eq:eta-prime}. The same applies to $x$. Hence, we can assume
  $x,y\notin \mu'$. 
  Then $\mu-\mu'\geq 0$. Since
  $\partial\mu'=\partial(\mu+\delta_x+\delta_y)$,
  Lemma~\ref{lemma:mid-way} yields that
  \begin{displaymath}
    \partial\mu= \partial(\mu'+(\mu-\mu'))=\partial(\mu+\delta_x+\delta_y).
  \end{displaymath}

  \noindent
  (ii)$\Rightarrow$(iii) Since
  $\partial\mu=\partial(\mu+\delta_x+\delta_y)$,
  Lemma~\ref{lemma:mid-way} yields that $\partial\mu=\partial(\mu+\delta_y)$.

  \noindent
  (iii)$\Rightarrow$(i) follows from (H4). 
\end{proof}

The following result extends a part of Lemma~\ref{lemma:T} to several
points.

\begin{lemma}
  \label{lemma:cyclic}
  Let $x_1,\dots,x_m\in\BX$ for some $m\geq 2$. Then
  $\mu\in\bN$ satisfies
  \begin{displaymath}
    \partial(\mu+\delta_{x_1})=\partial(\mu+\delta_{x_1}+\delta_{x_2}),
    \dots,\partial(\mu+\delta_{x_m})=\partial(\mu+\delta_{x_m}+\delta_{x_1})
  \end{displaymath}
  if and only if 
  \begin{math}
    \partial(\mu+\delta_{x_1})=\cdots=\partial(\mu+\delta_{x_m})=\partial\mu.
  \end{math}
\end{lemma}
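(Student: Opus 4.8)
The plan is to prove the equivalence by a cyclic chaining argument built on Lemma~\ref{lemma:T}. The ``if'' direction is immediate: if $\partial(\mu+\delta_{x_i})=\partial\mu$ for every $i$, then for any pair $i,j$ the implication (iii)$\Rightarrow$(i) of Lemma~\ref{lemma:T} gives $\partial(\mu+\delta_{x_i}+\delta_{x_j})=\partial(\mu+\delta_{x_i})$, so in particular each consecutive identity $\partial(\mu+\delta_{x_k})=\partial(\mu+\delta_{x_k}+\delta_{x_{k+1}})$ (indices cyclic) holds. So the real content is the ``only if'' direction.

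For the ``only if'' direction, I would argue as follows. Suppose the $m$ cyclic identities $\partial(\mu+\delta_{x_k})=\partial(\mu+\delta_{x_k}+\delta_{x_{k+1}})$ hold for $k=1,\dots,m$ (with $x_{m+1}:=x_1$). First handle multiplicities: if some $x_k$ already lies in $\partial(\mu+\delta_{x_k})$, then by (H2) we would get $\partial(\mu+\delta_{x_k}+\delta_{x_{k+1}})=\partial(\mu+\delta_{x_k})+\delta_{x_{k+1}}$ if $x_{k+1}$ is a new point, forcing $\delta_{x_{k+1}}=0$, impossible; a little care using (H2) shows none of the $x_k$ can belong to the relevant $\partial$-images, so all the $\mu+\delta_{x_k}$ and $\mu+\delta_{x_k}+\delta_{x_{k+1}}$ have these points with multiplicity one above $\mu$, and $\mu-\partial(\mu+\delta_{x_k})\ge 0$. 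Now the idea is to propagate a single ``stable'' hull around the cycle. Set $\mu':=\partial(\mu+\delta_{x_1})=\partial(\mu+\delta_{x_1}+\delta_{x_2})$. Since $x_2\notin\mu'$ we have $\mu+\delta_{x_2}-\mu'\ge 0$, and $\partial\mu'=\mu'$ together with Lemma~\ref{lemma:mid-way}(H3b) applied to $\mu'\le\mu+\delta_{x_1}+\delta_{x_2}$ and the added mass $\mu+\delta_{x_2}-\mu'$ gives $\partial(\mu+\delta_{x_2})=\mu'$. Thus $\partial(\mu+\delta_{x_1})=\partial(\mu+\delta_{x_2})=\mu'$. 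Iterating this step along the cycle — at stage $k$ we know $\partial(\mu+\delta_{x_k})=\mu'$, the $k$-th hypothesis gives $\partial(\mu+\delta_{x_k}+\delta_{x_{k+1}})=\mu'$, and the same Lemma~\ref{lemma:mid-way}(H3b) manoeuvre yields $\partial(\mu+\delta_{x_{k+1}})=\mu'$ — we conclude $\partial(\mu+\delta_{x_j})=\mu'$ for all $j=1,\dots,m$. Finally, closing the cycle: from $\partial(\mu+\delta_{x_m})=\mu'$ and $\partial(\mu+\delta_{x_m}+\delta_{x_1})=\mu'$ we can now invoke Lemma~\ref{lemma:T}, implication (i)$\Rightarrow$(ii) with the roles of $\mu,x_m,x_1$, to obtain $\mu'=\partial\mu$; hence $\partial(\mu+\delta_{x_j})=\partial\mu$ for all $j$, as claimed.

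Alternatively, and perhaps more cleanly, one can avoid re-deriving the propagation step and instead apply Lemma~\ref{lemma:T} directly at each link of the cycle: for each $k$, the $k$-th hypothesis $\partial(\mu+\delta_{x_k})=\partial(\mu+\delta_{x_k}+\delta_{x_{k+1}})$ is \emph{almost} condition (i) of Lemma~\ref{lemma:T} (with base measure $\mu$ and points $x_k,x_{k+1}$), lacking only the clause $=\partial(\mu+\delta_{x_{k+1}})$. The trick is that once we know $\partial(\mu+\delta_{x_k})=\partial(\mu+\delta_{x_k}+\delta_{x_{k+1}})$ and that $x_{k+1}\notin \partial(\mu+\delta_{x_k}+\delta_{x_{k+1}})$ (the multiplicity observation above), the argument in (i)$\Rightarrow$(ii) of Lemma~\ref{lemma:T} — which only uses $x,y\notin\mu'$ and $\partial\mu'=\partial(\mu+\delta_x+\delta_y)$ — actually shows $\partial(\mu+\delta_{x_k}+\delta_{x_{k+1}})=\partial\mu$, hence also $\partial(\mu+\delta_{x_k})=\partial\mu$. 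Doing this for $k=1$ already gives $\partial(\mu+\delta_{x_1})=\partial\mu$, and doing it for every $k$ gives all the identities at once; the cyclic structure is then not even needed beyond ensuring every index appears as some ``$x_k$''. I would present whichever of the two versions is shorter after checking the details of the multiplicity step.

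The main obstacle I anticipate is the bookkeeping around multiplicities and the precise applicability of Lemma~\ref{lemma:mid-way}(H3b): one must verify that each point $x_k$ is genuinely ``new'' relative to the hull it is being added to, so that the inequalities $\mu'\le\mu+\delta_{x_k}+\delta_{x_{k+1}}$ and $(\mu+\delta_{x_{k+1}}-\mu')\le \text{(something)}$ needed to invoke (H3b) really hold, rather than failing on a set where $\mu$ and the Dirac masses overlap. The situation where some $x_i=x_j$ for $i\ne j$, or where $x_i\in\supp\mu$, needs to be checked to cause no trouble — here (H2) and the multiplicity-retention fact proved in Lemma~\ref{lemma:generator-hull} (that $\partial\mu(\{x\})=\mu(\{x\})$ for $x\in\partial\mu$) are the key tools. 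Everything else is a routine iteration, and the closing step is a direct citation of Lemma~\ref{lemma:T}.
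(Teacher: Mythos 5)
Your plan attempts to avoid the paper's ``lift to the top and project down'' argument and instead propagate a stable hull around the cycle link by link, but both variants founder on the same point: the claim that $x_k\notin\partial(\mu+\delta_{x_k})$ cannot be obtained from the $k$-th cyclic hypothesis alone. Your (H2) argument for this (``if $x_k\in\partial(\mu+\delta_{x_k})$, then $\partial(\mu+\delta_{x_k}+\delta_{x_{k+1}})=\partial(\mu+\delta_{x_k})+\delta_{x_{k+1}}$'') misapplies (H2): the hypothesis of (H2) needed for that conclusion would be $x_{k+1}\in\partial(\mu+\delta_{x_k})$, not $x_k\in\partial(\mu+\delta_{x_k})$. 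In fact the claim is false. Take the convex-hull generator in $\R^2$, $\mu=\delta_{(1,0)}+\delta_{(-1,0)}+\delta_{(0,1)}+\delta_{(0,-1)}$, $x_1=(1,0)$, $x_2=(0,0)$. Then $\mu'=\partial(\mu+\delta_{x_1})=\partial(\mu+\delta_{x_1}+\delta_{x_2})=\mu+\delta_{(1,0)}$, so the first hypothesis holds, yet $x_1\in\mu'$, $\mu'\not\le\mu+\delta_{x_2}$, and $\partial(\mu+\delta_{x_2})=\mu\ne\mu'$. So the propagation step from $k=1$ to $k=2$ fails even though the $k=1$ hypothesis is satisfied. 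Note, by \eqref{eq:partial}, that $x_1\notin\partial(\mu+\delta_{x_1})$ is \emph{equivalent} to $\partial(\mu+\delta_{x_1})=\partial\mu$, i.e.\ to the conclusion you are trying to reach; you cannot establish it for free at stage $1$. The same criticism kills your second (``alternative'') version: the (i)$\Rightarrow$(ii) argument of Lemma~\ref{lemma:T} genuinely uses \emph{both} equalities $\mu'=\partial(\mu+\delta_x)$ and $\mu'=\partial(\mu+\delta_y)$ to rule out $x,y\in\mu'$ (one for each point), and you only have one of them at each link.

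What the paper does instead is precisely designed to dodge this circularity. It first applies (H4), with $\psi=\delta_{\bx}-\delta_{x_i}-\delta_{x_{i+1}}$, to each of the $m$ hypotheses simultaneously, obtaining the \emph{single} identity $\mu':=\partial(\mu+\delta_{\bx})=\partial(\mu+\delta_{\bx}-\delta_{x_j})$ for every $j$. Only at this level can one deduce $x_j\notin\mu'$ for all $j$: if $x_j\in\mu'=\partial(\mu+\delta_{\bx}-\delta_{x_j})$, then by (H2), $\partial(\mu+\delta_{\bx})=\mu'+\delta_{x_j}\ne\mu'$. From $x_j\notin\mu'$ for all $j$ it follows that $\mu-\mu'\ge 0$, and one application of Lemma~\ref{lemma:mid-way}(H3b) brings $\partial(\mu+\delta_{\bx})$ down to $\partial\mu$ and to each $\partial(\mu+\delta_{x_i})$. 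Your proof needs to be restructured along these lines: work at the top level $\mu+\delta_{\bx}$, using all $m$ hypotheses via (H4) at once, rather than peeling off one hypothesis at a time.
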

\begin{proof}
  Sufficiency immediately follows from (H4).  For the proof of
  necessity, denote $\delta_\bx:=\delta_{x_1}+\cdots+\delta_{x_m}$. By
  (H4)
  \begin{displaymath}
    \mu':=\partial(\mu+\delta_\bx)=\partial(\mu+\delta_\bx-\delta_{x_i}),\quad
    i=1,\dots,m.
  \end{displaymath}
  Then $x_i\notin\mu'$ for all $i=1,\dots,m$, so that
  $\mu-\mu'\geq0$. Since $\partial\mu'=\partial(\mu+\delta_\bx)$,
  Lemma~\ref{lemma:mid-way} yields that
  $\partial\mu=\partial\big(\mu'+(\mu-\mu')\big)=\partial(\mu+\delta_\bx)$.
  Hence, $\partial\mu=\partial(\mu+\delta_{x_i})$ for all
  $i=1,\dots,m$.
\end{proof}

A trivial example of a generator is $\partial\mu:=\mu$.  In this case,
$[\mu]=\emptyset$ for all $\mu\in\bN$.  The following is the most
standard nontrivial example of a generator.

\begin{example}
  \label{ex:chull}
  Let $\BX$ be an open subset of $\R^d$. For $\mu\in\bN$, define
  $\partial\mu$ to be the restriction of $\mu$ to the extreme points
  (vertices) of the convex hull of the support of $\mu$. The
  properties (H1)--(H4) are easy to check. By Lemma~\ref{lemma:graph},
  the measurability of $\partial$ follows from the graph measurability
  of the corresponding hull operator given by the convex hull of the
  support of $\mu$ with eliminated vertices.  It suffices to assume
  that $\mu$ is finite, since our hull operator satisfies
  $[\mu_{B_n}]\uparrow [\mu]$ if $B_n\in\cX_0$ and $B_n\uparrow\BX$.
  Suppose that $\mu=\delta_{x_1}+\cdots+\delta_{x_n}$ for some
  $x_1,\ldots,x_n\in\R^d$, and let $x\in\R^d$. Then $x\in[\mu]$ if and
  only if there exist $k\le d+1$ and $i_1,\ldots,i_k\le n$ such that
  $x\in \mathrm{rel\,int}\conv(\{x_{i_1},\dots,x_{i_k}\})$, where
  $\mathrm{rel\,int}\conv(\{x_{i_1},\dots,x_{i_k}\})$ is the relative
  interior of the convex hull of $x_{i_1},\dots,x_{i_k}$.  The mapping
  $(x,y_1,\ldots,y_m)\mapsto \I\big\{x\in
  \mathrm{rel\,int}\conv(\{y_1,\dots,y_m\})\big\}$ is measurable for each
  $m\in\N$. The asserted measurability of
  $(x,\mu)\mapsto \I\{x\in[\mu]\}$ follows from the fact, that the
  points of $\mu$ can be numbered in a measurable way, see \cite[Corollary~6.5]{LastPenrose17}.
  \begin{enumerate}
  \item [(i)] Assume that $\BX=\R^d$ and that $\cX_0$ is the family of
    all bounded Borel sets in $\BX$. This is the setting of
    Example~\ref{ex:ex-basic}.
    If $\cX_0$ is the family of all Borel sets, then $\mu$ is
      finite and we arrive at the setting of \cite{BaldinReiss16}.
  \item [(ii)] Assume that $\BX$ is a proper open cone in $\R^d$ and $\cX_0$ is the
    family of relatively compact subsets of $\BX$. Then $\mu$ may be
    infinite and its support may have a concentration point at the
    origin. As a result, the generator $\partial\mu$ may contain
    infinitely many points. For instance, this is the case if $\mu$
    is a realisation of a homogeneous Poisson process on
    $\BX=(0,\infty)^d$.
  \item [(iii)] Assume that $\BX$ is a proper open cone in $\R^d$ and $\mu$ is a
    realisation of the Poisson process with intensity
    $\|x\|^{-d}$. Such $\mu$ has a concentration point at the origin
    and infinitely many points in the complement to any ball. In this
    case, the convex hull of $\mu$ is the whole $\BX$, hence,
    $\partial\mu=0$. 
  \end{enumerate}
\end{example}

\begin{example}
  Let $K$ be a convex compact set, and let
  $\BX:=\{(s,u):s\in K, 0\leq u\leq \rho(s,K^c)\}$ be a subset of
  $K\times[0,\infty)$, where $\rho(s,K^c)$ is the distance from $s$ to
  the complement of $K$. Denote by $B_u(s)$ the Euclidean ball of
  radius $u$ centred at $s$. For $\mu\in\bN$, define $\partial\mu$ as
  the restriction of $\mu$ onto $(s,u)\in\mu$ such that the union of
  $B_u(s)$ for $(s,u)\in\mu$ is not equal to the union of $B_u(s)$ for
  $(s,u)\in \mu-\mu(\{(s,u)\})\delta_{(s,u)}$.
\end{example}

\subsection{Difference operators}
\label{sec:difference-operators}

In order to apply stochastic calculus tools, it is necessary to find
out how a functional $G:\bN\to\R$ changes under addition of extra
points to its argument.  The first order difference is defined by
\begin{displaymath}
  D_x G(\mu):=G(\mu+\delta_x)-G(\mu),
\end{displaymath}
and higher order ones are defined by iterating
\begin{displaymath}
  D^{m+1}_{x_1,\dots,x_m,x_{m+1}} G(\mu):=D^m_{x_1,\dots,x_m}
  G(\mu+\delta_{x_{m+1}})-D^m_{x_1,\dots,x_m} G(\mu).
\end{displaymath}

These constructions will be often applied to $H_z(\mu)$ considered 
as a function of $\mu$.  Then
\begin{displaymath}
  D_x H_z(\mu)
  =\I\big\{\partial(\mu+\delta_x)\neq \partial(\mu+\delta_x+\delta_z)\big\}
  -\I\big\{\partial\mu\neq \partial(\mu+\delta_z)\big\}.
\end{displaymath}
If $\partial\mu=\partial(\mu+\delta_z)$, then also $\partial(\mu+\delta_x)=
\partial(\mu+\delta_x+\delta_z)$ by (H4). Hence, 
\begin{equation}
  \label{e3.8}
  D_x H_z(\mu)=-H_z(\mu)\bH_z(\mu+\delta_x). 
\end{equation}
The higher order differences can be found by induction as
\begin{align}  \label{eq:16}
  D^m_{x_1,\dots,x_m} H_z(\mu)
  =(-1)^mH_z(\mu)\bigg[\sum_{k=1}^m (-1)^{k-1}\sum_{1\leq j_1<\cdots<j_k\leq
  m} \bH_z(\mu+\delta_{x_{j_1}}+\cdots+\delta_{x_{j_k}})\bigg].
\end{align}

\begin{lemma}
  \label{lemma:cyclic-bis}
  For all $m\geq 2$ and $z_1,\dots,z_m\in\BX$,
  \begin{equation}
    \label{eq:cyclic}
    D_{z_1}H_{z_2}(\mu)D_{z_2}H_{z_3}(\mu)\cdots D_{z_m}H_{z_1}(\mu)=0,\qquad \mu\in\bN. 
  \end{equation}
\end{lemma}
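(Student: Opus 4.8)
The plan is to use the explicit formula \eqref{e3.8}, namely $D_{z_i}H_{z_{i+1}}(\mu)=-H_{z_{i+1}}(\mu)\bH_{z_{i+1}}(\mu+\delta_{z_i})$, and argue that the product on the left-hand side of \eqref{eq:cyclic} cannot have all $m$ factors nonzero simultaneously. Since each factor is a product of an $H$-term and an $\bH$-term, taking values in $\{0,-1\}$, the product \eqref{eq:cyclic} is nonzero if and only if $H_{z_{i+1}}(\mu)=1$ and $\bH_{z_{i+1}}(\mu+\delta_{z_i})=1$ for every $i=1,\dots,m$ (indices taken cyclically, so $z_{m+1}=z_1$). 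The strategy is to suppose all these hold and derive a contradiction.

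First I would rewrite the two conditions in terms of the generator. The condition $\bH_{z_{i+1}}(\mu+\delta_{z_i})=1$ means $\partial(\mu+\delta_{z_i}+\delta_{z_{i+1}})=\partial(\mu+\delta_{z_i})$, while $H_{z_{i+1}}(\mu)=1$ means $\partial(\mu+\delta_{z_{i+1}})\ne\partial\mu$. So the standing assumption is precisely the hypothesis list of Lemma~\ref{lemma:cyclic} applied to the points $z_1,\dots,z_m$: $\partial(\mu+\delta_{z_i})=\partial(\mu+\delta_{z_i}+\delta_{z_{i+1}})$ for all $i$ (cyclically). By Lemma~\ref{lemma:cyclic}, this forces $\partial(\mu+\delta_{z_i})=\cdots=\partial\mu$ for all $i$, in particular $\partial(\mu+\delta_{z_{i+1}})=\partial\mu$, i.e.\ $H_{z_{i+1}}(\mu)=0$ for all $i$. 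This contradicts $H_{z_{i+1}}(\mu)=1$ (for instance for $i=1$). Hence the product is zero.

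There is a small bookkeeping subtlety I would address: the cyclic chain of conditions in Lemma~\ref{lemma:cyclic} reads $\partial(\mu+\delta_{z_1})=\partial(\mu+\delta_{z_1}+\delta_{z_2}),\dots,\partial(\mu+\delta_{z_m})=\partial(\mu+\delta_{z_m}+\delta_{z_1})$, which is exactly the list of $\bH$-conditions $\bH_{z_{i+1}}(\mu+\delta_{z_i})=1$ with the index convention $z_{m+1}=z_1$; so the hypothesis of that lemma is met by the $\bH$-factors alone, and the $H$-factors are not even needed to invoke it. The contradiction then comes from comparing the conclusion of Lemma~\ref{lemma:cyclic} with any single $H$-factor being $1$. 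The main (and only) obstacle is making sure the index shifts line up correctly between the product \eqref{eq:cyclic}, the formula \eqref{e3.8}, and the statement of Lemma~\ref{lemma:cyclic}; once that is pinned down the proof is immediate. I would also note the degenerate possibilities (some $z_i$ equal, or $z_i\in\mu$) cause no trouble, since \eqref{e3.8} and Lemma~\ref{lemma:cyclic} are stated for arbitrary points of $\BX$.
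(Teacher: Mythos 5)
Your proof is correct and takes essentially the same approach as the paper's: both expand each factor via \eqref{e3.8}, observe that nonvanishing of the product forces the $\bH$-conditions $\bH_{z_{i+1}}(\mu+\delta_{z_i})=1$ which form exactly the cyclic hypothesis of Lemma~\ref{lemma:cyclic}, and then contradict the simultaneous requirement $H_{z_{i+1}}(\mu)=1$ using that lemma's conclusion. The paper's proof is just a more terse version of your argument; the index bookkeeping you flagged is handled identically in both.
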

\begin{proof}
  If the product in \eqref{eq:cyclic} does not vanish, then, by
  \eqref{e3.8},
  \begin{displaymath}
    \bH_{z_2}(\mu+\delta_{z_1})\cdots \bH_{z_1}(\mu+\delta_{z_m})=1.
  \end{displaymath}
  By Lemma~\ref{lemma:cyclic}, $H_{z_1}(\mu)=\cdots=H_{z_m}(\mu)=0$. 
\end{proof}

Note that Lemma~\ref{lemma:T} implies that 
\begin{equation}
  \label{eq:17}
  \bH_y(\mu+\delta_x)\bH_x(\mu+\delta_y)=\bH_x(\mu) \bH_y(\mu).
\end{equation}
Together with (H4), \eqref{eq:17} implies \eqref{eq:cyclic} for $n=2$.
In all interesting cases, the generator is nontrivial in the sense that
\begin{equation}
  \label{eq:nontrivial}
  \partial\delta_x=\delta_x, \quad x\in\BX,
\end{equation}
equivalently, $\bH_x(0)=1$.
If this property holds, letting
$\mu=0$ in \eqref{eq:17} yields $\bH_x(\delta_y)\bH_y(\delta_x)=0$.

\subsection{Generators with the prime property}
\label{sec:prime-property}

The generator is said to satisfy the \emph{prime property} if the
corresponding function $H$ satisfies
\begin{equation}
  \label{eq:6}
  H_z(\mu)=\prod_{x\in\mu} H_z(\delta_x) 
\end{equation}
for all $\mu\in\bN$, equivalently,
$\bH_z(\mu)=\max_{x\in\mu} \bH_z(\delta_x)$. Note that (H4) always
implies that $H_z(\mu)$ is dominated by the product on the right-hand
side of \eqref{eq:6}, equivalently,
$\bH_z(\mu)\geq \max_{x\in\mu} \bH_z(\delta_x)$.  If a generator
satisfies the prime property, then \eqref{e3.8} simplifies to
\begin{math}
  D_x H_z(\mu)=-H_z(\mu)\bH_z(\delta_x).
\end{math}
The prime property substantially simplifies many formulas, for
instance, the forthcoming variance formula \eqref{evarianceprime} (see
also Lemma~\ref{lemma:H-bound}) or the bounds on the normal
approximation in Corollary~\ref{cor:normal-prime}.  It does not hold
in the convex hull example, but holds in the function setting of
Lemma~\ref{lemma:domination}.

\begin{lemma}
  \label{lemma:prime-c}
  A generator is prime if and only if the corresponding hull operator
  satisfies
  \begin{equation}
    \label{eq:prime-brackets}
    [\mu]=\bigcup_{x\in\mu}[\delta_x].
  \end{equation}
\end{lemma}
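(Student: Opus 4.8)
The plan is to prove the equivalence by unwinding both sides in terms of the function $H$, using the characterisation $[\mu]=\{x: H_x(\mu)=0\}$ from \eqref{eq:nu}. Since the prime property \eqref{eq:6} is by definition the statement that $H_z(\mu)=\prod_{x\in\mu}H_z(\delta_x)$, and the right-hand side is the indicator that $H_z(\delta_x)=1$ for \emph{every} $x\in\mu$, its negation $\bH_z(\mu)=\max_{x\in\mu}\bH_z(\delta_x)$ says that $z\in[\mu]$ iff $z\in[\delta_x]$ for \emph{some} $x\in\mu$, i.e. iff $z\in\bigcup_{x\in\mu}[\delta_x]$. So at the level of a single fixed $\mu$, the two conditions ``$H_z(\mu)=\prod_{x\in\mu}H_z(\delta_x)$ for all $z$'' and ``$[\mu]=\bigcup_{x\in\mu}[\delta_x]$'' are literally the same statement rewritten. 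The only subtlety is the quantifier over $\mu$: the prime property asks for \eqref{eq:6} for all $\mu\in\bN$, and \eqref{eq:prime-brackets} likewise for all $\mu$, so matching them $\mu$ by $\mu$ suffices.

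Concretely I would argue as follows. First I record the elementary observation, already noted right after \eqref{eq:6}, that (H4) always gives $\bH_z(\mu)\ge\max_{x\in\mu}\bH_z(\delta_x)$, equivalently $[\mu]\supseteq\bigcup_{x\in\mu}[\delta_x]$: indeed if $z\in[\delta_x]$ then $\partial(\delta_x+\delta_z)=\partial\delta_x$, and applying (H4) with the pair $\delta_x\le\delta_x$, adding $\psi:=\mu-\delta_x$ (which is in $\bN$ since $x\in\mu$) on both sides and then adding $\delta_z$, one gets $\partial(\mu+\delta_z)=\partial\mu$, so $z\in[\mu]$. Hence the inclusion ``$\supseteq$'' in \eqref{eq:prime-brackets} is automatic, and primeness is exactly the reverse inclusion ``$\subseteq$'', i.e. $[\mu]\subseteq\bigcup_{x\in\mu}[\delta_x]$ for all $\mu$.

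Then the equivalence is immediate by negating the defining identity: for a fixed $\mu$ and $z$, the product $\prod_{x\in\mu}H_z(\delta_x)$ equals $1$ precisely when $H_z(\delta_x)=1$ for all $x\in\mu$, i.e. when $z\notin[\delta_x]$ for all $x\in\mu$, i.e. when $z\notin\bigcup_{x\in\mu}[\delta_x]$; while $H_z(\mu)=1$ means $z\notin[\mu]$. Since both $H_z(\mu)$ and the product are $\{0,1\}$-valued, \eqref{eq:6} holds for all $\mu$ and $z$ iff $\{z:z\notin[\mu]\}=\{z:z\notin\bigcup_{x\in\mu}[\delta_x]\}$ for all $\mu$, iff \eqref{eq:prime-brackets} holds for all $\mu$. (One should take a moment over the degenerate case $\mu=0$: the empty product is $1$, consistent with $[0]\supseteq\varnothing$, and if $\partial\delta_x=\delta_x$ fails for some $x$ the statement still holds vacuously at that level; no assumption such as \eqref{eq:nontrivial} is needed.)

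I do not anticipate a genuine obstacle here — this lemma is essentially a reformulation, and the ``hard'' content (that (H4) forces one of the two inclusions) is a one-line consequence of consistency that was already flagged in the text. The one place to be careful is the bookkeeping with multiplicities: $\prod_{x\in\mu}$ ranges over the support of $\mu$, and since each factor $H_z(\delta_x)$ is an idempotent $\{0,1\}$ value, repeated points do not affect the product, so reading the product over $\supp\mu$ versus ``with multiplicity'' makes no difference; I would state this explicitly to avoid any ambiguity. Thus the whole proof is: note the automatic inclusion from (H4), then observe that primeness and \eqref{eq:prime-brackets} are the complementary reformulations of each other.
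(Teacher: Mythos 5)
Your proof is correct and takes essentially the same approach as the paper: both unwind the two conditions via $[\mu]=\{z:H_z(\mu)=0\}$ and observe that \eqref{eq:6} and \eqref{eq:prime-brackets} are complementary reformulations of the same identity, $\mu$ by $\mu$. Your extra remarks (the automatic inclusion from (H4), the multiplicity and $\mu=0$ edge cases) are sound but were already noted in the surrounding text and do not change the argument.
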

\begin{proof}
  If \eqref{eq:6} holds, then
  \begin{displaymath}
    [\mu]=\bigg\{z: \prod_{x\in\mu} H_z(\delta_x)=0\bigg\}
    =\bigcup_{x\in\mu} \big\{z:H_z(\delta_x)=0\big\}.
  \end{displaymath}
  In the other direction, if \eqref{eq:prime-brackets} holds, then
  $H_z(\mu)$ equals the product of $\I\{z\notin[\delta_x]\}$ over
  $x\in\mu$.
\end{proof}

\begin{remark} \label{rem:prime}
  Assume that the generator satisfies
  \eqref{eq:nontrivial}.
  By Lemma~\ref{lemma:T}, letting
  $y\prec x$ whenever $\bH_y(\delta_x)=1$ for $x,y\in\BX$ defines a
  strict partial order on $\BX$. Indeed, the antisymmetry follows from
  $\bH_x(\delta_y)\bH_y(\delta_x)=0$. Furthermore, if $y\prec x$ and
  $z\prec y$, then $\bH_y(\delta_x)=\bH_z(\delta_y)=1$, i.e.,
  $\partial(\delta_y+\delta_x)=\partial(\delta_x)$ and
  $\partial(\delta_y+\delta_z)=\partial(\delta_y)$. By (H4),
  $\partial(\delta_y+\delta_x+\delta_z)=\partial(\delta_x+\delta_z)$
  and
  $\partial(\delta_y+\delta_z+\delta_x)=\partial(\delta_y+\delta_x)$.
  Hence $\partial(\delta_x+\delta_z)=\partial(\delta_x)$, which means
  that $z\prec x$. Therefore $\prec$ is transitive.  The prime
  property of $\partial$ corresponds to the prime property of the
  order relation, see \cite[Proposition~I-3.12]{gie:03}.
\end{remark}

\begin{example}
  Let $\BX=[0,1]^d$. Let $\partial\mu$ be the set of Pareto optimal
  points for $\mu$, that is, points $x\in\mu$ (with retained
  multiplicities) which do not dominate coordinatewisely any
  other point from the support of $\mu$.
  This generator has the prime property. In this case, $[\mu]$ is the
  set of points $y\in\BX$ such that $y\notin\partial\mu$ and $x\leq y$
  coordinatewisely for at least one $x\in\mu$.
\end{example}

\section{Spatial strong Markov property}
\label{secstopping}

A {\em point process} on $\BX$ is a random element of $\bN$, defined
over a given probability space $(\Omega,\cF,\BP)$.  In this paper we
assume that $\eta$ is a {\em Poisson process} $\eta$ with intensity
measure $\lambda$, see \cite{LastPenrose17}, where $\lambda$ is
assumed to be locally finite, that is finite on $\cX_0$.  Its distribution is denoted by
$\Pi_\lambda$.

Consider a generator $\partial\colon\bN\to\bN$ and the
corresponding hull operator.  First,
we need to confirm that the restriction of a point process $\eta$ to the hull $[\eta]$
is indeed a point process.

\begin{lemma}
  \label{lemma:eta-brackets-measurable}
  If $\eta$ is a point process on $\BX$, then its restriction
  $\eta_{[\eta]}$ to $[\eta]$ is also a point process.
\end{lemma}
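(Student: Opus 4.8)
The plan is to reduce the claim to showing that the map $(\omega,x)\mapsto \I\{x\in[\eta(\omega)]\}$ is jointly measurable with respect to $\cF\otimes\cX$, and then to invoke the standard measurable-projection/Fubini-type argument that produces a point process from a jointly measurable integrand. More precisely, for each $A\in\cX$ I would write
\[
  \eta_{[\eta]}(A)=\int \I\{x\in A\}\,\I\{x\in[\eta]\}\,\eta(dx),
\]
so that $\omega\mapsto \eta_{[\eta]}(A)$ is measurable once the integrand $(\omega,x)\mapsto \I\{x\in A\}\I\{x\in[\eta]\}$ is $\cF\otimes\cX$-measurable, by the measurability statement for integrals of jointly measurable functions against a random measure (Lemma~1.15(i) in \cite{kal17}, exactly as used in the proof of Lemma~\ref{lemma:graph}). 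One then has to check that $A\mapsto \eta_{[\eta]}(A)$ is $\cX_0$-finite and integer-valued on $\cX_0$, but this is immediate: $\eta_{[\eta]}\le\eta$ by construction, and $\eta$ is a point process, so $\eta_{[\eta]}$ inherits local finiteness and integer values; hence $\eta_{[\eta]}$ is a random element of $\bN$, i.e.\ a point process.

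The genuine content is therefore the joint measurability of $(\omega,x)\mapsto \I\{x\in[\eta(\omega)]\}$. Here I would split it as the composition of $\omega\mapsto\eta(\omega)$, which is $\cF/\cN$-measurable because $\eta$ is a point process, with the map $(\mu,x)\mapsto\I\{x\in[\mu]\}$ on $\bN\times\BX$. The latter is precisely the graph measurability of the hull operator, and by Lemma~\ref{lemma:graph} it is equivalent to the measurability of the generator $\partial$, which is part of the standing hypothesis on $\partial$. Composing, $(\omega,x)\mapsto \I\{x\in[\eta(\omega)]\}=\I\{x\in[\cdot]\}\circ(\eta\times\id)(\omega,x)$ is $\cF\otimes\cX$-measurable.

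The only step requiring a little care is the very last composition: one needs that $(\omega,x)\mapsto(\eta(\omega),x)$ is measurable from $(\Omega\times\BX,\cF\otimes\cX)$ to $(\bN\times\BX,\cN\otimes\cX)$, which holds since its two coordinates $\omega\mapsto\eta(\omega)$ and $x\mapsto x$ are separately measurable, and a map into a product $\sigma$-algebra is measurable iff its coordinate maps are. I expect no real obstacle; the main point is simply to notice that the graph measurability of $[\cdot]$, already secured in Lemma~\ref{lemma:graph} from the assumed measurability of $\partial$, is exactly the ingredient that makes the restriction $\eta_{[\eta]}$ a well-defined point process, and then to run the same integral-measurability argument used in Lemma~\ref{lemma:graph} to upgrade this to measurability of $\omega\mapsto\eta_{[\eta]}(A)$ for all $A\in\cX$.
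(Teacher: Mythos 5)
Your proof is correct, but it takes a genuinely different route from the paper. The paper observes that $\eta_{[\eta]^c}=\partial\eta$ (by \eqref{eq:5}, i.e.\ $\partial\mu=\mu_{[\mu]^c}$), so $\eta_{[\eta]}=\eta-\eta_{[\eta]^c}=\eta-\partial\eta$, and since $\partial$ is measurable, $\partial\eta$ is a point process and the difference is one too. That's the whole proof. You instead go back to first principles: you establish joint measurability of $(\mu,x)\mapsto\I\{x\in[\mu]\}$ via Lemma~\ref{lemma:graph}, compose with $\omega\mapsto\eta(\omega)$, and then re-run the integral-measurability argument (Kallenberg Lemma~1.15(i)) to get measurability of $\eta_{[\eta]}(A)$. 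This is essentially repeating the second half of the proof of Lemma~\ref{lemma:graph}, now applied to the random measure $\eta$ in place of a deterministic $\mu$. Both approaches work; the paper's identity $\eta_{[\eta]}=\eta-\partial\eta$ buys a one-line reduction to the already-assumed measurability of $\partial$, and sidesteps the need to reinvoke the integral-measurability machinery here. Your version also correctly notes that $\eta_{[\eta]}\le\eta$ forces local finiteness and integer values; the paper gets the same for free by writing $\eta_{[\eta]}$ as a difference of two elements of $\bN$. One small stylistic note on your use of Kallenberg~1.15(i): it applies to functions of $(\mu,x)$ integrated against $\mu$, so to invoke it cleanly you should stress that the integrand factors as $h(\eta_\omega,x)$ with $h(\mu,x)=\I\{x\in A\}\I\{x\in[\mu]\}$; your argument already has all the ingredients, it is just worth making explicit that the $\omega$-dependence enters only through $\eta_\omega\in\bN$.
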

\begin{proof}
  By measurability of the generator, $\partial\eta$ is a point
  process. Then it suffices to note that
  $\eta_{[\eta]}=\eta-\eta_{[\eta]^c}$ and
  $\eta_{[\eta]^c}=\partial\eta$ by \eqref{eq:5}.
\end{proof}

From now on we assume that $\eta$ is a Poisson process
with intensity measure $\lambda$. Assume that
$\partial$ is a generator that satisfies properties (H1)--(H4).
The following result provides an integral representation for the
distribution of $(\partial\eta,\eta_{[\eta]})$ and also shows that
$[\eta]$ satisfies a \emph{strong Markov property} for stopping sets, see
\cite{Zuyev99}. 

Given $x_1,\dots,x_n\in\BX$, we write $\bx:=(x_1,\dots,x_n)$ and
$\delta_{\bx}:=\delta_{x_1}+\cdots+\delta_{x_n}$. Define the set
$[\bx]:=[\delta_{\bx}]$ by \eqref{eq:nu}.  Further, let $C_n$ be the
set of all $\bx\in\BX^n$ such that
$\partial\delta_{\bx}=\delta_{\bx}$.

\begin{theorem}
  \label{tstopping1} 
  For each $n\in\N$, 
  \begin{equation}\label{e3.6}
    \BE\Big[\I\big\{\partial\eta(\BX)=n\big\}
    \I\big\{(\partial\eta,\eta_{[\eta]})\in\cdot\big\}\Big]
    =\frac{1}{n!}\BE\int_{C_n} 
    \I\big\{(\delta_{\bx},\eta_{[\bx]})\in\cdot\big\}
    \exp\big[-\lambda([\bx]^c)\big]\, \lambda^n(d\bx).
  \end{equation}
  Assume that $\partial\eta$ is almost surely finite.
  Then
  \begin{align}
    \label{stopping3}
    \BP\big(\eta_{[\eta]} \in\cdot\mid \partial\eta\big)
    =\Pi_{\lambda_{[\eta]}}(\cdot),\quad \BP\text{-a.s.}
  \end{align}
\end{theorem}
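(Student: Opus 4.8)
The plan is to establish the integral representation \eqref{e3.6} first, and then deduce the conditional law \eqref{stopping3} from it. For \eqref{e3.6}, the natural tool is the multivariate Mecke equation for the Poisson process $\eta$. The point is to decompose the event $\{\partial\eta(\BX)=n\}$ according to which $n$ points of $\eta$ constitute $\partial\eta$. By Lemma~\ref{lemma:generator-hull}, $\partial\eta=\eta_{[\eta]^c}$, so $\partial\eta(\BX)=n$ means exactly that there are $n$ points of $\eta$ outside $[\eta]$ and all remaining points lie in $[\eta]$. I would write, for a bounded measurable test function $g$ on $\bN\times\bN$,
\begin{displaymath}
  \BE\Big[\I\{\partial\eta(\BX)=n\}\, g(\partial\eta,\eta_{[\eta]})\Big]
  =\frac{1}{n!}\,\BE\sum_{(x_1,\dots,x_n)\in\eta_{\ne}^n}
  \I\{\{x_1,\dots,x_n\}=\supp\partial\eta\}\,
  \I\{\partial\eta(\BX)=n\}\, g(\partial\eta,\eta_{[\eta]}),
\end{displaymath}
where $\eta_{\ne}^n$ denotes the $n$-tuples of distinct points; the factor $1/n!$ compensates the ordering. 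The multiplicities cause no trouble because $\partial\mu$ retains the multiplicities of its points (shown in the proof of Lemma~\ref{lemma:generator-hull}), so $\partial\eta$ is in fact simple whenever it is finite, hence $\partial\eta(\BX)=n$ forces $n$ distinct points; strictly one should argue that with probability one $\eta$ has no multiple points if $\lambda$ is diffuse, or handle multiplicities via the general Mecke formula — I would simply invoke that $\partial\eta$ is carried by $n$ distinct points on the event in question.

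The key step is to apply the multivariate Mecke equation to move the sum over $n$-tuples of $\eta$ into an integral against $\lambda^n$ over an independent copy. Concretely,
\begin{align*}
  \BE\sum_{(x_1,\dots,x_n)\in\eta_{\ne}^n}
    F(x_1,\dots,x_n,\eta)
  =\int_{\BX^n}\BE\big[F(\bx,\eta+\delta_{\bx})\big]\,\lambda^n(d\bx),
\end{align*}
with $F(\bx,\mu):=\I\{\supp\partial\mu=\{x_1,\dots,x_n\}\}\I\{\partial\mu(\BX)=n\}\,g(\partial\mu,\mu_{[\mu]})$. Now the crucial combinatorial identity: for the integrand evaluated at $\mu=\eta+\delta_{\bx}$ to be nonzero, we need $\supp\partial(\eta+\delta_{\bx})=\{x_1,\dots,x_n\}$. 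I claim this happens if and only if (a) $\bx\in C_n$, i.e. $\partial\delta_{\bx}=\delta_{\bx}$, and (b) $\eta$ is supported by $[\bx]=[\delta_{\bx}]$. The "if" direction uses (H3)/Lemma~\ref{lemma:stoppingset}: if $\eta\le\delta_{\bx}+\eta$ with $\eta$ carried by $[\bx]^c{}^c=[\bx]$... more precisely, writing $\mu=\delta_{\bx}+\eta$ with $\eta_{[\delta_{\bx}]^c}=0$, Lemma~\ref{lemma:stoppingset} (applied with $\psi=\delta_{\bx}$, after checking $\mu_{[\delta_{\bx}]^c}=\delta_{\bx}$) gives $\partial\mu=\delta_{\bx}$ and $[\mu]=[\bx]$. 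The "only if" direction: if $\supp\partial(\eta+\delta_{\bx})=\{x_1,\dots,x_n\}$ then $\partial(\eta+\delta_{\bx})=\delta_{\bx}$ (matching multiplicities and total mass $n$), and then $\delta_{\bx}=\partial(\eta+\delta_{\bx})\le\eta+\delta_{\bx}$ with $\partial\delta_{\bx}=\partial(\partial(\eta+\delta_{\bx}))=\partial(\eta+\delta_{\bx})=\delta_{\bx}$ by (H3a), so $\bx\in C_n$; and $(\eta+\delta_{\bx})_{[\eta+\delta_{\bx}]^c}=\partial(\eta+\delta_{\bx})=\delta_{\bx}$ forces $\eta$ to be carried by $[\eta+\delta_{\bx}]=[\delta_{\bx}]=[\bx]$ (using \eqref{elZr}). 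On this event $\partial(\eta+\delta_{\bx})=\delta_{\bx}$ and $(\eta+\delta_{\bx})_{[\eta+\delta_{\bx}]}=\eta_{[\bx]}$, so the integrand becomes $\I\{\bx\in C_n\}\,\I\{\eta([\bx]^c)=0\}\,g(\delta_{\bx},\eta_{[\bx]})$. Taking the $\BE$ over $\eta$ and using that $\eta([\bx]^c)$ is Poisson with mean $\lambda([\bx]^c)$ and is independent of $\eta_{[\bx]}$ (restriction to disjoint sets), we get $\BE[\I\{\eta([\bx]^c)=0\}\,g(\delta_{\bx},\eta_{[\bx]})]=e^{-\lambda([\bx]^c)}\,\BE[g(\delta_{\bx},\eta_{[\bx]})]$, which is exactly the right-hand side of \eqref{e3.6}. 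The main obstacle here is the "only if" half of the combinatorial claim, i.e. verifying that $\supp\partial(\eta+\delta_{\bx})=\{x_1,\dots,x_n\}$ really forces $\eta$ to sit inside $[\bx]$ and $\bx\in C_n$ — this is where all the generator axioms (H2), (H3), (H4) and Lemmas~\ref{lemma:generator-hull}–\ref{lemma:stoppingset} get used.

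Finally, for \eqref{stopping3}, assume $\partial\eta$ is a.s.\ finite, so $\BX=\bigcup_n\{\partial\eta(\BX)=n\}$ up to a null set. Fix a bounded measurable $h$ on $\bN$ and a bounded measurable $k$ on $\bN$; I must show $\BE[h(\eta_{[\eta]})\,k(\partial\eta)]=\BE[\,k(\partial\eta)\!\int h\,d\Pi_{\lambda_{[\eta]}}]$. Summing \eqref{e3.6} over $n$ with test function $(\psi,\mu')\mapsto k(\psi)h(\mu')$ gives
\begin{displaymath}
  \BE\big[k(\partial\eta)\,h(\eta_{[\eta]})\big]
  =\sum_{n\ge 1}\frac{1}{n!}\int_{C_n} k(\delta_{\bx})\,
  \BE\big[h(\eta_{[\bx]})\big]\,e^{-\lambda([\bx]^c)}\,\lambda^n(d\bx).
\end{displaymath}
By the restriction theorem for Poisson processes, $\eta_{[\bx]}$ is a Poisson process with intensity $\lambda_{[\bx]}$, so $\BE[h(\eta_{[\bx]})]=\int h\,d\Pi_{\lambda_{[\bx]}}$. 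Hence the right-hand side equals $\sum_n\frac1{n!}\int_{C_n}k(\delta_{\bx})\big(\int h\,d\Pi_{\lambda_{[\bx]}}\big)e^{-\lambda([\bx]^c)}\lambda^n(d\bx)$, which by \eqref{e3.6} again (now with test function $(\psi,\mu')\mapsto k(\psi)\int h\,d\Pi_{\lambda_{\psi}}$, noting $[\bx]$ is determined by $\delta_{\bx}$ and $\lambda_{[\delta_{\bx}]}$ by $\delta_{\bx}$) equals $\BE[k(\partial\eta)\int h\,d\Pi_{\lambda_{[\eta]}}]$. Here one uses \eqref{elZr} to write $\lambda_{[\eta]}=\lambda_{[\partial\eta]}$, so that the conditioning variable $\partial\eta$ indeed determines the intensity $\lambda_{[\eta]}$. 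Since $k$ was an arbitrary bounded measurable function of $\partial\eta$, the defining property of conditional expectation yields \eqref{stopping3}. The only delicate point is the interchange of summation and the $\sigma$-finiteness needed to apply the restriction theorem uniformly; both are routine given local finiteness of $\lambda$ and finiteness of $\partial\eta$.
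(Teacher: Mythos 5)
Your proposal is correct, but it replaces the paper's black-box argument with a self-contained one. The paper proves the theorem in three lines by verifying, via Lemma~\ref{lemma:stoppingset}, that $\mu\mapsto[\mu]^c$ satisfies the defining property of a stopping set in the sense of \cite{osss21}, and then citing the abstract representation formula \cite[Theorem~A.3, (A.4)--(A.5)]{osss21} directly. You instead unroll what that abstract theorem encapsulates: you decompose on $\{\partial\eta(\BX)=n\}$, pass a sum over $n$-tuples of points of $\eta$ through the multivariate Mecke equation, and observe the key equivalence
\begin{align*}
\partial(\eta+\delta_{\bx})=\delta_{\bx}
\quad\Longleftrightarrow\quad
\bx\in C_n \ \text{and}\ \eta([\bx]^c)=0,
\end{align*}
whose ``if'' half is exactly Lemma~\ref{lemma:stoppingset} and whose ``only if'' half uses (H3a), \eqref{elZr}, and \eqref{eq:5}. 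Both routes thus rest on the same structural input (Lemma~\ref{lemma:stoppingset}), but yours is elementary and free of external dependencies, which is valuable if one does not wish to invoke \cite{osss21}. The deduction of \eqref{stopping3} from \eqref{e3.6} is also fine: you apply \eqref{e3.6} in both directions, once with $g(\psi,\mu')=k(\psi)h(\mu')$ and once with $g(\psi,\mu')=k(\psi)\int h\,d\Pi_{\lambda_{[\psi]}}$, and use the restriction theorem plus $[\eta]=[\partial\eta]$ to close the loop.

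One loose end, which you yourself flag: your opening identity with a sum over tuples of \emph{distinct} points and the indicator $\I\{\{x_1,\dots,x_n\}=\supp\partial\eta\}$ is only adequate when $\eta$ is simple, whereas the theorem covers nondiffuse $\lambda$. The clean fix is to replace it by the factorial-measure identity
\begin{align*}
n!\,\I\{\partial\eta(\BX)=n\}
=\int \I\{\delta_{\bx}=\partial\eta\}\,\eta^{(n)}(d\bx),
\end{align*}
which holds with multiplicities because $\partial\mu(\{x\})=\mu(\{x\})$ for $x\in\partial\mu$ (established in the proof of Lemma~\ref{lemma:generator-hull}); the count of ordered tuples from $\eta^{(n)}$ realising $\delta_{\bx}=\partial\eta$ is then exactly $n!$. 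With this substitution the multivariate Mecke equation applies verbatim and the rest of your argument goes through unchanged.
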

\begin{proof}
  We apply Theorem A.3 and equation (A.5) from \cite{osss21} to the mapping
  $\mu\mapsto Z(\mu):=[\mu]^c$.  Let $\mu,\psi\in
  \bN$. Lemma~\ref{lemma:stoppingset} says that $\mu_{Z(\mu)}=\psi$ if
  and only if $\mu_{Z(\psi)}=\psi$, in which case $Z(\mu)=Z(\psi)$.
  By \cite[Remark~A.4]{osss21}, we obtain \eqref{e3.6} from
  \cite[(A.5)]{osss21} and \eqref{stopping3} from
  \cite[(A.4)]{osss21}.
\end{proof}

In order to extend the strong Markov property \eqref{stopping3} to possibly
infinite generators, we need to impose continuity conditions.  The
following result provides an example of such conditions.  
Recall the definition of the sequence $(B_n)$ and the ring $\cX_0$ 
from Section~\ref{sec:basic-properties}. The following
result is proved in the Supplement \cite{las:mol24s}. 

\begin{proposition}  \label{prop:infinite}
  Assume for each $B\in\{B_m:m\in\N\}$ that
  \begin{align}\label{eH6}
    &    \lim_{n\to\infty}\BP\big((\partial \eta_{B_n})_B=(\partial \eta)_B,
      \eta_{[\eta_{B_n}]\cap B}=\eta_{[\eta]\cap B}\big)=1,\\ \label{eH61}
    &\lim_{n\to\infty}\BE\int\I\big\{\mu_{[\eta]\cap B}
      = \mu_{[\eta_{B_n}]\cap B}\big\}\Pi_\lambda(d\mu)=1. 
  \end{align}
  Then \eqref{stopping3} holds. 
\end{proposition}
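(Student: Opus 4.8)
The plan is to verify \eqref{stopping3} through Laplace functionals, reducing to the restrictions $\eta_{B_n}$ of $\eta$, for which Theorem~\ref{tstopping1} is available since $\lambda(B_n)<\infty$ makes $\partial\eta_{B_n}$ almost surely finite. Since $[\eta]=[\partial\eta]$ by \eqref{elZr}, the measure $\lambda_{[\eta]}$ is a measurable function of $\partial\eta$, and it suffices to prove that
\begin{equation}\label{eq:phullplan}
  \BE\big[u(\partial\eta)\,e^{-\eta_{[\eta]}(f)}\big]
  =\BE\big[u(\partial\eta)\,e^{-\lambda_{[\eta]}(1-e^{-f})}\big]
\end{equation}
for every bounded measurable $f\colon\BX\to[0,\infty)$ with $\supp f\subseteq B_k$ for some $k$, and every bounded $u\colon\bN\to\R$ of the form $u(\nu)=h\big(\nu(A_1),\dots,\nu(A_r)\big)$ with $A_1,\dots,A_r\in\cX_0$ and $h$ bounded measurable. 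Indeed, such $u$ form a multiplicative class generating $\sigma(\partial\eta)$, so the functional monotone class theorem upgrades \eqref{eq:phullplan} to $\BE\big[e^{-\eta_{[\eta]}(f)}\mid\partial\eta\big]=e^{-\lambda_{[\eta]}(1-e^{-f})}$; letting $f$ range over a countable family of such functions that determines the laws of point processes on $\BX$ then identifies $\BP(\eta_{[\eta]}\in\cdot\mid\partial\eta)$, through its Laplace functional, as $\Pi_{\lambda_{[\eta]}}$, which is \eqref{stopping3}.

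Fix $f$ and $u$ as above and choose $m\ge k$ with $A_1,\dots,A_r\subseteq B_m$. For $n\ge m$ put $u_n:=h\big((\partial\eta_{B_n})(A_1),\dots,(\partial\eta_{B_n})(A_r)\big)$, a bounded $\sigma(\partial\eta_{B_n})$-measurable random variable. On the event in \eqref{eH6} for $B=B_m$ one has $u=u_n$ (since $A_i\subseteq B_m$ and $(\partial\eta_{B_n})_{B_m}=(\partial\eta)_{B_m}$ there) and $\eta_{[\eta]}(f)=\eta_{[\eta_{B_n}]}(f)$ (since $f$ is supported in $B_k\subseteq B_m$ and $\eta_{[\eta_{B_n}]\cap B_m}=\eta_{[\eta]\cap B_m}$ there); as this event has probability tending to $1$ and all integrands are uniformly bounded,
\begin{equation*}
  \lim_{n\to\infty}\Big(\BE\big[u(\partial\eta)\,e^{-\eta_{[\eta]}(f)}\big]
  -\BE\big[u_n\,e^{-\eta_{[\eta_{B_n}]}(f)}\big]\Big)=0 .
\end{equation*}
Moreover, Theorem~\ref{tstopping1} applied to the Poisson process $\eta_{B_n}$ with intensity $\lambda_{B_n}$ gives $\BP\big((\eta_{B_n})_{[\eta_{B_n}]}\in\cdot\mid\partial\eta_{B_n}\big)=\Pi_{(\lambda_{B_n})_{[\eta_{B_n}]}}(\cdot)$; testing against $u_n$ and the Laplace functional at $f$, and using $\supp f\subseteq B_k\subseteq B_n$ to remove the truncations to $B_n$, yields
\begin{equation*}
  \BE\big[u_n\,e^{-\eta_{[\eta_{B_n}]}(f)}\big]=\BE\big[u_n\,e^{-\lambda_{[\eta_{B_n}]}(1-e^{-f})}\big],\qquad n\ge m .
\end{equation*}

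It remains to pass to the limit on the right, which is where \eqref{eH61} enters. Conditionally on $\eta$, a $\Pi_\lambda$-distributed $\mu$ satisfies $\mu_{[\eta]\cap B}=\mu_{[\eta_{B_n}]\cap B}$ exactly when $\mu$ has no point in $\big([\eta]\triangle[\eta_{B_n}]\big)\cap B$, an event of $\Pi_\lambda$-probability $\exp\big(-\lambda\big(([\eta]\triangle[\eta_{B_n}])\cap B\big)\big)$; hence \eqref{eH61} for $B=B_m$ reads $\BE\big[\exp\big(-\lambda(([\eta]\triangle[\eta_{B_n}])\cap B_m)\big)\big]\to1$, and since this random variable is bounded by $1$, this forces $\lambda\big(([\eta]\triangle[\eta_{B_n}])\cap B_m\big)\to0$ in probability. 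As $0\le 1-e^{-f}\le1$ is supported in $B_k\subseteq B_m$, it follows that $\lambda_{[\eta_{B_n}]}(1-e^{-f})\to\lambda_{[\eta]}(1-e^{-f})$ in probability, hence $e^{-\lambda_{[\eta_{B_n}]}(1-e^{-f})}\to e^{-\lambda_{[\eta]}(1-e^{-f})}$ in probability; together with $u_n\to u(\partial\eta)$ in probability (again by \eqref{eH6}) and boundedness, the last two displays give \eqref{eq:phullplan}. The main obstacle is the multiplier $u(\partial\eta)$: Theorem~\ref{tstopping1} controls only $\eta_{B_n}$ given $\partial\eta_{B_n}$, whereas $\partial\eta$ depends on the whole, possibly infinite, configuration, and \eqref{eH6} is exactly what allows $u(\partial\eta)$ to be replaced by the $\sigma(\partial\eta_{B_n})$-measurable surrogate $u_n$ at asymptotically no cost; a secondary technical point is reading the in-probability convergence $\lambda(([\eta]\triangle[\eta_{B_n}])\cap B_m)\to0$ out of the expected-void-probability form of \eqref{eH61}.
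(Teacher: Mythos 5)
Your proof is correct. It follows the same overall strategy as the paper — truncate to $\eta_{B_n}$, invoke Theorem~\ref{tstopping1} for the finite case, and use \eqref{eH6} and \eqref{eH61} to remove the truncation — but the execution is notably different, and worth comparing. The paper works directly with the equivalent identity
\[
\BE\, h(\partial\eta,\eta_{[\eta]})=\BE\int h(\partial\eta,\mu_{[\eta]})\,\Pi_\lambda(d\mu)
\]
for bounded $h$ depending only on restrictions to $B_m$, splitting both sides as a term supported on the complement of the "good event'' (which vanishes by \eqref{eH6} and, respectively, by \eqref{eH61} used verbatim as a bound on the $\Pi_\lambda$-mass of the bad set) plus the same quantity with $\eta$ replaced by $\eta_{B_n}$. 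In this formulation \eqref{eH61} enters literally, as an $L^1$-type estimate on indicators, with no further interpretation needed. You instead pass to Laplace functionals and a multiplicative class for $u$, which forces you to convert \eqref{eH61} into a void-probability statement, $\BE\big[\exp(-\lambda(([\eta]\setminus[\eta_{B_n}])\cap B_m))\big]\to1$, and from there extract $\lambda(([\eta]\setminus[\eta_{B_n}])\cap B_m)\to 0$ in probability so that $\lambda_{[\eta_{B_n}]}(1-e^{-f})\to\lambda_{[\eta]}(1-e^{-f})$; this step is correct but is an extra translation that the paper's phrasing avoids. What your route buys is a more hands-on derivation in terms of Laplace functionals, which some readers may find more transparent for identifying a conditional distribution as Poisson; what it costs is the additional void-probability bookkeeping and the appeal to a countable determining family of test functions $f$ (which you should mention exists because $\BX$ is Borel). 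One small point you use tacitly and should state: $[\eta_{B_n}]\subset[\eta]$ by the monotonicity in (H4), which is what makes $[\eta]\triangle[\eta_{B_n}]=[\eta]\setminus[\eta_{B_n}]$ and gives the one-sided bound on $\lambda_{[\eta]}(1-e^{-f})-\lambda_{[\eta_{B_n}]}(1-e^{-f})$.
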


\begin{example}[Convex hull of a finite Poisson process]
  \label{ex:chull-stopping}
  Consider Example~\ref{ex:chull}(i) 
  and assume that $\lambda(\R^d)<\infty$.  Theorem~\ref{tstopping1}
  yields that \eqref{stopping3} holds and, for each $n\in\N$,
  \begin{align*}
    \BP\big(\partial\eta(\R^d)=n,\partial\eta\in\cdot\big)
    =\frac{1}{n!}e^{-\lambda(\R^d)}
    \int_{C_n}\I\{\delta_{\bx}\in\cdot\}
    \exp\big[\lambda(\conv(\{x_1,\dots,x_n\}))\big]\,
    \lambda^{n}(d\bx).
  \end{align*}
  The set $C_n$ consists of all $n$-tuples of points that come up as
  vertices of their convex hull.  
  In the special case of a diffuse intensity measure on the unit ball,
  the result \eqref{stopping3} was mentioned in \cite{Privault12}.
  For a homogeneous Poisson point process on a convex body a proof is
  given in \cite{BaldinReiss16} based on stopping set and spatial
  martingale arguments.
\end{example}

\begin{example}[Random polytopes]
  \label{ex:polytope-stopping}
  Let $\BX$ be the affine Grassmannian $A(d,d-1)$, that is, the family
  of all $(d-1)$-dimensional planes in $\R^d$.  Let $H\in
  A(d,d-1)$. If $0\notin H$ we denote by $H^-$ the (closed)
  half-space in $\R^d$ bounded by $H$ such that $0\in H^-$. If $0\in H$ we let
  $H^-:=H$.  For a counting measure $\mu$ on $A(d,d-1)$, define $P_\mu$
  as the intersection of $H^-$ for all $H\in\mu$, and let
  $P_0:=\R^d$. Further, define a generator $\partial\mu$ as the set of
  $H\in\mu$ (with the multiplicites retained) such that $H\cap P_\mu$
  has dimension $d-1$. The corresponding hull operator $[\mu]$ is the
  set of all $H\in A(d,d-1)$ such that $H\cap P_\mu=\emptyset$ or $H$
  contains faces of $P_\mu$ of dimension at most $d-2$.  Let $\eta$ be
  a Poisson process on $A(d,d-1)$ with a diffuse intensity measure
  $\lambda$ such that 
  \begin{math}
    \lambda(\{H: H\cap K\ne\emptyset\})<\infty
  \end{math}
  for each convex body $K$.  We also assume that $\lambda$ does not
  charge the family of hyperplanes which pass through the origin.  The
  random convex set $P_\eta$ is called the {\em Poisson polytope}.
  Since $\lambda$ is diffuse, the hull $[\eta]$ is a.s.\ the family of
  all hyperplanes that do not hit $P_\eta$. The generator
  $\partial\eta$ consists of planes intersecting the boundary of
  $P_\eta$ at $(d-1)$-dimensional facets, and the cardinality of
  $\partial\eta$ is a.s.\ finite, see \cite{MR4134241}. By
  Theorem~\ref{tstopping1}, $\eta$ restricted to $[\eta]$ and
  conditional upon $P_\eta$ a.s.\ coincides with the distribution of
  the Poisson process having the intensity $\lambda$ restricted to
  $\{H\in A(d,d-1):H\cap P_\eta=\emptyset\}$.
\end{example}

\section{Poisson hull estimator}
\label{secestimator}

As before, let $\eta$ be a Poisson process with intensity measure
$\lambda$. For a given function $f\in L^1(\lambda)$,
consider the integral
\begin{displaymath}
  F:=\int f(x)\lambda(dx).
\end{displaymath}
The random variable
\begin{displaymath}
  F^*:=\int f(x)\,\eta(dx)
\end{displaymath}
is known as a {\em linear functional} of $\eta$.  By Campbell's
formula, $\BE F^*=F$. Therefore, if $\eta$ can be observed, then $F^*$
is an unbiased estimator of $F$, providing some information on
$\lambda$.  By \cite[Lemma~12.2]{LastPenrose17},
\begin{align}
  \label{evarsimple}
  \BV F^*=\int f(x)^2\,\lambda(dx).
\end{align}
In the following sections we often assume $f\in L^2(\lambda)$, to
ensure that $\BV F^*<\infty$.

Assume now that $\partial$ is a generator.  If $\partial\eta$ is not
necessarily finite, we assume that the strong Markov property
\eqref{stopping3} holds, see also Proposition~\ref{prop:infinite} and
Lemma~\ref{lMarkovfunctions}.  The conditional expectation
\begin{equation}
  \label{defestimator}
  \hat{F}:=\BE[F^*\mid \partial\eta]
\end{equation}
can be used as an unbiased estimator of $F$.  We call it a
\emph{Poisson hull estimator} (based on the chosen generator
$\partial$) and note that
$\hat{F}\equiv\hat{F}(\eta)\equiv \hat{F}(\eta,f)$ depends on $\eta$
and $f$.  As we will see, this estimator requires $\lambda$ to be
known on $[\eta]$. This knowledge designates our estimator as the
oracle one. 
We will also see, that it generalises both the oracle estimator for
the volume of a convex body from \cite{BaldinReiss16} as well as the
estimator for integrals of H\"older functions studied in
\cite{ReissWahl19}.  There are many more interesting special cases.

If $f\in L^2(\lambda)$ we can use the conditional variance formula to
see that $\hat{F}$ has a smaller variance than $F^*$.  The
most important setting arises when $\lambda$ is the restriction of a
known measure $\bar\lambda$ to an unknown set $A$ from a certain
system $\tilde\cX\subset\cX$.  The next remark shows that, under a
natural assumption, $\partial\eta$ becomes a \emph{sufficient
  statistic} for the \emph{parameter} $A$.

\begin{remark}
  \label{rem:T-sufficient}
  Let $\bar\lambda$ be a
  measure on $\BX$, and let $\tilde\cX\subset\cX$ be a subfamily such
  that the measure $\bar\lambda$ restricted to any $A\in\tilde\cX$ is
  locally finite and that the strong Markov property holds for a
  Poisson process with this intensity measure.  For instance, this is
  the case if $\bar\lambda(A)<\infty$ for each $A\in\tilde\cX$. Assume
  that $\lambda=\bar\lambda_A$ is the restriction of $\bar\lambda$ to
  some $A\in\tilde\cX$.  We consider the set $A\in\tilde\cX$ as a
  parameter and denote the expectation with respect to a Poisson
  process $\eta$ of intensity $\bar\lambda_A$ by $\BE_A$.  Consider a
  generator $\partial$ and assume that the associated hull operator
  satisfies
  \begin{align}\label{eq:suffstat}
    \bar\lambda\big([\eta]\setminus A\big)=0,\quad \BP_A\text{-a.s.},\,A\in\tilde{X}.
  \end{align}
  Then $\partial\eta$ is a sufficient
  statistic for the parameter $A$.  Indeed, by the strong Markov
  property \eqref{stopping3} the conditional distribution
  $\BP_A(\eta\in\cdot\mid\partial\eta)$ depends $\BP_A$-a.s.\ only on
  $(\bar\lambda_A)_{[\eta]}$.  By assumption \eqref{eq:suffstat} we
  have $\BP_A$-a.s.\ that
  $(\bar\lambda_A)_{[\eta]}=\bar\lambda_{[\eta]}$, which does not
  depend on $A$.
\end{remark}

\begin{example}
  \label{e:genoracle}
  This example illustrates Remark~\ref{rem:T-sufficient}.  Assume that
  $\BX=\R^d$ and that $\cX_0$ is the system of all bounded Borel
  sets. Fix $B\in\cX_0$, and let $\tilde{\cX}$ be the system of all
  sets of the form $A=K\setminus B$, where $K\subset\R^d$ is convex
  and compact.  Let $\bar\lambda$ be a locally finite measure on
  $\R^d$ with $\bar\lambda(B)=0$.  Let $\partial$ be the convex hull
  generator from Example~\ref{ex:ex-basic}.  For $A=K\setminus B$ as
  above we have $\BP_A([\eta]\subset K)=1$.  Therefore, we have
  $\bar\lambda([\eta]\setminus A)=\bar\lambda([\eta]\cap B)=0$,
  $\BP_A$-a.s.,
  so that \eqref{eq:suffstat} holds. If $B=\emptyset$ and
  $\bar{\lambda}$ is a known multiple of the Lebesgue measure, then we
  recover the setting of \cite{BaldinReiss16}.
\end{example} 

\begin{lemma}
  \label{lemma:hat-F-1}
  We have 
  \begin{align}
    \label{e6.23}
    \hat{F}=\int f(x)\,\lambda_{[\eta]}(dx)+\int f(x)\,\partial\eta(dx)
    \quad \text{a.s.}
  \end{align}
\end{lemma}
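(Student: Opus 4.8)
The plan is to compute the conditional expectation $\BE[F^*\mid\partial\eta]$ by splitting the linear functional $F^*=\int f(x)\,\eta(dx)$ according to whether a point of $\eta$ lies in $[\eta]$ or not. Using $\eqref{eq:5}$, which gives $\eta_{[\eta]^c}=\partial\eta$, we write
\begin{equation*}
  F^*=\int f(x)\,\eta_{[\eta]}(dx)+\int f(x)\,\partial\eta(dx),
\end{equation*}
where the second term is $\partial\eta$-measurable and hence pulls out of the conditional expectation unchanged, contributing $\int f(x)\,\partial\eta(dx)$. It remains to show that $\BE\bigl[\int f(x)\,\eta_{[\eta]}(dx)\bigm|\partial\eta\bigr]=\int f(x)\,\lambda_{[\eta]}(dx)$ almost surely.

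For this I would invoke the spatial strong Markov property in the form $\eqref{stopping3}$: conditionally on $\partial\eta$, the process $\eta_{[\eta]}$ has distribution $\Pi_{\lambda_{[\eta]}}$. Since $[\eta]$ is $\partial\eta$-measurable (indeed $[\eta]=[\partial\eta]$ by $\eqref{elZr}$), we may treat $[\eta]$ as fixed under the conditional law and apply Campbell's formula to the Poisson process with intensity $\lambda_{[\eta]}$: for a fixed measurable set $S$ with $\lambda(S)<\infty$ one has $\BE\bigl[\int f(x)\,\Pi_{\lambda_S}(dx)\bigr]=\int_S f(x)\,\lambda(dx)$. Applying this with $S=[\eta]$ and taking conditional expectations yields $\BE\bigl[\int f(x)\,\eta_{[\eta]}(dx)\bigm|\partial\eta\bigr]=\int f(x)\,\lambda_{[\eta]}(dx)$, which completes the computation. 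Adding back the $\partial\eta$-measurable term gives $\eqref{e6.23}$.

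The main technical point to be careful about is the integrability needed to justify Campbell's formula conditionally: although $f\in L^1(\lambda)$, the set $[\eta]$ is random and need not have finite $\lambda$-measure a priori, and $\lambda_{[\eta]}$ need only be locally finite. One should therefore argue via a monotone/limiting procedure along the localising sets $B_n\uparrow\BX$ (or note that $[\eta]$ is contained in a set of finite measure in the relevant applications), using $f\in L^1(\lambda)$ to pass to the limit by dominated convergence, and in the infinite-generator case appeal to the standing assumption that $\eqref{stopping3}$ holds (Proposition~\ref{prop:infinite}). A secondary, purely measure-theoretic point is that the conditional expectation on the right-hand side should be understood as a regular conditional distribution so that "plug in the realised value of $[\eta]$" is legitimate; this is standard since $\bN$ is a Borel space. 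I expect the decomposition and the application of $\eqref{stopping3}$ to be routine, with the integrability bookkeeping being the only place requiring a small amount of care.
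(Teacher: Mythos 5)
Your proposal is correct and follows essentially the same route as the paper: split $\eta=\partial\eta+\eta_{[\eta]}$ via \eqref{eq:5}, pull out the $\partial\eta$-measurable term, and apply the strong Markov property \eqref{stopping3} together with Campbell's formula to identify the conditional expectation of the remaining part. The paper states this rather tersely; your additional remarks on integrability along $B_n\uparrow\BX$ and on regular conditional distributions are sound and consistent with how the paper handles the infinite-generator case via Proposition~\ref{prop:infinite}.
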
  
\begin{proof}
  By Lemma~\ref{lemma:generator-hull},
  $\eta=\partial\eta+\eta_{[\eta]}$ a.s.  Hence,
  \begin{displaymath}
    \hat{F}
    =\BE\bigg[\int f(x)\,\eta_{[\eta]}(dx)\Bigm\vert \partial\eta\bigg]
    +\int f(x)\,\partial\eta(dx).
  \end{displaymath}
  The statement follows from the assumed strong Markov property
  \eqref{stopping3}; see
  Theorem~\ref{tstopping1} in case of a finite generator and 
  Proposition~\ref{prop:infinite} for a possibly infinite
  generator $\partial\eta$. 
\end{proof}

Note that the almost sure restriction in \eqref{e6.23} stems from
Theorem~\ref{tstopping1}.  The next result follows from
Lemma~\ref{lemma:hat-F-1} and Lemma~\ref{lemma:generator-hull}.

\begin{lemma}
  \label{lemma:hat-F}
  We have
  \begin{align}
    \label{defestimator-bis}
    \hat{F}-F=
    \int f(z) H_z(\eta-\delta_z)\,\eta(dz)
    -\int f(z) H_z(\eta)\,\lambda(dz) \quad \text{a.s.}
  \end{align}
\end{lemma}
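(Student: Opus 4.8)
The plan is to start from the expression for $\hat F$ provided by Lemma~\ref{lemma:hat-F-1}, namely
\begin{displaymath}
  \hat F=\int f(x)\,\lambda_{[\eta]}(dx)+\int f(x)\,\partial\eta(dx)\quad\text{a.s.},
\end{displaymath}
and to rewrite both integrals in terms of the function $H$. First I would handle the $\partial\eta$-integral. By Lemma~\ref{lemma:generator-hull}, $\partial\eta=\eta_{[\eta]^c}$, so $\int f(x)\,\partial\eta(dx)=\int f(x)\I\{x\notin[\eta]\}\,\eta(dx)$. For a point $x\in\eta$, we have $\I\{x\notin[\eta]\}=H_x(\eta)$ by the definition \eqref{eq:nu} of the hull operator, and then Lemma~\ref{lemma:minus-point} gives $H_x(\eta)=H_x(\eta-\delta_x)$. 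Hence $\int f(x)\,\partial\eta(dx)=\int f(z)H_z(\eta-\delta_z)\,\eta(dz)$, which is exactly the first term on the right-hand side of \eqref{defestimator-bis}.

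Next I would handle the $\lambda_{[\eta]}$-integral. Since $\lambda_{[\eta]}=\lambda(\cdot\cap[\eta])$, we have $\int f(x)\,\lambda_{[\eta]}(dx)=\int f(x)\I\{x\in[\eta]\}\,\lambda(dx)=\int f(x)\bH_x(\eta)\,\lambda(dx)=\int f(x)(1-H_x(\eta))\,\lambda(dx)$, using $\I\{x\in[\eta]\}=\bH_x(\eta)=1-H_x(\eta)$ from \eqref{eq:nu} and the definition of $\bH$. Therefore
\begin{displaymath}
  \hat F=\int f(z)H_z(\eta-\delta_z)\,\eta(dz)+\int f(x)\,\lambda(dx)-\int f(x)H_x(\eta)\,\lambda(dx),
\end{displaymath}
and subtracting $F=\int f(x)\,\lambda(dx)$ from both sides yields \eqref{defestimator-bis}. (One should note that the joint measurability needed to split $\lambda_{[\eta]}$ as an integral against $\I\{x\in[\eta]\}$ is exactly the graph measurability furnished by Lemma~\ref{lemma:graph}, and the decomposition of $\eta$ used above is the a.s.\ identity $\eta=\partial\eta+\eta_{[\eta]}$ from Lemma~\ref{lemma:generator-hull}; the integrability of the relevant integrands is guaranteed by $f\in L^1(\lambda)$ together with $|H|\le 1$.)

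This argument is essentially bookkeeping, so I do not anticipate a genuine obstacle; the only point requiring a little care is the interchange at the single point $x=z$, i.e.\ making sure one uses $H_z(\eta-\delta_z)$ rather than $H_z(\eta)$ inside the $\eta$-integral — this is precisely what Lemma~\ref{lemma:minus-point} is for, and is the reason that lemma was proved. The a.s.\ qualifier propagates from Lemma~\ref{lemma:hat-F-1} (ultimately from Theorem~\ref{tstopping1}), and nothing further is needed.
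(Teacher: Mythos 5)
Your proof is correct and is essentially the bookkeeping the paper has in mind: the paper gives no written proof but states that the result ``follows from Lemma~\ref{lemma:hat-F-1} and Lemma~\ref{lemma:generator-hull}'', and your argument unpacks exactly that, starting from \eqref{e6.23}, using $\partial\eta=\eta_{[\eta]^c}$ and $\I\{x\in[\eta]\}=\bH_x(\eta)$ from \eqref{eq:5} and \eqref{eq:nu}, and invoking Lemma~\ref{lemma:minus-point} for the $\eta-\delta_z$ inside the $\eta$-integral. The only cosmetic difference is that the paper's citation leaves the $H_z(\eta)\to H_z(\eta-\delta_z)$ step to be read off directly from \eqref{eq:partial} of Lemma~\ref{lemma:generator-hull} rather than through the separately stated Lemma~\ref{lemma:minus-point}, but these are interchangeable.
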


\begin{example}[Convex hulls]
  \label{ex:chull-oracle}
  Consider Example~\ref{ex:chull}(i).  Then $[\eta]$ equals the convex
  hull of the support of $\eta$ with the vertices removed.  In the
  special case of $\lambda$ being the Lebesgue measure $V_d$
  restricted to a convex body $K\subset\R^d$ and $f\equiv 1$,
  \eqref{e6.23} provides an estimator for the Lebesgue measure of $K$.
  It reads $\hat{F}=V_d([\eta])+M$, where $M:=\partial\eta(\R^d)$ is
  the cardinality of $\partial\eta$, that is, the number of vertices
  of the convex hull of $\supp\eta$.  This is the oracle estimator for
  the volume of a convex body discovered by \cite{BaldinReiss16}.  In
  the more general case of Example \ref{e:genoracle} (still taking
  $f\equiv 1$) we obtain that $\hat{F}=\bar{\lambda}([\eta])+M$ is an
  unbiased estimator of $\bar{\lambda}(K)$. If $\bar\lambda$ is not
  diffuse, then $M$ includes possible multiplicities.
\end{example}

\begin{example}\label{ex:minimum}
  Let $\eta$ be a homogeneous unit intensity Poisson process on
  $\BX=[a,\infty)$ with $a\in\R$, so that $\lambda$ is the Lebesgue
  measure. Define 
  $\partial\mu:=\mu(\{\zeta(\mu)\})\delta_{\zeta(\mu)}$, where
  $\zeta(\mu):=\min\supp\mu$; note that the minimum is attained and,
  since $\lambda$ is diffuse, there are no multiple points at
  $\zeta:=\zeta(\eta)$. Then $[\eta]=(\zeta,\infty)$. Let $f$
  be an integrable function on $\BX$. Then
  \begin{displaymath}
    \hat{F}=\int_\zeta^\infty f(x)\,dx+f(\zeta)
  \end{displaymath}
  is an unbiased estimator of $F=\int_a^\infty f(x)\,dx$. For instance,
  if $f(x)=(p-1)x^{-p}$ with $p>1$, then $F=a^{1-p}$ is estimated by 
  $\hat{F}=\zeta^{1-p}(1+(p-1)\zeta^{-1})$. This example can be seen
  as a special case of Example~\ref{ex:ex-basic2}.
\end{example}

Suppose that $G\colon \BX\times\bN\to\R$ is measurable and satisfies
$\BE\int |G(x,\eta)|\,\lambda(dx)<\infty$.  Then
\begin{displaymath}
  \deltaKS(G):=\int G(x,\eta-\delta_x)\,\eta(dx)-\int G(x,\eta)\,\lambda(dx)
\end{displaymath}
is said to be the {\em Kabanov--Skorohod integral} of $G$ with respect
to $\eta$. In fact this is a pathwise version of a Malliavin operator,
see \cite[Theorem~5]{Last16}.  By the Mecke equation,
$\BE \deltaKS(G)=0$.  Equation \eqref{defestimator-bis} can be
rewritten as
\begin{equation}
  \label{estimatorKS}
  \hat{F}-F=\deltaKS(fH),\quad \BP\text{-a.s.}
\end{equation}

Let $x\in\BX$. By \eqref{e3.8} and \eqref{defestimator-bis},
\begin{align*} 
  D_x\hat{F}=f(x)H_x(\eta)
  -\int f(z) H_z(\eta-\delta_z)\bH_z(\eta+\delta_x-\delta_z)\eta(dz)
  +\int f(z) H_z(\eta)\bH_z(\eta+\delta_x) \lambda(dz).
\end{align*}
The two latter terms constitute the Kabanov--Skorohod integral of
$(z,\mu)\mapsto f(z)\bH_z(\mu+\delta_x)H_z(\mu)$. This can be
generalised to obtain higher order differences, leading to the chaos
expansion of $\hat{F}$ as described in \cite[Eq.~(36)]{Last16} in
terms of these differences.

\section{Variance of the Poisson hull estimator}
\label{sec:vari-poiss-hull}


As before, we consider a generator which satisfies properties
(H1)--(H4) and a Poisson process $\eta$ with intensity measure
$\lambda$.  In case the generator is infinite, impose additionally
that \eqref{stopping3} holds.  We consider the Poisson hull estimator
\eqref{defestimator}, where it is now assumed that
$f\in L^1(\lambda)\cap L^2(\lambda)$.  The variance of $\hat{F}$ is
calculated as follows.

\begin{theorem}\label{tvariance} 
  Let $f\in L^1(\lambda)\cap L^2(\lambda)$.  
  Then $\hat{F}$, defined at \eqref{defestimator}, is
  square integrable and satisfies
  \begin{align}
    \label{e3.47}
    \BV \hat{F}=\int f(x)^2\BP\big(\partial(\eta+\delta_x)\ne
    \partial\eta\big)\,
    \lambda(dx).
  \end{align}
\end{theorem}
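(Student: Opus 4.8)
The plan is to exploit the representation $\hat F - F = \deltaKS(fH)$ from \eqref{estimatorKS} together with the isometry-type variance formula for Kabanov--Skorohod integrals. First I would check square integrability: since $f\in L^2(\lambda)$ and $|H_z(\mu)|\le 1$, the integrand $G(z,\mu):=f(z)H_z(\mu)$ satisfies $\BE\int G(z,\eta)^2\,\lambda(dz)\le\int f(z)^2\,\lambda(dz)<\infty$, and one also needs the pairing term $\BE\iint (D_x G(z,\eta))^2\,\lambda(dx)\lambda(dz)$ to be finite; using \eqref{e3.8} we have $D_xG(z,\eta)=-f(z)H_z(\eta)\bH_z(\eta+\delta_x)$, so $(D_xG(z,\eta))^2\le f(z)^2 H_z(\eta)$, whose double integral is dominated by $\int f(z)^2\,\lambda(dz)\cdot$ (a finite factor after taking expectations and using Mecke) — actually it is cleaner to bound it directly by $\int f(z)^2\,\lambda(dz)\,\BE[\eta$-type terms$]$, and I would instead invoke the standard criterion that $G$ lies in the domain of $\deltaKS$ with finite second moment once $\BE\int G^2\,d\lambda<\infty$ and the Poincaré-type bound holds. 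Then $\BV\hat F=\BE[\deltaKS(fH)^2]$.

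The key step is to evaluate this second moment. I would use the known identity (see \cite{Last16}, the Kabanov--Skorohod isometry in its pathwise form)
\begin{displaymath}
  \BE\big[\deltaKS(G)^2\big]=\BE\int G(x,\eta)^2\,\lambda(dx)
  +\BE\iint D_xG(z,\eta)\,D_zG(x,\eta)\,\lambda(dx)\,\lambda(dz).
\end{displaymath}
For the first term, $\BE\int f(x)^2 H_x(\eta)^2\,\lambda(dx)=\int f(x)^2\BP(H_x(\eta)=1)\,\lambda(dx)=\int f(x)^2\BP(\partial(\eta+\delta_x)\ne\partial\eta)\,\lambda(dx)$, which is exactly the claimed answer. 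So the entire content of the proof is that \emph{the second (cross) term vanishes}. By \eqref{e3.8},
\begin{displaymath}
  D_xG(z,\eta)\,D_zG(x,\eta)
  =f(x)f(z)\,H_z(\eta)\bH_z(\eta+\delta_x)\,H_x(\eta)\bH_x(\eta+\delta_z).
\end{displaymath}
Now \eqref{eq:17} gives $\bH_z(\eta+\delta_x)\bH_x(\eta+\delta_z)=\bH_x(\eta)\bH_z(\eta)$, so the product becomes $f(x)f(z)\,H_z(\eta)\bH_z(\eta)\,H_x(\eta)\bH_x(\eta)=0$, since $H\cdot\bH\equiv 0$ pointwise. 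Hence the cross term is identically zero, and \eqref{e3.47} follows.

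The main obstacle — really the only subtle point — is the justification of the isometry formula and the domain/integrability bookkeeping: one must confirm that $fH$ belongs to the domain of the Kabanov--Skorohod operator in the $L^2$ sense (not merely the $L^1$ sense used to define $\deltaKS$), so that $\BE[\deltaKS(fH)^2]$ is finite and equals the stated expression. I expect this is handled by the hypothesis $f\in L^1(\lambda)\cap L^2(\lambda)$ combined with $|H|\le 1$: then $\BE\int (fH)^2\,d\lambda<\infty$ and, since $|D_xG(z,\eta)|\le |f(z)|$ with $\BE\iint f(z)^2\I\{\cdots\}\,\lambda(dx)\lambda(dz)$ needing a finiteness argument, one may need to first establish the formula for bounded $f$ supported on a set of finite $\lambda$-measure and then pass to the limit using dominated convergence together with the conditional-variance monotonicity already noted after \eqref{defestimator-bis} (which gives $\BV\hat F\le\BV F^*=\int f^2\,d\lambda<\infty$ a priori). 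Everything else is the short algebraic cancellation above.
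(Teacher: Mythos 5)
Your approach is precisely the ``alternative proof'' the paper itself sketches in the paragraph immediately following Theorem~\ref{tvariance}: write $\hat F-F=\deltaKS(fH)$, invoke the Kabanov--Skorohod isometry, and observe that the cross term vanishes because $D_yH_x(\eta)D_xH_y(\eta)=0$. Your algebraic cancellation is correct (you use \eqref{eq:17} plus $H\bH\equiv0$; the paper cites Lemma~\ref{lemma:cyclic-bis}, which for $m=2$ is the same identity). The paper's \emph{actual} proof is different and shorter: it applies the conditional variance decomposition $\BV\hat F=\BV F^*-\BE\BV[F^*\mid\partial\eta]$, computes $\BV F^*=\int f^2\,d\lambda$ by Campbell, and evaluates the conditional variance via the strong Markov property \eqref{stopping3}, namely $\BV[\int f\,d\eta_{[\eta]}\mid\partial\eta]=\int f^2\bH_x(\eta)\,\lambda(dx)$. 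This route buys two things: square integrability of $\hat F$ is immediate ($\BV\hat F\le\BV F^*<\infty$), and no domain/integrability conditions for the Skorohod operator are needed.

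That last point is where your proposal has a real gap. The KS isometry from \cite[Theorem~5]{Last16} requires not only $\BE\int G^2\,d\lambda<\infty$ but also $\BE\iint (D_xG(z,\eta))^2\,\lambda(dx)\lambda(dz)<\infty$. Your claimed domination $(D_xG(z,\eta))^2\le f(z)^2H_z(\eta)$ is true but useless for the $\lambda(dx)$ integration: it discards the only factor that could make $\int\bH_z(\eta+\delta_x)\,\lambda(dx)$ finite, and this integral can genuinely be infinite when $\lambda(\BX)=\infty$. The paper is explicit about this, qualifying the isometry argument with ``under a suitable moment assumption.'' Your suggested repair (truncate to finite $\lambda$-measure, then pass to the limit using a priori $L^2$ control) is the right instinct, but it is stated at the level of a hope rather than an argument, and it would require truncating the process itself, not merely the support of $f$, since the offending variable is $x$, which ranges over all of $\BX$ regardless of where $f$ lives. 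So: correct idea, correct cancellation, but the integrability bookkeeping is left genuinely incomplete, whereas the paper's conditional-variance proof avoids the issue entirely.
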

\begin{proof}
  By the conditional variance formula
  and \eqref{e6.23},
  \begin{align*}
    \BV \hat{F}&=\BV F^*-\BE\BV[F^*\mid \partial\eta]\\
    &=\int f(x)^2\,\lambda(dx)-\BE\BV\bigg[\int
      f(x)\,\eta_{[\eta]}(dx)+\int
      f(x)\,\partial\eta(dx) \Bigm\vert \partial\eta\bigg]\\
    &=\int f(x)^2\,\lambda(dx)
    -\BE\BV\bigg[\int f(x)\,\eta_{[\eta]}(dx) \Bigm\vert \partial\eta\bigg].
  \end{align*}
  By Theorem~\ref{tstopping1} and since $\bH_x(\eta)=\I\{x\in[\eta]\}$, 
  \begin{displaymath}
    \BE\BV\bigg[\int f(x)\,\eta_{[\eta]}(dx) \Bigm\vert\partial\eta\bigg]
    =\BE\int f(x)^2\bH_x(\eta)\,\lambda(dx). \qedhere
  \end{displaymath}
\end{proof}

By \eqref{estimatorKS} and under a suitable moment assumption, the
isometry property of Kabanov--Skorohod integrals from \cite[Theorem~5]{Last16}
yields that
\begin{displaymath}
  \BE \deltaKS(fH)^2=\BE \int f(x)^2 H_x(\eta)^2\,\lambda(dx)
  +\BE \int \big(D_y (f(x)H_x(\eta))\big)
  \big(D_x (f(y)H_y(\eta))\big)\,\lambda^2(d(x,y)).  
\end{displaymath}
The second integrand equals $f(x)f(y)D_yH_x(\eta)D_xH_y(\eta)$ and so
vanishes by Lemma~\ref{lemma:cyclic-bis}. In view of
$\BE\deltaKS(fH)=0$, this provides an alternative proof of the
variance formula \eqref{e3.47}.

In the sequel, \eqref{e3.47} is usually written as 
\begin{displaymath}
  \BV \hat{F}=\int f(x)^2\BE H_x(\eta)\,\lambda(dx).
\end{displaymath}
By the polarisation identity, \eqref{e3.47} yields the following result,
which can be alternatively derived directly from the Mecke equation.

\begin{proposition}
  \label{prop:two-functions}
  If $f,g\in L^1(\lambda)\cap L^2(\lambda)$, then
  \begin{displaymath}
    \BC\big[\hat{F}(\eta,f),\hat{F}(\eta,g)\big]
    =\int f(x)g(x)\BE H_x(\eta)\,\lambda(dx).
  \end{displaymath}
\end{proposition}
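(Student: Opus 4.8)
The plan is to obtain the covariance formula for $\hat F(\eta,f)$ and $\hat F(\eta,g)$ by polarising the variance identity \eqref{e3.47} of Theorem~\ref{tvariance}. Since $\hat F$ depends linearly on $f$ (this is visible from \eqref{defestimator}, as $F^*=\int f\,d\eta$ is linear in $f$ and conditional expectation is linear), we have $\hat F(\eta,f+g)=\hat F(\eta,f)+\hat F(\eta,g)$, and likewise the map $f\mapsto\hat F(\eta,f)$ respects scalar multiples. Hence for $f,g\in L^1(\lambda)\cap L^2(\lambda)$ both $\hat F(\eta,f)$ and $\hat F(\eta,g)$ are square integrable by Theorem~\ref{tvariance}, and so is $\hat F(\eta,f+g)=\hat F(\eta,f+g)$ since $f+g\in L^1(\lambda)\cap L^2(\lambda)$.

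First I would record the polarisation identity
\begin{displaymath}
  \BC\big[\hat F(\eta,f),\hat F(\eta,g)\big]
  =\tfrac12\Big(\BV\hat F(\eta,f+g)-\BV\hat F(\eta,f)-\BV\hat F(\eta,g)\Big),
\end{displaymath}
which is valid for square-integrable random variables. Then I would substitute \eqref{e3.47} three times, once for each of $f+g$, $f$ and $g$, writing $\BV\hat F(\eta,h)=\int h(x)^2\BE H_x(\eta)\,\lambda(dx)$ in the form used after the proof of Theorem~\ref{tvariance}. Expanding $(f(x)+g(x))^2=f(x)^2+2f(x)g(x)+g(x)^2$ inside the integral and using linearity of the integral against $\BE H_x(\eta)\,\lambda(dx)$, the squared terms cancel and one is left with $\int f(x)g(x)\BE H_x(\eta)\,\lambda(dx)$, which is the claimed expression. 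All the integrals here are absolutely convergent because $|f g|\le\tfrac12(f^2+g^2)\in L^1(\lambda)$ and $0\le\BE H_x(\eta)\le 1$.

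Alternatively, as the statement indicates, one may rederive this directly from the Mecke equation via the Kabanov--Skorohod representation \eqref{estimatorKS}: since $\BE\deltaKS(fH)=\BE\deltaKS(gH)=0$, we have $\BC[\hat F(\eta,f),\hat F(\eta,g)]=\BE[\deltaKS(fH)\deltaKS(gH)]$, and the bilinear version of the isometry from \cite[Theorem~5]{Last16} gives $\BE\int f(x)g(x)H_x(\eta)^2\,\lambda(dx)$ plus a cross term $\BE\int f(x)g(y)D_yH_x(\eta)D_xH_y(\eta)\,\lambda^2(d(x,y))$, which vanishes by Lemma~\ref{lemma:cyclic-bis}; since $H_x(\eta)^2=H_x(\eta)$, this again yields $\int f(x)g(x)\BE H_x(\eta)\,\lambda(dx)$. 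I would present the polarisation argument as the main proof since it is shortest and requires no extra moment hypotheses beyond $L^1\cap L^2$.

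I do not anticipate a genuine obstacle here; the only point needing a word of care is the justification that $f\mapsto\hat F(\eta,f)$ is linear and that each of the three variances is finite, so that polarisation is legitimate — but this is immediate from Theorem~\ref{tvariance} applied to $f$, $g$ and $f+g$ (all in $L^1(\lambda)\cap L^2(\lambda)$).
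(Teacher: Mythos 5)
Your proof is correct and takes the same route as the paper: the paper states that Proposition~\ref{prop:two-functions} follows from \eqref{e3.47} by polarisation (and notes the alternative derivation via the Mecke equation), which is precisely the argument you spell out, including the linearity of $f\mapsto\hat F(\eta,f)$ and the finiteness of the three variances needed to make polarisation legitimate.
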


The prime property yields a simpler expression for the variance of
$\hat{F}$. By \cite[Exercise~3.7]{LastPenrose17},
\begin{displaymath}
  \BE H_x(\eta)=\BE \prod_{y\in\eta} H_x(\delta_y)
  =\exp\Big[-\int \bH_x(\delta_y)\lambda(dy)\Big], 
\end{displaymath}
so that 
\begin{displaymath}\label{evarianceprime}
  \BV \hat{F}=\int f(x)^2 e^{-\lambda(\{y:x\in[\delta_y]\})}\,\lambda(dx).
\end{displaymath}

\begin{remark}
  An unbiased estimator of the variance $\BV \hat{F}$ can be constructed as
  \begin{displaymath}
    \hat{V}:=   \int f(x)^2 H_x(\eta)\,\eta(dx)= \int f(x)^2 \, \partial\eta(dx).
  \end{displaymath}
  Indeed, the Mecke equation and Lemma~\ref{lemma:minus-point} yield
  that its expectation equals
  \begin{displaymath}
    \BE \hat{V}=    \BE \int f(x)^2 H_x(\eta+\delta_x)\,\lambda(dx)
    =\BE \int f(x)^2 H_x(\eta)\,\lambda(dx)=\BV \hat{F}. 
  \end{displaymath}
  Using the  Mecke equation and assuming $f\in L^4(\lambda)$, we obtain
  \begin{align*}
    \BE \hat{V}^2=\int f(x)^4\BE  H_x(\eta) \,\lambda(dx)
    +\int f(x)^2f(y)^2\BE\big[H_x(\eta+\delta_y)H_y(\eta+\delta_x)\big] \,\lambda^2(d(x,y)).
  \end{align*}
  If the prime property holds, then the second term on the right-hand side simplifies to
  \begin{align*}
    \int f(x)^2f(y)^2H_x(\delta_y)H_y(\delta_x)
    \BE \big[H_x(\eta)H_y(\eta)\big] \,\lambda^2(d(x,y)).
  \end{align*}
  It is not hard to see (using similar calculations as in the
  Supplement \cite{las:mol24s}) that
  \begin{align*}
    \BE \big[ H_x(\eta)H_y(\eta)\big]
    =\exp\big[-\lambda\big(\{z:H_x(z)=0\}\cup\{z:H_y(z)=0\}\big)\big].
  \end{align*}
  This yields a formula for the variance of $\hat{V}$.
\end{remark}

\begin{remark}
  By the Cauchy--Schwarz inequality,
  \begin{align*}
    \big(\BV \hat{F}\,\big)^2
    \leq \int f(x)^4\,\lambda(dx) \int(\BE H_x(\eta))^2\,\lambda(dx),
  \end{align*}
  with equality if and only if 
  $f(x)=c\BE H_x(\eta)$ for some $c\in\R$ and $\lambda$-a.e.\ $x$.
\end{remark}

\begin{example}
  \label{ex:f=one}
  If $\lambda$ is finite and $f$ identically equals one (so that
  $f\in L^1(\lambda)\cap L^2(\lambda)$), then $F^*=\eta(\BX)$ and
  $\hat{F}=\lambda([\eta])+(\partial\eta)(\BX)$.
  Since $\hat{F}$ is unbiased,
  \begin{math}
    \BE \lambda\big([\eta]^c\big)=\BE\card(\partial\eta). 
  \end{math}
  Moreover,
  \begin{displaymath}
    \BV \hat{F}=\BV\big[\lambda([\eta]^c)-\card(\partial\eta)\big]
    =\BE \int H_x(\eta)\lambda(dx)
    =\BE \lambda([\eta]^c). 
  \end{displaymath}
  Consider, in particular, Example~\ref{ex:chull-oracle} with
  $\lambda$ given as the Lebesgue measure $V_d$ restricted to a convex
  body $K\subset\R^d$.  Then $\BV \hat{F}=\BE V_d(K\setminus [\eta])$,
  as derived in \cite[Theorem~3.2]{BaldinReiss16}. The unbiasedness of
  $F$ implies that $\BE V_d(K\setminus [\eta])$ equals
  $\BE \card(\partial\eta)$, where $\card(\partial\eta)$ denotes the
  cardinality of $\partial\eta$, noting that $\eta$ does a.s.\ not
  have multiple points. This is the well-known Poisson version of  
  Efron's identity for random polytopes, see \cite[Theorem~2]{MR3293497}.
  In difference to \cite{BaldinReiss16},
  our setting includes nondiffuse intensity measures. The case of an
  infinite intensity measure is the subject of the following example.
\end{example}

\begin{example}
  \label{ex:poisson-polytope}
  Let $\lambda$ be the measure on $\R^d\setminus\{0\}$ with density
  $\|x\|^{-\alpha-d}$ with $\alpha>0$. The intensity of $\eta$ has a
  pole at the origin and so the total number of points in $\eta$ is
  infinite. Let $\partial\eta$ be the counting measure giving unit
  weights to each vertex of the convex hull of the support of
  $\eta$. It is well known that $\conv(\supp\eta)$ is a convex
  polyhedron with a finite number of vertices, see
  \cite[Corollary~4.2]{kab:mar:tem:19}. Hence, $\partial\eta$ is
  a.s. finite. In this case, $[\eta]$ is the convex hull of $\eta$
  with the extreme points (vertices) excluded. Let
  $f_\eps(x):=\I\{\|x\|\geq \eps\}$, so that $f_\eps\in L^1(\lambda)\cap L^2(\lambda)$. 
For this function, the  unbiasedness of $\hat{F}$ yields that
  \begin{displaymath}
    \BE \int f_\eps(z)H_z(\eta-\delta_z)\eta(dz)
    =\BE \int f_\eps(z) H_z(\eta)\lambda(dz).
  \end{displaymath}
  Letting $\eps\downarrow 0$ yields that $\BE \lambda([\eta]^c)=\BE
  \card(\partial\eta)$.  
\end{example}

\begin{example}
  \label{ex:two-dim-coordinatewise}
  Let $\BX=[0,1]^2$ and let $\lambda=t V_2$, where $V_2$ is the
  Lebesgue measure on $\BX$ and $t>0$.
  For $\mu\in\bN$ let $\partial\mu=0$ if $\mu=0$.  Otherwise, $\mu$
  contains points $(x_1,y_1)$ and $(x_2,y_2)$ such that $x_1$ is the
  smallest $x$-coordinate of all points from $\mu$ and $y_2$ is the
  smallest $y$-coordinate of all points from $\mu$. Let $\partial\mu$
  be the measure supported at $(x_1,y_1)$ and $(x_2,y_2)$ with the
  multiplicities inherited from $\mu$ if these two points are
  different and if $(x_1,y_1)=(x_2,y_2)$, let $\partial\mu$ be
  supported at this common point with the multiplicity inherited from
  $\mu$.
  For $f\equiv1$, we get
  $\hat{F}=\lambda([\eta])+\card(\partial\eta)$. The unbiasedness of
  $\hat{F}$ yields that
  \begin{displaymath}
    \BE\card(\partial\eta)=\BE\lambda\big([\eta]^c\big)
    =2\big(1-e^{-t}\big)-\frac{1}{t}\big(1-e^{-t}\big)^2.
  \end{displaymath}
\end{example}

Results on moments of Kabanov--Skorohod integrals from
\cite{Privault12} yield formulas for higher order moments of
$\hat{F}-F$, under suitable moment assumptions.  For example,
in view of 
Lemma~\ref{lemma:cyclic-bis}, the
recursive formula \cite[p.~968]{Privault12} yields that
\begin{displaymath}
  \BE\big[(\hat{F}-F)^{n+1}\big]
  =\sum_{k=0}^{n-1} \binom{n}{k}
  \int f(z)^{n-k+1}\BE\big[H_z(\eta)\deltaKS(fH)^k\big]\lambda(dz).
\end{displaymath}
For instance, 
\begin{displaymath}
  \BE\big[\deltaKS(fH)^3\big]=\int f(z)^3\BE H_z(\eta)\lambda(dz)
  +2\int f(z)^2 f(y) \BE\big[\big(D_yH_z(\eta)\big)H_y(\eta)\big]\lambda^2(d(y,z)).
\end{displaymath}

\section{Higher order conditional U-statistics}
\label{secestimatorbi}

We let $\eta$ and $\partial$ be as in Section~\ref{secestimator}.
We take a symmetric function $f\in L^1(\lambda^k)\cap L^2(\lambda^k)$
of $k$ arguments and would like to estimate
\begin{align*}
  F^{(k)}:=\int f(x_1,\dots,x_k)\,\lambda^k(d(x_1,\dots,x_k)).
\end{align*}
Given $n\in\N$ we denote by $\bx=(x_1,\dots,x_n)$ a generic element of
$\BX^n$.  The value of $n$ will always be clear from the context.  We
also write $\delta_{\bx}:=\delta_{x_1}+\cdots+\delta_{x_n}$.

By the multivariate Mecke formula we have
$F^{(k)}=\BE \int f\,d\eta^{(k)}$, where the integration is taken with
respect to the $k$-th factorial measure $\eta^{(k)}$ of $\eta$, see
\cite[Chapter~4]{LastPenrose17}.  Therefore, we define the Poisson
hull estimator as the conditional expectation
\begin{equation}
  \label{eq::hat-F-k}
  \hat{F}^{(k)}:=\BE\bigg[\int
    f(\bx)\,\eta^{(k)}(d\bx)\Bigm\vert \partial\eta\bigg].
\end{equation}
We can use Theorem~\ref{tstopping1} or Proposition~\ref{prop:infinite}
and a similar reasoning as in Lemma~\ref{lemma:hat-F-1}, to arrive at
\begin{align}
  \label{e078}
  \hat{F}^{(k)}
  =\sum_{i=0}^k \binom{k}{i}\iint f(\bx,\by)
  \bH_{\bx}(\eta)
  H_{\by}(\eta)
  \lambda^i(d\bx)\eta^{(k-i)}(d\by),
\end{align}
where 
\begin{align*}
  \bH_{\bx}(\eta):= \prod_{j=1}^n \bH_{x_j}(\eta),\quad
  H_{\bx}(\eta):=\prod_{j=1}^n H_{x_j}(\eta),\quad
  \bx\in\BX^n.
\end{align*}

For $m\in\{1,\ldots,k\}$, define a symmetric function
$f_m\in L^1(\lambda^m)$ by
\begin{align}\label{e5.7}
  f_m(\bx):=\binom{k}{m}\int f(\bx,\by)\,
  \lambda^{k-m}(d\by),\quad \bx\in\BX^m,
\end{align}
where $\int fd\lambda^0:=f$. We shall assume that these functions
$f_m$ are all square integrable.  It is well known (see
\cite{reit:sch13} or \cite[Proposition~12.12]{LastPenrose17}) that
\begin{align}\label{e5.67}
  \BV \int f\,d\eta^{(k)} =\sum^k_{m=1}  m!\int f_m^2\, d\lambda^m.
\end{align}

\begin{theorem}\label{tvariancemult} 
  Suppose $f\in L^1(\lambda^k)$ is symmetric and such that for each
  $m\in\{1,\ldots,k\}$ the function $f_m$ defined by \eqref{e5.7} is
  in $L^2(\lambda^m)$. Then $\hat{F}^{(k)}$ is square integrable and
  \begin{align}
    \label{multvariance}
    &\BV \hat{F}^{(k)}\\ \notag
    &=\sum^k_{m=1} \binom{k}{m}^2 m!
    \iint f(\bx,\by)f(\bx,\bz)
    \BE\left[\prod^m_{i=1}H_{x_i}(\eta+\delta_{\bx}+\delta_{\by}+\delta_{\bz})\right]\,
    \lambda^{2k-2m}(d(\by,\bz))\,\lambda^{m}(d\bx).
  \end{align}
\end{theorem}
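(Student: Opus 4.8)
The plan is to mimic the proof of Theorem~\ref{tvariance}, replacing the single-variable formulas by their factorial-measure analogues and using the spatial strong Markov property to compute the conditional variance. First I would record the conditional variance decomposition
\begin{displaymath}
  \BV \hat F^{(k)}=\BV\int f\,d\eta^{(k)}-\BE\,\BV\Big[\int f\,d\eta^{(k)}\Bigm\vert\partial\eta\Big],
\end{displaymath}
so that, in view of \eqref{e5.67}, the task reduces to evaluating $\BE\,\BV[\int f\,d\eta^{(k)}\mid\partial\eta]$. Conditionally on $\partial\eta$, the strong Markov property \eqref{stopping3} says that $\eta_{[\eta]}$ is a Poisson process with intensity $\lambda_{[\eta]}$, while $\partial\eta=\eta_{[\eta]^c}$ is fixed; writing $\eta=\partial\eta+\eta_{[\eta]}$ and expanding $\eta^{(k)}$ over how many of the $k$ arguments fall into $[\eta]$ versus $[\eta]^c$, the conditional law of $\int f\,d\eta^{(k)}$ is that of a (shifted, inhomogeneous-degree) sum of integrals against the factorial measures of $\eta_{[\eta]}$. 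I would then apply the variance formula \eqref{e5.67} to this conditional Poisson process with intensity $\lambda_{[\eta]}$, which introduces factors $\bH_{x_i}(\eta)=\I\{x_i\in[\eta]\}$ on the integration variables that range over $[\eta]$, and leaves the $\partial\eta$-arguments as an outer factorial integral that the outer expectation, via the multivariate Mecke formula, converts back into a $\lambda$-integral.

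Carrying this out, the $m$-th term of the conditional variance formula \eqref{e5.67} applied to $\eta_{[\eta]}$ is $m!\int (\tilde f_m)^2\,d(\lambda_{[\eta]})^m$ where $\tilde f_m$ is the partial integral of the conditional kernel; restoring the $\partial\eta$-variables and taking $\BE[\cdot]$ turns the square $\tilde f_m(\bx)^2$ into the cross term $f(\bx,\by)f(\bx,\bz)$ with $\by,\bz$ the two independent copies of the remaining $k-m$ arguments integrated against $\lambda^{2k-2m}$, each carrying an indicator $\bH$ that a point lies in $[\eta+\delta_{\bx}+\delta_{\by}+\delta_{\bz}]$ — here one uses that adding the finitely many points $\bx,\by,\bz$ does not change membership in the hull for points already forced outside, together with Lemma~\ref{lemma:minus-point} and (H4) to rewrite $\bH_{x_i}(\eta)$ consistently after the Mecke shift. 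The combinatorial bookkeeping of which of the $k$ slots are "inside" and which are "outside", and the resulting binomial coefficients, is what produces the $\binom{k}{m}^2$ and the $\lambda^m(d\bx)$ versus $\lambda^{2k-2m}(d(\by,\bz))$ split; one should check that the $m=0$ contribution of $\BV[\int f\,d\eta^{(k)}]$ exactly cancels against the $i=k$ part of $\BE\,\BV[\cdots\mid\partial\eta]$, just as in the proof of Theorem~\ref{tvariance} the $\partial\eta$-integral drops out of the conditional variance. Square integrability of $\hat F^{(k)}$ follows from Jensen's inequality once $\int f\,d\eta^{(k)}\in L^2$, which is exactly the hypothesis that each $f_m\in L^2(\lambda^m)$ via \eqref{e5.67}.

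The main obstacle I anticipate is the bookkeeping in the Mecke-type manipulation: after conditioning, one must pull the $\partial\eta$-arguments out of the factorial integral $\eta^{(k)}$, apply the conditional variance formula only to the genuinely "inside" arguments, and then re-insert the $\partial\eta$-arguments and use the multivariate Mecke formula to average over $\partial\eta$; keeping track of multiplicities (so that $\eta^{(k)}$, not $\eta^k$, appears) and of the fact that the indicator functions $\bH$ and $H$ must be evaluated at the full configuration $\eta+\delta_{\bx}+\delta_{\by}+\delta_{\bz}$ rather than at $\eta$ requires careful use of Lemma~\ref{lemma:minus-point}, equation \eqref{e3.8}, and property (H4) to see that all the relevant $\bH$-factors are unchanged under the point additions forced by Mecke. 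A clean way to organize this is to first prove the identity for $\eta$ replaced by a finite Poisson process and $f$ bounded with bounded support, where all integrals are finite sums and Fubini is unproblematic, and then pass to the general case by the localising sequence $B_n\uparrow\BX$ and dominated convergence using the $L^2$ assumptions on the $f_m$; alternatively one can invoke directly the representation \eqref{e078} together with the multivariate Mecke formula and the hull identities to expand $\BE[(\hat F^{(k)})^2]$ and $(\BE\hat F^{(k)})^2=(F^{(k)})^2$ and subtract, which avoids conditioning altogether at the cost of a longer but entirely mechanical computation.
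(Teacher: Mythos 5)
Your proposal follows the same architecture as the paper's proof in the supplement: conditional variance decomposition, strong Markov property to condition on $\partial\eta$, expansion of $\eta^{(k)}$ over ``inside'' and ``outside'' slots, U-statistic moment formulas, and the multivariate Mecke equation to turn the outer $\partial\eta$-integration back into a $\lambda$-integral. A few places deserve more care than the sketch gives them. After decomposing $\int f\,d\eta^{(k)}$ as a sum over $i$ of degree-$i$ integrals against $(\eta_{[\eta]})^{(i)}$ with $\partial\eta$-dependent kernels, evaluating the conditional variance requires not a single application of \eqref{e5.67} but the corresponding \emph{covariance} formula between Poisson U-statistics of different orders $i,j$; the paper derives this from Proposition~12.11 and Corollary~12.8 of \cite{LastPenrose17}, giving a double sum over $(i,j)$. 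Then, to combine the two factorial integrals $(\partial\eta)^{(k-i)}$ and $(\partial\eta)^{(k-j)}$ into a single one before applying Mecke, one needs a purely combinatorial identity about factorial measures partitioned by how many arguments coincide; your sketch only hints at this multiplicity bookkeeping, yet it is exactly what produces the clean $\binom{k}{m}^2 m!$ coefficient. Finally, the rewriting of the $\bH$- and $H$-factors at the shifted configuration $\eta+\delta_{\bx}+\delta_{\by}+\delta_{\bz}$ is correctly flagged, and your cited tools (Lemma~\ref{lemma:minus-point} and (H4)) are the right ones, but the paper isolates this as a standalone lemma whose proof also uses the implication (i)$\Rightarrow$(iii) of Lemma~\ref{lemma:T}; without it the indicator manipulations are not obviously legal. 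None of these are wrong turns, just real work your outline defers, and the overall route is the one the paper actually takes. Your proposed alternative of expanding $\BE[(\hat F^{(k)})^2]$ directly from \eqref{e078} via Mecke is also viable but is not what the paper does.
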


The proof of Theorem~\ref{tvariancemult} is given in the supplement to
this paper.  If $\partial$ is the identity map, then $H\equiv 1$ and
\eqref{multvariance} turns into the variance formula \eqref{e5.67}.
If $k=1$, \eqref{multvariance} simplifies to \eqref{e3.47}. For
further illustration we state the case $k=2$.

\begin{corollary}
  \label{tvariancebi} 
  Suppose $f\in L^1(\lambda^2)\cap L^2(\lambda^2)$ is symmetric and
  satisfies
  \begin{align*} 
    \int \bigg(\int f(x,y)\,\lambda(dy)\bigg)^2\,\lambda(dx)<\infty.
  \end{align*}
  Then $\hat{F}^{(2)}$ is square integrable and
  \begin{align*}
    \BV \hat{F}^{(2)}
    &=2\int f(x,y)^2\BE\big[H_x(\eta+\delta_y)H_y(\eta+\delta_x)\big]\,\lambda^2(d(x,y))\\
    &\qquad +4\int f(x,y)f(x,z)\BE\big[H_x(\eta+\delta_y+\delta_z)\big]\,\lambda^3(d(x,y,z)).
  \end{align*}
\end{corollary}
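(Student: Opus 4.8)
The plan is to derive Corollary~\ref{tvariancebi} as a direct specialization of Theorem~\ref{tvariancemult} with $k=2$, so the work is purely bookkeeping rather than a fresh argument. First I would check the hypotheses: with $k=2$, the auxiliary functions of \eqref{e5.7} are $f_2(\bx)=f(x_1,x_2)$ (square integrable since $f\in L^2(\lambda^2)$) and $f_1(x)=2\int f(x,y)\,\lambda(dy)$, whose square integrability is exactly the assumed finiteness of $\int(\int f(x,y)\,\lambda(dy))^2\,\lambda(dx)$. Hence the hypotheses of Theorem~\ref{tvariancemult} are met, and square integrability of $\hat{F}^{(2)}$ follows immediately.

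Next I would write out the sum $\sum_{m=1}^{2}$ in \eqref{multvariance}. For $m=2$ the combinatorial prefactor is $\binom{2}{2}^2\cdot 2!=2$, the vectors $\by,\bz$ are empty (so $2k-2m=0$ and the inner $\lambda^{0}$-integral is trivial), and $\bx=(x,y)$, giving the term $2\int f(x,y)^2\,\BE[H_x(\eta+\delta_x+\delta_y)H_y(\eta+\delta_x+\delta_y)]\,\lambda^2(d(x,y))$. Here I would invoke Lemma~\ref{lemma:minus-point} (equivalently $H_x(\mu-\delta_x)=H_x(\mu)$) to replace $H_x(\eta+\delta_x+\delta_y)$ by $H_x(\eta+\delta_y)$ and $H_y(\eta+\delta_x+\delta_y)$ by $H_y(\eta+\delta_x)$, producing the first displayed term of the corollary. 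For $m=1$ the prefactor is $\binom{2}{1}^2\cdot 1!=4$, $\bx=(x)$ is a single point, and $\by=(y)$, $\bz=(z)$ are single points with $2k-2m=2$; the term is $4\int f(x,y)f(x,z)\,\BE[H_x(\eta+\delta_x+\delta_y+\delta_z)]\,\lambda^2(d(y,z))\,\lambda(dx)$, and again Lemma~\ref{lemma:minus-point} removes the $\delta_x$ to give $\BE[H_x(\eta+\delta_y+\delta_z)]$, matching the second term. Relabelling the product measure $\lambda^2(d(y,z))\,\lambda(dx)$ as $\lambda^3(d(x,y,z))$ completes the identification.

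The only genuine subtlety — and the step I would flag as the main obstacle, though it is minor — is justifying the replacement $H_x(\eta+\delta_x+\cdots)=H_x(\eta+\cdots)$ inside the expectations: this is the content of Lemma~\ref{lemma:minus-point} applied with $\mu=\eta+\delta_x+\delta_y$ (and $\mu=\eta+\delta_x+\delta_y+\delta_z$ in the $m=1$ case), but one should note that the lemma is stated for deterministic $\mu$ containing $x$, so it applies pointwise to the integrand for $\lambda$-a.e.\ (and $\BP$-a.s.)\ choice of the parameters, which is all that is needed under the integral sign. Everything else — expanding the binomial sum, recognizing empty index vectors, and combining product measures — is routine, so I would present the proof in a few lines, citing Theorem~\ref{tvariancemult} and Lemma~\ref{lemma:minus-point} and leaving the arithmetic to the reader.
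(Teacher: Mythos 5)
Your proposal is correct and takes the only sensible route, which is exactly what the paper implicitly expects: Corollary 6.2 is stated as a direct specialization of Theorem 6.1 to $k=2$, with the combinatorial prefactors $\binom{2}{2}^2\,2!=2$ and $\binom{2}{1}^2\,1!=4$ and the simplification $H_x(\eta+\delta_x+\cdots)=H_x(\eta+\cdots)$ coming from Lemma 2.4, and the paper gives no separate proof. One very small remark: the caveat you raise about Lemma 2.4 applying only ``for $\lambda$-a.e.\ parameters'' is more cautious than necessary, since that lemma holds for every $\mu\in\bN$ and every $x\in\mu$, hence for every realization of $\eta$ and all $x,y,z$, so no exceptional-set argument is actually needed.
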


Assuming that the generator satisfies the prime property,
$H_{x_i}(\eta+\delta_{\bx}+\delta_{\by}+\delta_{\bz})$ factorises into
a product, and so the variances of higher order conditional
symmetric statistics are given by
\begin{displaymath}
  \BV \hat{F}^{(k)} =\sum^k_{m=1}  m!\int
  \tilde{f}_m^2(\bx)\BE \big[H_{\bx}(\eta)\big] H_{\bx}(\delta_{\bx})
  \, \lambda^m(d\bx), \quad k\in\N,
\end{displaymath}
where
\begin{displaymath}
  \tilde{f}_m(\bx):=\binom{k}{m}\int f(\bx,\by)
  H_{\bx}(\delta_{\by})\,
  \lambda^{k-m}(d\by),\quad \bx\in\BX^m.
\end{displaymath}

\begin{example}
  Consider higher order integrals in the setting of
  Example~\ref{ex:chull-oracle}.
  If $f\equiv1$ and $\lambda=V_d$ is the Lebesgue measure on $K$, then
  $F^{(2)}=V_d(K)^2$.  Recall that $[\eta]$ is the convex hull $Z$ of
  $\supp\eta$ with the vertices removed.  The Poisson hull
  estimator \label{eq:F^2} of $V_d(K)^2$ becomes
  \begin{displaymath}
    \hat{F}^{(2)} = V_d(Z)^2 +2 V_d(Z) M+M(M-1),
  \end{displaymath}
  where $M$ is the number of vertices of $Z$.  The Poisson hull
  estimator \eqref{e078} of $V_d(K)^k$ is given by
  \begin{displaymath}
    \hat{F}^{(k)}
    =\sum_{i=0}^k \binom{k}{i} V_d(Z)^i\frac{M!}{(M-k+i)!}. 
  \end{displaymath}
  The unbiasedness of the estimator $\hat{F}^{(k)}$ means that
  \begin{displaymath}
    \sum_{i=0}^k \binom{k}{i} \BE \Big[V_d(Z)^i \frac{M!}{(M-k+i)!}\Big] 
    = V_d(K)^k,
  \end{displaymath}
  which is apparently a new result concerning the joint moments of
  the volume of the convex hull and the number of vertices.

  Let $f(x,y)=\|x-y\|^j$ for $j>-d$. If $\lambda$ is the
  Lebesgue measure on $K$, then
  \begin{displaymath}
    F^{(2)}=\int_K\|x-y\|^j\, \lambda^2(d(x,y))
    =\frac{2}{(d+j)(d+j+1)} I_{d+j+1}(K),
  \end{displaymath}
  where $I_n(K)$ is the chord power integral of order $n$, see
  \cite[p.~364]{sch:weil08}. The unbiased estimator of $F^{(2)}$ is 
  \begin{displaymath}
    \hat{F}^{(2)}=\frac{2}{(d+j)(d+j+1)} I_{d+j+1}(Z)
    +2 \sum_{y\in\partial\eta} \int_Z\|x-y\|^j \,dx
    +\sum_{x,y\in\partial\eta, x\neq y} \|x-y\|^j.
  \end{displaymath}
\end{example}

\section{Normal approximation}
\label{sec:centr-limit-theor}

Consider the estimator $\hat{F}$ from Section~\ref{secestimator} with
$f\in L^1(\lambda)\cap L^2(\lambda)$.  In order to formulate a central
limit theorem, consider Poisson processes $\eta_t$ with intensity
measure $t\lambda$ for $t>0$ and define
$\hat{F}_t:=\hat{F}(\eta_t)$. By \eqref{e3.47},
\begin{align*}
  \sigma_t^2:=\BV \hat{F}_t=t\int f(z)^2 \BE H_z(\eta_t)\,\lambda(dz). 
\end{align*}
Our aim is to derive the limit distribution of
$(\hat{F}_t-tF)/\sigma_t$ as $t\to\infty$. For this, additional
conditions are necessary, for instance, the central limit theorem does
not hold if the cardinality of $\partial\eta_t$ does not grow to
infinity, like in Example~\ref{ex:two-dim-coordinatewise}.

Along with assuming that $f\in L^1(\lambda)\cap L^2(\lambda)$, we 
additionally impose the following integrability conditions:
\begin{align}
  &\int f(y)^2 \BE \big(D_xH_y(\eta)\big)^2
    \,\lambda(d(x,y))<\infty, \label{eq:int2}\\
  &\int f(y)^2 \BE\big(D_{x,z}^2H_y(\eta)\big)^2
    \,\lambda(d(x,y))<\infty, \label{eq:int3}\\
  &\int f(y)^2 \BE \big(D_{x,z,w}^3 H_y(\eta)\big)^2
    \,\lambda(d(y,z,w))<\infty, \;\; \lambda\text{-a.e.}\; x, \label{eq:int4}
\end{align}
where the expressions for successive differences can be found at
\eqref{eq:16}. Since $H_y(\eta)$ takes values $0$ or $1$, these
conditions follow from the square integrability of $f$ if the
intensity measure $\lambda$ is finite. 

We denote by $d_W(X,Y)$ the {\em Wasserstein distance} between the
distributions of two random variables $X$ and $Y$, see, e.g.,
\cite{LastPenrose17}. The following result is proved in the supplement
as an application of a central limit theorem for Kabanov--Skorohod
integrals from \cite{las:mol:sch20}. We maintain the
notation 
used in the cited paper. Denote by $N$ the standard Gaussian random
variable.

\begin{theorem}
  \label{theorem:W-distance}
  Assume that $f\in L^1(\lambda)\cap L^2(\lambda)$ and that the
  conditions \eqref{eq:int2}, \eqref{eq:int3} and \eqref{eq:int4} are
  satisfied. Let $t>0$ be such that $\sigma_t^2>0$. Then
  \begin{equation}
    \label{eq:10}
    d_W\Big(\frac{\hat{F}_t-tF}{\sigma_t},N\Big)
    \leq T_1(t)+T_3(t)+T_4(t)+T_5(t),
  \end{equation}
  where 
  \begin{align*}
    T_1(t)&:=t^{3/2}\sigma_t^{-2}\bigg(\int f(x)^2f(y)^2
            \BE\Big[ H_x(\eta_t)H_y(\eta_t)\bH_x(\eta_t+\delta_z)
            \bH_y(\eta_t+\delta_z)\Big] \,\lambda^3(d(x,y,z))\bigg)^{1/2},\\
    T_3(t)&:=t\sigma_t^{-3}\int |f(x)|^3\BE H_x(\eta_t)\,\lambda(dx),\\
    T_4(t)&:=t^2\sigma_t^{-3} \int \Big(3|f(x)|f(y)^2+2f(x)^2|f(y)|\Big)
            \BE\Big[H_y(\eta_t)\bH_y(\eta_t+\delta_x)\Big]\,\lambda^2(d(x,y)),\\
    T_5(t)&:=8 t^3\sigma_t^{-3}\int |f(x)f(y)f(z)|
            \BE\Big[H_z(\eta_t)\bH_z(\eta_t+\delta_y)
    \bH_y(\eta_t+\delta_x)\Big]\,\lambda^3(d(x,y,z)).
  \end{align*}
\end{theorem}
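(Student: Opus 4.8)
The plan is to reduce the statement to the general central limit theorem for Kabanov--Skorohod integrals established in \cite{las:mol:sch20}, applied to the integrand $G_t(z,\mu):=f(z)H_z(\mu)$ with respect to $\eta_t$. By \eqref{estimatorKS} we have $\hat{F}_t-tF=\deltaKS(f H)$ with $\eta$ replaced by $\eta_t$, so the left-hand side of \eqref{eq:10} is a Wasserstein distance between a normalised Kabanov--Skorohod integral and $N$; the cited theorem bounds exactly this quantity by a sum of terms $T_1,\dots,T_5$ built from first, second and third order difference operators applied to $G_t$. The first step is therefore to verify that the integrability hypotheses of that theorem hold: these are precisely the role of \eqref{eq:int2}, \eqref{eq:int3} and \eqref{eq:int4} (after multiplying by $f$ and noting $|D^j H_y|$ is bounded), together with $f\in L^1(\lambda)\cap L^2(\lambda)$ which gives $\deltaKS(fH)\in L^2$ via the isometry discussed after Theorem~\ref{tvariance}.

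The second step is the bookkeeping that turns the abstract bound of \cite{las:mol:sch20} into the concrete expressions for $T_1,T_3,T_4,T_5$. Here one substitutes the explicit formulas \eqref{e3.8} and \eqref{eq:16} for the difference operators of $H$: namely $D_xH_z(\mu)=-H_z(\mu)\bH_z(\mu+\delta_x)$, and the analogous second-order expression. The key simplification is that many of the ``mixed'' terms that appear in the general bound vanish: by Lemma~\ref{lemma:cyclic-bis} any cyclic product $D_{z_1}H_{z_2}(\eta)\cdots D_{z_m}H_{z_1}(\eta)$ is identically zero, which is exactly what kills the term that would otherwise be denoted $T_2(t)$ (the one involving a cyclic pairing of two difference operators), leaving only $T_1,T_3,T_4,T_5$. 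After that, each surviving term is rewritten by expanding $|D_xH_y|=H_y(\eta)\bH_y(\eta+\delta_x)$ and collecting the resulting products of $H$'s and $\bH$'s into the displayed integrals; the factors $t^{3/2},t,t^2,t^3$ and the powers of $\sigma_t$ come directly from the scaling $\eta\rightsquigarrow\eta_t$, $\lambda\rightsquigarrow t\lambda$, and the normalisation by $\sigma_t$. The combinatorial constants ($3|f(x)|f(y)^2+2f(x)^2|f(y)|$ in $T_4$, the factor $8$ in $T_5$) are just the multiplicities with which each pattern occurs in the general formula.

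The main obstacle is bridging the notational and hypothesis gap between the general theorem in \cite{las:mol:sch20} and the present setting: one must check that the specific moment conditions assumed there (typically phrased in terms of $L^2$-type norms of $D G$, $D^2 G$, $D^3 G$ against product intensity measures) are implied by \eqref{eq:int2}--\eqref{eq:int4}, and that the pathwise identity $\hat F_t-tF=\deltaKS(fH)$ upgrades to an $L^2$ identity so that the isometry and the CLT bound may be invoked. Since the difference operators of $H$ are uniformly bounded by $1$, these verifications are essentially routine once the right bound from the companion paper is identified, and the only genuine input beyond that paper is the vanishing-of-cyclic-products fact from Lemma~\ref{lemma:cyclic-bis} that eliminates the $T_2$ contribution. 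Because the detailed estimates are mechanical, I would relegate them to the supplement \cite{las:mol24s}, as the authors indeed announce.
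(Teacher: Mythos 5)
Your proposal follows exactly the same strategy as the paper's supplementary proof: apply Corollary 2.2 of \cite{las:mol:sch20} to the integrand $G(x,\mu)=f(x)H_x(\mu)/\sigma_t$ (using the representation \eqref{estimatorKS}), verify the integrability hypotheses via \eqref{eq:int2}--\eqref{eq:int4}, and then simplify the resulting difference-operator expressions using \eqref{e3.8}, \eqref{eq:16} and \eqref{eq:17}. Your observation that the cyclic-product vanishing of Lemma~\ref{lemma:cyclic-bis} is what removes the would-be $T_2$ term is the correct explanation for the skipped index, and the term-by-term bookkeeping you defer as mechanical (e.g.\ identities like $H_xH_y\bH_y(\eta+\delta_x)=H_y\bH_y(\eta+\delta_x)$ and the simplification of $A_t B_t$ in $T_5$) is precisely what occupies the paper's supplementary proof.
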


\begin{remark}
  \label{rem:CLT-general-functionals}
  The rate given in Theorem~\ref{theorem:W-distance} applies also to
  the Kabanov--Skorohod integral of any function given by the product
  of a function of $x$ and a function $H_x(\eta)$ with values
  $\{0,1\}$ which is decreasing and such that the corresponding
  function $\bH_x(\eta)=1-H_x(\eta)$ satisfies \eqref{eq:17}.
\end{remark}

In the prime setting the terms involved in the bound on the Wasserstein
distance and the required integrability conditions become
simpler and lead to the following corollary proved in the
supplementary material. Denote
\begin{equation}
  \label{eq:h_i}
  h_i(y):=\int |f(x)|^i\bH_y(\delta_x)\,\lambda(dx),\quad y\in\BX,\; i=0,1,2.
\end{equation}

\begin{corollary}
  \label{cor:normal-prime}
  Assume that $f\in L^1(\lambda)\cap L^2(\lambda)$ and
  \begin{equation}
    \label{eq:int-prime}
    \int f(y)^2 h_0(y)^2 \BE H_y(\eta) \,\lambda(dy)<\infty.
  \end{equation}
  If the prime property \eqref{eq:6} is satisfied, then \eqref{eq:10}
  holds with $T_3(t)$ as in Theorem~\ref{theorem:W-distance} and 
  \begin{align*}
    T_1(t)&=t^{3/2}\sigma_t^{-2} \bigg(
            \int f(x)^2f(y)^2\BE\big[H_x(\eta_t) H_y(\eta_t)\big]\bH_x(\delta_z)
    \bH_y(\delta_z)\,\lambda^3(d(x,y,z))\bigg)^{1/2},\\
    T_4(t)&=t^2\sigma_t^{-3}\int \Big(3 h_1(y)f(y)^2+2h_2(y)|f(y)|\Big)
            \BE H_y(\eta_t)\,\lambda(dy),\\
    T_5(t)&=8t^3\sigma_t^{-3}\int |f(z)|h_1(z)^2\BE H_z(\eta_t)
            \,\lambda(dz).
  \end{align*}
\end{corollary}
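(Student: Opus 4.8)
The plan is to deduce the corollary from Theorem~\ref{theorem:W-distance}: under the prime property the integrability hypotheses \eqref{eq:int2}, \eqref{eq:int3} and \eqref{eq:int4} will be shown to follow from \eqref{eq:int-prime} together with $f\in L^2(\lambda)$, and then each of $T_1(t)$, $T_4(t)$, $T_5(t)$ will be rewritten (or bounded above) by the stated simpler expression, leaving $T_3(t)$ untouched; since each new term dominates the corresponding term of the theorem, \eqref{eq:10} survives. The basic computational input is that under \eqref{eq:6} equation \eqref{e3.8} reduces to $D_xH_y(\mu)=-H_y(\mu)\bH_y(\delta_x)$, whence, iterating the difference operator, $D^m_{x_1,\dots,x_m}H_y(\mu)=(-1)^mH_y(\mu)\prod_{i=1}^m\bH_y(\delta_{x_i})$; as $H_y$ and $\bH_y$ are $\{0,1\}$-valued, the square of this difference equals $H_y(\mu)\prod_{i=1}^m\bH_y(\delta_{x_i})$. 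Combined with $\BE H_y(\eta)=\exp[-h_0(y)]$ (Exercise~3.7 in \cite{LastPenrose17}, recalling $h_0(y)=\int\bH_y(\delta_x)\,\lambda(dx)$) and the elementary bound $h_0e^{-h_0}\le e^{-1}$, integrating over the auxiliary variables turns \eqref{eq:int2} and \eqref{eq:int4} into consequences of $f\in L^2(\lambda)$ (and, for \eqref{eq:int4}, of \eqref{eq:int-prime}), while \eqref{eq:int3} becomes \emph{exactly} \eqref{eq:int-prime}.

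For $T_1$ and $T_4$ I will use the pointwise identity $H_x(\mu)\bH_x(\mu+\delta_z)=H_x(\mu)\bH_x(\delta_z)$, which is just $-D_zH_x(\mu)$ in its prime form and is checked by splitting on $\bH_x(\mu)\in\{0,1\}$. Applying it to both factors in the integrand of $T_1$ shows it equals $H_x(\eta_t)H_y(\eta_t)\bH_x(\delta_z)\bH_y(\delta_z)$; since $\bH_x(\delta_z)\bH_y(\delta_z)$ is deterministic it factors out of the expectation, producing the stated $T_1$ with equality. Applying it once in $T_4$ gives $\BE[H_y(\eta_t)\bH_y(\eta_t+\delta_x)]=\bH_y(\delta_x)\BE H_y(\eta_t)$; carrying out the $x$-integration and recognising $\int|f(x)|\bH_y(\delta_x)\,\lambda(dx)=h_1(y)$ and $\int f(x)^2\bH_y(\delta_x)\,\lambda(dx)=h_2(y)$ yields the stated $T_4$, again with equality.

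The term $T_5$ is where the only real work lies, and it is where the order-theoretic description of Remark~\ref{rem:prime} enters. Write $y\prec x$ for $\bH_y(\delta_x)=1$; under \eqref{eq:nontrivial} this is a strict partial order, in particular transitive, and $\bH_y(\mu)=\max_{w\in\mu}\bH_y(\delta_w)$. On $\{H_z(\eta_t)=1\}$ one has $\bH_z(\eta_t)=0$, so $\bH_z(\eta_t+\delta_y)=1$ forces $z\prec y$; moreover no $w\in\eta_t$ satisfies $z\prec w$, so if some $w\in\eta_t$ had $y\prec w$ then transitivity would give $z\prec w$, a contradiction, whence $\bH_y(\eta_t)=0$ and $\bH_y(\eta_t+\delta_x)=1$ forces $y\prec x$; transitivity then also gives $z\prec x$. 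Checking the converse inclusion as well, this establishes
\begin{displaymath}
  H_z(\eta_t)\bH_z(\eta_t+\delta_y)\bH_y(\eta_t+\delta_x)
  =H_z(\eta_t)\bH_z(\delta_y)\bH_y(\delta_x),
\end{displaymath}
so the expectation in the original $T_5$ equals $\bH_z(\delta_y)\bH_y(\delta_x)\BE H_z(\eta_t)$. Integrating over $x$ turns $\int|f(x)|\bH_y(\delta_x)\,\lambda(dx)$ into $h_1(y)$, and a second use of transitivity — namely $z\prec y$ implies $\{x:y\prec x\}\subseteq\{x:z\prec x\}$, hence $\bH_z(\delta_y)h_1(y)\le\bH_z(\delta_y)h_1(z)$ — bounds the remaining $y$-integral by $h_1(z)\int|f(y)|\bH_z(\delta_y)\,\lambda(dy)=h_1(z)^2$, giving the upper bound by the stated $T_5$. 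I expect this double appeal to transitivity to be the main obstacle; everything else is bookkeeping. Collecting the three estimates (with equality for $T_1,T_4$ and the displayed bound for $T_5$) and leaving $T_3$ as is, the inequality \eqref{eq:10} holds with the new terms, which proves the corollary.
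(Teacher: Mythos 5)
Your proof is correct and follows the same overall strategy as the paper's (apply Theorem~\ref{theorem:W-distance} and rewrite each term using the prime reduction $D_xH_y(\mu)=-H_y(\mu)\bH_y(\delta_x)$), so the integrability checks and the simplifications of $T_1$ and $T_4$ match the published argument. The paper's own proof of this corollary is terse: it asserts that the new terms are ``easily derived \dots\ by noticing that $H_x(\eta_t)\bH_x(\eta_t+\delta_z)=H_x(\eta_t)\bH_x(\delta_z)$,'' which handles $T_1$ and $T_4$ directly but does not by itself explain $T_5$, where the factor $\bH_y(\eta_t+\delta_x)$ is not paired with $H_y(\eta_t)$. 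Your order-theoretic argument — using transitivity of $\prec$ twice, once to force $\bH_y(\eta_t)=0$ on the event $\{H_z(\eta_t)=1,\bH_z(\delta_y)=1\}$ and once to get the monotonicity $\bH_z(\delta_y)h_1(y)\le\bH_z(\delta_y)h_1(z)$ — fills exactly that gap, and is in the spirit of Remark~\ref{rem:prime}. Two small remarks: transitivity of $\prec$ follows from (H4) alone, but the irreflexivity you implicitly use (namely that $\bH_y(\delta_y)=0$) does rely on \eqref{eq:nontrivial}, so you should say that you assume the generator is nontrivial, as the corollary tacitly does. Also, your bound $h_0e^{-h_0}\le e^{-1}$ for \eqref{eq:int2} is cleaner than what the paper writes (the paper's inequality $h_0\le\min(1,h_0^2)$ is a typo for $\max$); either way the conclusion is the same.
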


\begin{remark}
  \label{rem:Kolm}
  Assume that $\partial$ satisfies \eqref{eq:nontrivial}, that the
  prime property holds, and recall from Remark~\ref{rem:prime} the
  relationship between the function $H_x$ and a strict partial order.
  Then a result from \cite[Proposition~7.1]{las:mol:sch20} yields not
  only simple expressions (identical to those given in
  Corollary~\ref{cor:normal-prime} above) for the Wasserstein but also
  for the Kolmogorov distance, that is, the uniform distance between the
  cumulative distribution functions of $\sigma_t^{-1}(\hat{F}_t-tF)$
  and $N$.
\end{remark}

\section{Poisson processes on function spaces}
\label{sec:poiss-proc-funct}

\subsection{General setting}
\label{sec:general-setting}

In this section we assume that $\BX$ is a subset of the space of
measurable functions $x:\BS\to[-\infty,\infty)$ on a locally compact
Polish space $\BS$ such that $x(s)\geq \varpi(s)$ for all $s\in\BS$,
where $\varpi\colon\BS\to[-\infty,\infty)$ is a function which is
continuous on the set $\{s\in\BS:\varpi(s)>-\infty\}$ and the latter
set is assumed to be open (therefore, $\varpi$ is lower
semicontinuous). In many cases it is possible to let
$\varpi(s)=-\infty$ for all $s$.  We equip $\BX$ with the smallest
$\sigma$-field $\cX$, such that the mappings $x\mapsto x(r)$ are
$\cX$-measurable for each $r\in\BS$.
The space $\BX$ and the localising ring
$\cX_0\subset\cX$ are assumed to have the following properties.

\begin{enumerate}
\item[(F0)] $(\BX,\cX)$ is a Borel space.
\item [(F1)] For each $x\in\BX$, 
  the set $\{r\in\BS:x(r)>\varpi(r)\}$
  is 
  open, and $x$ is continuous on it.
\item [(F2)] If, for $x,x'\in \BX$ and an open set $U\subset\BS$, the
  set $\{r\in U:x(r)>\varpi(r),x'(r)>\varpi(r)\}$ is nonempty and
  the functions $x$ and $x'$ coincide on this set, then
  $x=x'$.
\item [(F3)] If $\varpi\in\BX$, then $\{\varpi\}\in\cX_0$. Moreover,
  for each compact set $K\subset \BS$, each $\eps>0$ and each $c\in\R$
  we have
  \begin{align*}
    \big\{x\in \BX: \text{$x(r)\geq \varpi_{c,\eps}(r)$
    for some $r\in K$}\big\}\in\cX_0,
  \end{align*}
  where
  $\varpi_{c,\eps}(r):=\I\{\varpi(r)=-\infty\}c
  +\I\{\varpi(r)>-\infty\}(\varpi(r)+\eps)$.  Conversely, any set 
  $A\in\cX_0$ with $\varpi\notin A$ is contained in one of these sets.
\end{enumerate}

For a counting measure $\mu$ on the functional space $\BX$, write
$\sup \mu$ for the function obtained as the pointwise supremum of
functions $x$ for $x\in\mu$, and let $\sup\mu=\varpi$ if $\mu=0$.  For
$x\in\BX$, denote $\mu_{-x}:=\mu-\mu(\{x\})\delta_x$.  Define the
generator of $\mu\in\bN$ by
\begin{equation}
  \label{eq:24}
  \partial\mu:=\sum_{x\in\mu} \mu(\{x\})
  \I\{\sup \mu\neq\sup \mu_{-x}\}
  \delta_{x},
\end{equation}
that is, the generator is the restriction of $\mu$ onto the set of
$x\in\mu$ corresponding to functions contributing to $\sup\mu$.  Note
that $\sup\mu=\sup\partial\mu$. The function 
$\varpi$ never belongs to $\partial\mu$. 

In the following, inequalities between functions are understood
pointwisely for all their arguments. The following
result is proved in the Supplement \cite{las:mol24s}. 

\begin{lemma}\label{lhullfunctions} 
  Let $\mu\in\bN$. The generator given by \eqref{eq:24} satisfies
  (H1)--(H4), is measurable, and the corresponding hull operator is
  given by
  \begin{equation}
    \label{eq:27h}
    [\mu]=\{x\in\BX: x\leq \sup\mu\}\setminus \supp\partial\mu.
  \end{equation}
\end{lemma}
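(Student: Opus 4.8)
The plan is to verify (H1)--(H4), then measurability, then the hull formula \eqref{eq:27h}, working throughout with the key observation that $\sup\mu = \sup\partial\mu$ and that adding a point $x$ changes the generator exactly when $x$ is not dominated by the previous supremum \emph{and} (by (F2)) genuinely contributes. First I would record the basic dichotomy: for $\mu\in\bN$ and $x\in\BX$, either $x\le\sup\mu$ (equivalently $x\le\sup\mu_{-x}$ when $x\in\mu$, using $\sup\mu=\sup\partial\mu$ and that a single function cannot be the sole contributor to its own value at a point where it equals $\sup\mu$ — here (F1) and (F2) are used to see that ``$\sup\mu\ne\sup\mu_{-x}$'' is equivalent to ``there is an open set on which $x$ strictly exceeds all other functions of $\mu$''), or $x\not\le\sup\mu$, in which case $x$ strictly exceeds $\sup\mu$ somewhere and hence contributes. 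With this, $H_x(\mu)=\I\{x\not\le\sup\mu\}$ should drop out almost immediately once \eqref{eq:27h} is in hand, but it is cleanest to prove the axioms first and deduce the hull formula last.

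For (H1), $\partial\mu\le\mu$ is immediate from \eqref{eq:24}. For (H2), if $x\in\partial\mu$ then $x$ contributes to $\sup\mu$, so $\sup(\mu+\delta_x)=\sup\mu$; every $y\in\mu$ with $y\ne x$ has $\sup(\mu+\delta_x)_{-y}=\sup\mu_{-y}$ since removing $y$ still leaves $x$ (with multiplicity $\mu(\{x\})+1\ge 2$, so at least one copy of $x$ remains and, as $x$ already contributed in $\mu$, removing one copy of $x$ from $\mu+\delta_x$ leaves $\sup$ unchanged) — hence the contributing set of $\mu+\delta_x$ is that of $\mu$ together with $x$ itself, giving $\partial(\mu+\delta_x)=\partial\mu+\delta_x$. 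Here one must be careful about multiplicities, but the point is that $\sup$ depends only on the support, so ``contributing'' is a property of $\supp\mu$ that the multiplicity in \eqref{eq:24} merely copies. For (H3), I would use the equivalent form (H3a)+(H3b) from Lemma~\ref{lemma:mid-way}: (H3a) holds because $\sup\partial\mu=\sup\mu$ and the contributing functions of $\partial\mu$ are exactly those of $\mu$; for (H3b), if $\mu'\le\mu$, $\partial\mu'=\partial\mu$ and $\mu''\le\mu-\mu'$, then $\sup\mu'=\sup\mu=\sup(\mu'+\mu'')$ (since $\mu'\le\mu'+\mu''\le\mu$), and a function contributes to $\sup(\mu'+\mu'')$ iff it is one of the contributing functions of $\mu'$, which are those of $\partial\mu$ — so $\partial(\mu'+\mu'')=\partial\mu$. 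For (H4): if $\mu'\le\mu$ and $\partial\mu=\partial\mu'$ then $\sup\mu=\sup\partial\mu=\sup\partial\mu'=\sup\mu'$, and for any $\psi$, $\sup(\mu+\psi)=\sup(\mu'+\psi)$ and the contributing functions of $\mu+\psi$ coincide with those of $\mu'+\psi$ (a function of $\mu\setminus\mu'$ that is not contributing to $\mu$ is dominated by $\sup\mu=\sup\mu'$ and thus cannot contribute once $\psi$ is added either), hence $\partial(\mu+\psi)=\partial(\mu'+\psi)$.

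For measurability I would invoke Lemma~\ref{lemma:graph}, so it suffices to show $(\mu,x)\mapsto\I\{x\in[\mu]\}$ is jointly measurable; since I will have identified $[\mu]=\{x:x\le\sup\mu\}\setminus\supp\partial\mu$, and since $\mu\mapsto\sup\mu$ is measurable (as $\mu\mapsto(\sup\mu)(r)=\sup_{y\in\mu}y(r)$ is measurable for each $r$, using a measurable enumeration of the points of $\mu$ as in \cite[Corollary~6.5]{LastPenrose17}, plus (F1) to reduce the pointwise sup over $\BS$ to a countable dense subset of the relevant open set), joint measurability of $\{x\le\sup\mu\}$ follows from measurability of $(x,g)\mapsto\I\{x\le g\}$ and that of $\supp\partial\mu$ from the preceding axioms together with Lemma~\ref{lemma:graph} used in the reverse direction — or more directly, $\I\{x\in\supp\partial\mu\}=\I\{x\in\mu\}\I\{\sup\mu\ne\sup\mu_{-x}\}$, which is jointly measurable by the same enumeration argument. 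Finally, to prove \eqref{eq:27h}: $x\in[\mu]$ means $\partial(\mu+\delta_x)=\partial\mu$, i.e.\ adding $x$ changes neither the supremum nor the contributing set; $\sup(\mu+\delta_x)=\sup\mu$ is equivalent to $x\le\sup\mu$, and when this holds the contributing set is unchanged unless $x$ already lay in $\supp\mu$ as a non-contributing point in which case it is still unchanged, or $x=\varpi\notin\partial\mu$ which is fine, so the only obstruction is $x\in\supp\partial\mu$ (where $\partial(\mu+\delta_x)=\partial\mu+\delta_x\ne\partial\mu$ by (H2)); conversely if $x\le\sup\mu$ and $x\notin\supp\partial\mu$ then adding $x$ leaves both $\sup$ and the contributing set fixed, so $x\in[\mu]$.

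I expect the main obstacle to be the careful bookkeeping of multiplicities and of the function $\varpi$ in the ``contributing set'' description — in particular checking that ``$x$ contributes to $\sup\mu$'' is unambiguous (does not depend on whether we test via $\mu_{-x}$ or via removing a single copy of $x$ when $x$ has multiplicity $>1$), for which the clean statement is that contribution is equivalent to the existence of a nonempty open $U$ with $x(r)>\sup_{y\in\supp\mu,\,y\ne x}y(r)$ for $r\in U$; establishing this equivalence rigorously is where (F1) and (F2) do the real work, and once it is in place all four axioms and the hull formula follow by the routine manipulations sketched above. I would therefore state and prove this ``contribution lemma'' first and then treat (H1)--(H4), measurability, and \eqref{eq:27h} as corollaries.
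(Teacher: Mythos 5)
Your overall plan — isolate a ``contribution'' characterisation, then deduce (H1)--(H4), measurability, and the hull formula — is in the right spirit, and you correctly flag that (F1), (F2) must do the real work. But the crux is not the contribution lemma you state; it is the following converse step, which your sketch merely asserts: if $x\le\sup\mu$ and $x\notin\supp\partial\mu$, then adding $\delta_x$ cannot \emph{dislodge} an existing contributor $x'\in\partial\mu$. Your contribution lemma (``$x'$ contributes iff it strictly exceeds all other functions on some open set'') does not give this for free: on the open set $U$ where $x'$ is the unique maximum, one could a priori have $x=x'$ on $U$, in which case $x'$ ceases to contribute to $\mu+\delta_x$. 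Ruling this out is exactly where the paper uses (F3) (to obtain such a $U$ in the first place, via local finiteness of near-maxima) and then (F2) (to conclude $x=x'$ globally, a contradiction). This is the heart of the paper's proof of \eqref{eq:27h}, and your plan leaves it as an unproved claim under ``the contributing set is unchanged.'' Relatedly, your (H4) sketch covers only one direction: you show a non-contributing $v\in\mu\setminus\mu'$ cannot contribute to $\mu+\psi$, but not that a $z\in\supp\mu'\cup\supp\psi$ has the same contribution status in $\mu+\psi$ as in $\mu'+\psi$. Since $(\mu+\psi)_{-z}\ge(\mu'+\psi)_{-z}$, only one inclusion of contributing sets is automatic, and the other needs an argument of the same (F2)/(F3) flavour. (The paper dismisses (H1)--(H4) as ``easy to check,'' so this is less consequential, but the gap is real in your sketch.)

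Two smaller slips: the parenthetical ``$x\le\sup\mu$ (equivalently $x\le\sup\mu_{-x}$ when $x\in\mu$)'' is not an equivalence — the first always holds for $x\in\mu$, the second holds iff $x$ is non-contributing; and the claim $H_x(\mu)=\I\{x\not\le\sup\mu\}$ is false when $x\in\supp\partial\mu$ (then $x\le\sup\mu$ yet $H_x(\mu)=1$); the correct consequence of \eqref{eq:27h} is $H_x(\mu)=\I\{x\not\le\sup\mu\}+\I\{x\in\supp\partial\mu\}$. Your measurability route (via Lemma~\ref{lemma:graph} and a measurable enumeration plus a countable dense set in $\BS$) is essentially the paper's and is fine.
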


Let $\lambda$ be a measure on $(\BX,\cX)$, which is finite on $\cX_0$.
Let $\eta$ be a Poisson process on $\BX$ with intensity measure
$\lambda$.  Below we shall prove the strong Markov property.  If
$\BP(\partial\eta(\BX)=\infty)>0$, that is, the generator is infinite
with a positive probability, we need the following additional
assumption.
\begin{enumerate}
\item[(F4)] For $\Pi_\lambda$-a.e.\ $\psi\in\bN$ and $\lambda$-a.e.\
  $x\in\BX$ the following is true. If $x\le \sup\psi$ then there exists
  a finite $\psi'\in\bN$ such that $\psi'\le \psi$ and $x\le \sup\psi'$.
\end{enumerate}
If $\lambda(\BX)<\infty$, then (F4) holds. It is easy to see that 
(F4) holds if the generator has the prime property. The following
result is proved in the Supplement \cite{las:mol24s}. 

\begin{lemma}\label{lMarkovfunctions}
  If $\lambda(\BX)<\infty$, then the strong Markov property
  \eqref{stopping3} holds. In the general case this remains true if
  condition (F4) holds.
\end{lemma}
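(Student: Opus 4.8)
The plan is to reduce Lemma~\ref{lMarkovfunctions} to the abstract criterion of Proposition~\ref{prop:infinite}, so the real work is verifying the two limit conditions \eqref{eH6} and \eqref{eH61} for the functional generator \eqref{eq:24}. First I would dispose of the finite case: if $\lambda(\BX)<\infty$, then $\eta$ is a.s.\ a finite counting measure, $\partial\eta$ is a.s.\ finite, and Theorem~\ref{tstopping1} applies directly to give \eqref{stopping3}. So the substance is the general case under (F4).

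The core idea is a stabilisation/exhaustion argument. Recall $B_n\uparrow\BX$ with $B_n\in\cX_0$, so $\eta_{B_n}\uparrow\eta$ and each $\eta_{B_n}$ is a.s.\ finite. The key geometric fact I would extract from \eqref{eq:27h} and \eqref{eq:24} is that both $\partial$ and $[\cdot]$, localised to a fixed $B\in\{B_m\}$, depend only on the function $\sup\mu$ on a \emph{compact} region together with finitely many ``exposed'' points: by (F1) and (F3), membership of a function $x\in B$ in $[\mu]$ is governed by whether $x\le\sup\mu$, and by (F2) and (F3) deciding $x\le\sup\mu$ on the relevant compact set $K$ where $x$ exceeds $\varpi_{c,\eps}$ only requires knowing $\sup\mu$ on $K$. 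Using (F4), for $\Pi_\lambda$-a.e.\ $\psi$ and $\lambda$-a.e.\ $x$ with $x\le\sup\psi$, there is a \emph{finite} $\psi'\le\psi$ already witnessing $x\le\sup\psi'$; hence for $n$ large enough (so that $\psi'\le\psi_{B_n}$) we get $x\le\sup\psi_{B_n}$ as well. This gives convergence $[\psi_{B_n}]\cap B\uparrow[\psi]\cap B$ in the appropriate pointwise-a.e.\ sense, and correspondingly $\sup\psi_{B_n}\uparrow\sup\psi$ pointwise. I would then argue that $\partial\psi_{B_n}$ restricted to $B$ stabilises: a point $x\in\psi$ lies in $\supp\partial\psi$ iff removing it strictly decreases $\sup\psi$ at some $s$, a condition that is already visible from $\psi'+\delta_x$ for a suitable finite $\psi'$, hence from $\psi_{B_n}$ for large $n$; conversely if $x\in\supp\partial\psi_{B_n}$ for some $n$ then $x\in\supp\partial\psi$ by monotonicity of $\sup$ and the additivity property (H2). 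Combining, $(\partial\psi_{B_n})_B=(\partial\psi)_B$ for all large $n$, and $\psi_{[\psi_{B_n}]\cap B}=\psi_{[\psi]\cap B}$ for all large $n$.

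With this pathwise stabilisation in hand, \eqref{eH6} follows immediately by dominated convergence (the events increase to a full-probability event), and \eqref{eH61} likewise, since for $\Pi_\lambda\otimes\Pi_\lambda$-a.e.\ pair the inner indicator is eventually $1$; here one uses that the relevant ``bad'' set of $(\mu,x)$ where the stabilisation fails has $\lambda$-measure zero for $\Pi_\lambda$-a.e.\ $\psi$, by (F4) together with the diffuseness not being needed—only $\sigma$-finiteness—and Fubini. Then Proposition~\ref{prop:infinite} yields \eqref{stopping3}.

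The main obstacle I expect is the measurability and exhaustion bookkeeping: making precise that ``$[\mu]$ localised to $B$ depends only on $\sup\mu$ on a compact set'' requires careful use of (F1)–(F3), in particular handling the two cases $\varpi(r)=-\infty$ versus $\varpi(r)>-\infty$ in the definition of $\varpi_{c,\eps}$, and ensuring the compact set $K$ can be chosen uniformly enough that $\eta_{B_n}$ eventually determines $\sup\eta$ on $K$ (this is where (F3)'s converse clause — every $A\in\cX_0$ with $\varpi\notin A$ sits inside one of the standard sets — is needed). A secondary subtlety is the borderline case $x\le\sup\mu$ with equality attained only ``in the limit'' or not attained at all; (F4) is precisely the hypothesis inserted to rule out pathologies where $x\le\sup\psi$ holds but no finite subconfiguration witnesses it, and I would lean on it exactly at that point. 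Everything else — (H1)–(H4) and measurability of the generator, and the formula \eqref{eq:27h} — is already supplied by Lemma~\ref{lhullfunctions}, so it can be cited rather than reproved.
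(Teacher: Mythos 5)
Your skeleton is the same as the paper's: handle the finite case via Theorem~\ref{tstopping1}, then reduce the general case to Proposition~\ref{prop:infinite} and verify \eqref{eH6}, \eqref{eH61} by pathwise stabilisation of $\partial\psi_{B_n}$ and $[\psi_{B_n}]$ along $B_n\uparrow\BX$, using (F4) to produce a finite witness for $x\le\sup\psi$. That part is fine, and so is the (F4) step giving $\I\{x\in[\psi_{B_n}]\}\to\I\{x\in[\psi]\}$ a.e.

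There is, however, a genuine gap in the stabilisation argument for the generator. You have the two directions backwards. The implication ``$x\in\supp\partial\psi$ (with $x\in B_n$) $\Rightarrow x\in\supp\partial\psi_{B_n}$'' is the \emph{trivial} direction: since $\sup(\psi_{B_n})_{-x}\le\sup\psi_{-x}$, any $s$ exposing $x$ in $\psi$ also exposes it in $\psi_{B_n}$; no finite witness $\psi'$ is needed. What actually requires work is the \emph{converse}: if $x\in\supp\partial\psi_{B_n}$, why is $x\in\supp\partial\psi$? Your appeal to ``monotonicity of $\sup$'' points the wrong way -- $\sup\psi_{-x}$ can only be \emph{larger} than $\sup(\psi_{B_n})_{-x}$, so the inequality $x(s)>\sup(\psi_{B_n})_{-x}(s)$ does not by itself survive passing to $\psi$. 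A priori a point exposed in a truncated configuration may be swallowed once the rest of $\psi$ is added. This is precisely why the paper does not take $B_n$ generic but builds it explicitly as
\[
  B_n=\big\{x\in\BX:\text{$x(r)\ge\varpi_{-n,a_n}(r)$ for some $r\in K_n$}\big\}\cup\{\varpi\},
\]
with $K_n\uparrow\BS$ compact and $a_n\downarrow 0$. With this choice, every function \emph{outside} $B_n$ lies below $\varpi_{-n,a_n}$ on $K_n$, so $(\sup\psi_{\BX\setminus B_n})(s)\le a_n+\varpi(s)$ for $s\in K_n$, and an exposing point $s\in K_n$ for $x$ with $x(s)>a_n+\varpi(s)$ remains exposing after adding $\psi_{\BX\setminus B_n}$. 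That uniform tail bound from the explicit form of $B_n$ is the missing ingredient; without it, the converse (and hence $(\partial\psi_{B_n})_B\to(\partial\psi)_B$ and ultimately \eqref{eH6}) does not follow. You flag the need for ``careful exhaustion bookkeeping'' involving (F3)'s converse clause, but the proposal as written does not actually use that structure -- it asserts the conclusion where the paper has to prove it.
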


\subsection{Parametric families of functions}
\label{sec:param-famil-funct}

In this subsection we assume that $\BX$ is a family of measurable
functions $g_{s,u}\colon\BS\to[-\infty,\infty)$ parametrised by
$(s,u)$ from the product space $\BS\times\R$. We let
$\varpi\colon\BS\to[-\infty,\infty)$ be as in Subsection
\ref{sec:general-setting}, that is, we assume that $g_{s,u}\ge \varpi$
for all $(s,u)\in\BS\times\R$.  To ensure condition (F0), we assume
that $g_{s_n,u_n}(r)\to g_{s,u}(r)$ for all $r\in\BS$ if and only if
$s_n\to s$ and $u_n\to u$ as $n\to\infty$. This is the case if the
parametrisation is continuous with respect to the pointwise
convergence of functions. Then the map $(s,u,r)\mapsto g_{s,u}(r)$ is
measurable. Condition (F1) is ensured by assuming that all functions
$g_{s,u}$ are continuous on the set $\{r\in\BS:g_{s,u}(r)>\varpi(r)\}$
and the latter set is open.  We assume that if
$g_{s,u}(r)=g_{s',u'}(r)$ for $r$ from a nonempty open set in $\BS$,
where the functions are different from $\varpi(r)$, then
$(s,u)=(s',u')$.  This implies (F2).  In the following we
shall tacitly identify $\BX$ with $\BS\times\R$.  In our current more
specific setting we reformulate assumption (F3) on $\cX_0$ as follows.
\begin{itemize}
\item [(F3$^\prime$)] If $\varpi\in\BX$, then
  $\{\varpi\}\in\cX_0$. Moreover, a set $A\in\cX$ with
  $\varpi\notin A$ is in $\cX_0$ if and only if there exist a compact
  set $K\subset\BS$, an $\eps>0$ and a $c\in\R$ such that
  \begin{align}\label{eint7}
    A\subset    \big\{(s,u)\in\BS\times\R:g_{s,u}(r)\geq \varpi_{c,\eps}(r)
    \;\text{for some}\; r\in K\big\}.
  \end{align}
\end{itemize}
Note that by our continuity assumptions on the functions $g_{s,u}$ the
set on the right-hand side of \eqref{eint7} is in $\cX$. By
(F3$^\prime$) this set belongs to $\cX_0$.  Finally we assume that
$g_{s,u}(r)$ is increasing in $u$ for each $s,r\in\BS$. This holds
in all our examples, where often $g_{s,u}=g_{s,0}+u$ for all
$s\in\BS$ and $u\in\R$.

Let $\phi\colon\BS\to[-\infty,\infty)$ be a measurable function such
that $\phi\geq\varpi$, the set $\{r\in\BS:\varphi(r)>\varpi(r)\}$ is
open and $\varphi$ is continuous on it.  Let $\bar\lambda$ be the
product of a locally finite measure $\nu$ on $\BS$ and a locally
finite diffuse measure $\theta$ on $\R$.  As in
Remark~\ref{rem:T-sufficient}, we work with the measure $\lambda$
obtained as the restriction of $\bar\lambda$ onto a Borel set in
$\BH\subset\BS\times\R$ defined as
\begin{displaymath}
  \BH:=\big\{(s,u)\in \BS\times\R: \varpi\ne g_{s,u}\leq\varphi\big\}.
\end{displaymath}
We refer to $\phi$ as the \emph{boundary function}.  In specific
examples we need to check that $\lambda$ is indeed locally finite
(that is, finite on $\cX_0$), see Remark~\ref{r:locfinite} and
Example~\ref{ex:Legendre}. If necessary, we impose (F4).  In a
statistical context the set $\BH$, equivalently, $\phi$, is regarded
as unknown. The goal is to recover information on $\phi$ by observing a
Poisson process $\eta$ on $\BS\times\R$ with intensity measure
$\lambda$, equivalently, a Poisson process on $\BX$ with intensity
measure being the pushforward of $\lambda$ under the map
$(s,u)\mapsto g_{s,u}$.

By the monotonicity of $g_{s,u}$ in $u$, the set
$\{u:\varpi\ne g_{s,u}\le \varphi\}$ is an interval for each
$s\in\BS$, and so
\begin{displaymath}
  \BH=\big\{(s,u)\in \BS\times\R: \varpi_*(s)\leq u\leq \phi^-(s)\big\},
\end{displaymath}
where 
\begin{align}
  \varpi_*(s)&:=\inf\big\{u: \varpi\ne g_{s,u}\big\},\quad s\in\BS, \notag\\
  \label{eq:15}
  \phi^-(s)&:=\sup\big\{u\in\R: g_{s,u}\leq \phi\big\},\quad s\in\BS.
\end{align}
If the set under the infimum (respectively, supremum) is empty, we let
$\varpi_*(s)=\infty$ (respectively, $\phi^-(s):=\varpi_*(s)$). Then
\begin{equation}
  \label{elambda1}
  \lambda(d(s,u))
  =\I\big\{\varpi_*(s)\leq u\leq \phi^-(s)\big\}\,\nu(ds)\,\theta(du). 
\end{equation}

Fix a function $f\in L^1(\lambda)$, and let
$F:=\int f\,d\lambda$ as before.  Then
\begin{align}
  \label{eq:20}
  F=\int_{\BS} \tilde{f}\big(s,\phi^-(s)\big)\,\nu(ds),
\end{align}
where
\begin{equation}
  \label{eq:tilde-f}
  \tilde{f}(s,u):=\int_{\varpi_*(s)}^u f(s,t)\, \theta(dt).
\end{equation}

We write $\sup\eta$ for the function obtained as the supremum of
$g_{s,u}$ for $(s,u)\in\eta$, and note that $(\sup\eta)^-$ (defined as
at \eqref{eq:15} with $\phi$ replaced by $\sup\eta$) is the supremum
of all functions $g_{s,u}\in[\eta]$, where the generator and the hull
operator are defined as in the previous subsection. By
Lemma~\ref{lhullfunctions}, the Poisson hull estimator \eqref{e6.23}
of $F$ becomes
\begin{align}
  \label{eq:26}
  \hat{F}=\int \tilde{f}\big(s,(\sup\eta)^-(s) \big)\,
  \nu(ds)+\int f(s,u)\, \partial\eta(d(s,u)).
\end{align}
The first term on the right-hand side is an empirical (plugin) version
of \eqref{eq:20}. The second summand in \eqref{eq:26} can be
interpreted as an additive correction term. Recall that $\partial\eta$
is a sufficient statistic for $\BH$ and so for $F$, see
Remark~\ref{rem:T-sufficient}.

\begin{remark}
  \label{rem:minimum}
  The definition of the generator \eqref{eq:24} can be amended by
  replacing the supremum with the infimum. In this case all above
  constructions apply with $\phi^-$ replaced by
  $\phi^+(s):=\inf\{u\in\R: g_{s,u}\geq \phi\}$ and reversing the
  inequalities in 
  \eqref{elambda1}. The first term
  in $\hat{F}$ from \eqref{eq:26} overestimates $F$
  from~\eqref{eq:20}, and the hull estimator is obtained by changing
  the sign in front of the second term in \eqref{eq:26}.
\end{remark}

It is easy to see that $(\sup\eta)^-=\sup\eta$ if the condition of the
following lemma holds and $g_{s,u}(s)=u$ for all $s$ and $u$. 

\begin{lemma}
  \label{lemma:domination}
  Assume that $\BX$ consists of functions $g_{s,u}$ such that
  $g_{s,u}(r)\leq g_{s,u}(s)$ for all $s,r\in\BS$, $u\in\R$, and
  $g_{s,u}\leq g_{s',u'}$ for $s,s'\in\BS$ and $u,u'\in\R$ if and only if
  $g_{s,u}(s)\leq g_{s',u'}(s)$. Then the generator has the prime
  property.
\end{lemma}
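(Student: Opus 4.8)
The plan is to verify the characterisation of primeness from Lemma~\ref{lemma:prime-c}: a generator is prime if and only if $[\mu]=\bigcup_{x\in\mu}[\delta_x]$ for every $\mu\in\bN$. The inclusion $\supseteq$ is automatic, since $\delta_x\le\mu$ for each $x\in\mu$ and the hull operator is monotone (the observation after \eqref{eq:nu}). For the reverse inclusion I would use the explicit description \eqref{eq:27h}, $[\mu]=\{y\in\BX:y\le\sup\mu\}\setminus\supp\partial\mu$, together with $\sup\delta_x=x$ and $\partial\delta_x=\delta_x$ for $x\ne\varpi$, which give $[\delta_x]=\{y\in\BX:y\le x\}\setminus\{x\}$. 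So the task reduces to: whenever $z\in[\mu]$, there is $x\in\mu$ with $z\le x$ and $z\ne x$. The degenerate cases $\mu=0$ and $z=\varpi$ are trivial and, as is implicit throughout this subsection, one assumes $\varpi\notin\BX$; I would clear these away first and then take $\mu\ne0$ and $z\ne\varpi$.

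Fix such a $z$ and write $z=g_{t,v}$. The first hypothesis of the lemma, $g_{s,u}(r)\le g_{s,u}(s)$, says that $t$ maximises $z$, so $z(t)=\max_r z(r)$. Since $z\in[\mu]$ we have $z\le\sup\mu$, and evaluating at $r=t$ gives $z(t)\le(\sup\mu)(t)=\sup_{x\in\mu}x(t)=:L$. The crucial point is that this supremum is attained. Indeed, if $L>\varpi(t)$, then for $c$ with $\varpi(t)<c<L$ the set $\{x\in\BX:x(t)\ge c\}$ lies in $\cX_0$ by (F3) applied with the compact set $K=\{t\}$; since $\mu$ is locally finite it charges this set with only finitely many points, so $L=\max\{x(t):x\in\mu,\ x(t)\ge c\}$ is a maximum over a finite set, attained at some $x^*\in\mu$ (if $L=\varpi(t)$ every point of $\mu$ agrees with $\varpi$ at $t$ and attainment is trivial). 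Now $x^*(t)=L\ge z(t)$, and the second hypothesis --- the equivalence $g_{s,u}\le g_{s',u'}\iff g_{s,u}(s)\le g_{s',u'}(s)$, applied with $g_{s,u}=z=g_{t,v}$ and $g_{s',u'}=x^*$ --- upgrades this inequality at the single point $t$ to the pointwise inequality $z\le x^*$.

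It remains to rule out $x^*=z$. If $z\ne x^*$ we are done, as $z\in[\delta_{x^*}]$. If $z=x^*$, then $z\in\mu$, and $z\in[\mu]$ forces $z\notin\supp\partial\mu$, i.e.\ $\sup\mu=\sup\mu_{-z}$ by \eqref{eq:24}; moreover $\mu\ne\mu(\{z\})\delta_z$ (otherwise $z$ would lie in $\supp\partial\mu$), so $\mu_{-z}\ne0$. Repeating the argument of the previous paragraph with $\mu_{-z}$ in place of $\mu$ --- now $\sup_{x\in\mu_{-z}}x(t)=(\sup\mu)(t)=z(t)$, still attained --- gives $x^{**}\in\supp\mu_{-z}$ with $z\le x^{**}$, and $x^{**}\ne z$ because $z\notin\supp\mu_{-z}$; hence $z\in[\delta_{x^{**}}]$, and no further iteration is needed. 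I expect the attainment of the supremum to be the only genuine obstacle: the domination hypothesis makes a dominating point of $\mu$ appear as soon as one reaches the value $L$ at $t$, but without local finiteness one could only extract a sequence $x_n\in\mu$ with $x_n(t)\uparrow L$ and $x_n(t)<L$, none of which dominates $z$ --- and it is exactly the membership $\{x\in\BX:x(t)\ge c\}\in\cX_0$ furnished by (F3) that prevents this.
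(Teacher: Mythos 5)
Your proof is correct and follows essentially the same route as the paper's: reduce via Lemma~\ref{lemma:prime-c} to producing, for each $z\in[\mu]$, some $x\in\mu$ with $z\le x$ and $z\ne x$; apply (F3) with the singleton compact set $\{t\}$ to guarantee that $\sup_{x\in\mu}x(t)$ is actually attained by some $x^*\in\mu$; and upgrade the one-point inequality $z(t)\le x^*(t)$ to the pointwise inequality $z\le x^*$ via the second hypothesis. Where you are slightly more careful than the published proof is in securing $z\ne x^*$: the paper produces $y\in\mu$ with $x\le y$ and then asserts $x\in[\delta_y]$ without explicitly ruling out $y=x$, which is a genuine possibility when $x$ itself lies in $\mu$ and would make the assertion fail (since $x\notin[\delta_x]$); your pass to $\mu_{-z}$ --- licensed because $z\in[\mu]$ gives $z\notin\supp\partial\mu$, hence $\sup\mu=\sup\mu_{-z}$ --- closes that small gap explicitly. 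One minor inaccuracy in the exposition: it is not true that $\varpi\notin\BX$ is ``implicit throughout this subsection'' (the paper's own proof treats $x=\varpi$ as a separate case), but since you dispose of $z=\varpi$ directly anyway, nothing is lost.
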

\begin{proof}
  Let $\mu\in\bN$ and $x=g_{s,u}\in[\mu]$, that is
  $\partial(\mu+\delta_{x})=\partial\mu$.  By
  Lemma~\ref{lemma:prime-c} we need to show that $x\in[\delta_{y}]$
  for some $y\in\mu$.  If $\mu=0$, then $[\mu]=\emptyset$, and there
  is nothing to prove.  Assume that $\mu\ne 0$.  We have
  $x\notin\partial\mu$ and $x\leq \sup \mu$.  If $x=\varpi$, then
  $x\leq y$ for each $y\in\mu$, so that
  $\partial(\delta_y+\delta_x)=\partial(\delta_y)$.  Otherwise,
  $g_{s,u}(s)>\varpi(s)$. By (F3$^\prime$) with $K=\{s\}$, there is at most a
  finite set of functions $y\in\mu$ such that $y(s)\geq
  g_{s,u}(s)$. Thus,
  \begin{math}
    g_{s,u}(s)\leq (\sup\mu)(s)=y(s)
  \end{math}
  for some $y\in\mu$. By assumption we have $x\le y$,
  hence, $x\in[\delta_y]$.
\end{proof}

\begin{example}
  Assume that $\varpi=\varpi_*\equiv 0$, $\theta$ is the Lebesgue
  measure, and let $f(s,u):=\Phi'(u)$, where
  $\Phi'\colon [0,\infty) \to\R $ is locally integrable.  Then
  $\tilde{f}(s,u)=\Phi(u)$, where $\Phi(u):=\int_0^u\Phi'(t)\,dt$,
  $u\in[0,\infty)$.  Fix a boundary function $\phi$ and consider
  $\lambda$ from \eqref{elambda1}. Then
  \begin{align*}
    F=\iint f(s,u)\,du\,\nu(ds)=\int \Phi \big(\phi^-(s) \big)\,\nu(ds),
  \end{align*}
  provided that $f\in L^1(\lambda)$. (The latter holds, for instance,
  if $\Phi'\ge 0$ and $\int \Phi(\varphi^-(s))\,\nu(ds)<\infty$.)  The
  Poisson hull estimator \eqref{eq:26} is given by
  \begin{align*}
    \hat{F}=\int \Phi \big ((\sup \eta)^-(s) \big)\,\nu(ds)
    +\int f(s,u)\, \partial\eta(d(s,u)).
  \end{align*}
  If, for example, $\Phi'(u):=pu^{p-1}$ with $p>0$, we estimate
  $F=\int (\phi^-(s))^p\,\nu(ds)$. If $\BS=[0,1]$ and $\nu$ the
  Lebesgue measure, this estimator was studied in \cite{ReissSelk17}
  (with $\Phi$ chosen to be the identity) and \cite{ReissWahl19}.  In
  fact, the cited papers dealt with an infimum instead of a supremum,
  which amounts to changing the sign of the compensating term, see
  Remark~\ref{rem:minimum}.
\end{example}

\begin{remark}
  If $\BX$ is the space of constant functions on $\BS$, then $\BX$ can
  be identified with $\R$ and we can take $\varpi\equiv -\infty$. A
  boundary function is then just a number $a\in\R$. Up to a sign this
  is Example~\ref{ex:minimum}.
\end{remark}

\begin{remark}\label{r:locfinite}
  We have assumed that $\lambda$ is locally finite, i.e., that
  $\lambda$ is finite on sets of the form \eqref{eint7}.
  In some cases this assumption can be verified as follows.
  Assume that $g_{s,u}(r)\leq u$ for all $r\in\BS$ and all $s,u$.  Set
  $a:=\inf\{\varpi_{c,\eps}(r):r\in K\}>\inf\varpi$. If $\theta$ is
  absolutely continuous with density bounded by $c$, then for the set
  $A$ from the left-hand side of \eqref{eint7} we have
  \begin{align*}
    \lambda(A)\leq \int\I \big\{a\leq u\leq \phi^-(s) \big\}\,\lambda(d(s,u))
    \leq c\int \big (\phi^-(s)-a \big)_+\,\nu(ds),
  \end{align*}
  where $u_+$ denotes the positive part of $u\in\R$.
\end{remark}

The following provides an example of the family $\{g_{s,u}\}$. The
next subsections will be devoted to an extensive discussion of other
examples.

\begin{example}\label{ex:Legendre}
  Let $\BS=\R$, $\varpi\equiv 0$, and let $g_{s,u}(r)=(sr+u)_+$ for
  $s,u\in\R$. If $\phi\colon\R\to\R_+$ is a convex function, then
  \begin{align*}
    \phi^-(s)=\sup\big\{u\in\R: sr+u\leq \phi(r)\;\text{for all}\; r\in\R\big\}
    =-\sup_{r\in\R} \big (sr-\phi(r) \big)=-\phi^o(s),
  \end{align*}
  where $\phi^o$ is the Legendre transform of $\phi$. Furthermore,
  $\varpi_*(0)=0$ and $\varpi_*(s)=-\infty$ for $s\neq0$. Let $\nu$ be
  a locally finite measure on $\R$, and let $\theta$ be Lebesgue
  measure on $\R$. Assuming $\nu(\{0\})=0$, we have
  \begin{align*}
    \lambda(d(s,u))=\I\big\{u\le -\phi^o(s)\big\}\,\nu(ds)\,du.
  \end{align*}
  Conditions (F1) and (F2) evidently hold, (F0) holds due to
  continuous parametrisation of $g_{s,u}$.  
  We now check that $\lambda$ is locally finite.
  Let $K=[-b,c]$ with $b,c>0$. For $a\ge 0$,
  \begin{align*}
    \lambda\big(\big\{(s,u):
    &(sr+u)_+> a \; \text{for some}\; r\in [-b,c]\big\}\big)\\
    &=\lambda \big (\big\{(s,u):sc+u> a, s\geq 0 \big\}\big)
    +\lambda \big (\big\{(s,u):-sb+u> a, s< 0 \big\}\big)\\
    &=\int_0^\infty \big(\phi^-(s)-a+sc\big)_+\,\nu(ds)
    +\int_{-\infty}^0 \big(\phi^-(s)-a-sb\big)_+\,\nu(ds).
  \end{align*}
  Since $\phi^-(s)-a+sc\geq 0$ if and only if $\phi(r)\ge s(r-c)+a$
  for all $r\in\R$, the first integrand does not vanish at most for
  $s$ from a compact set. With a similar argument applied to the
  second integrand, we see that the local finiteness of $\lambda$
  follows from the corresponding property of Lebesgue measure.
  
  We now confirm (F4). 
  Take a function $g_{s_0,u_0}$ which is dominated by $\sup\eta$.
  Assume that $s_0>0$. Since $\lambda$ is diffuse, we can assume
  $g_{s_0,u_0}\notin\eta$. There must be a $g_{s,u}\in \eta$ with
  $s>s_0$.  (Otherwise we cannot have $g_{s_0,u_0}\le \sup\eta$.)  The
  graphs of $g_{s_0,u_0}$ and $g_{s,u}$ intersect at some point with
  first coordinate $v_1$, say.  On $[v_1,\infty)$ the function
  $g_{s_0,u_0}$ is dominated by $g_{s,u}$.  Let $v_0:=-u_0/s_0$ be the
  largest zero of $g_{s_0,u_0}$.  On the interval $[v_0,v_1]$ the
  function $g_{s_0,u_0}$ is dominated by the supremum of functions
  $r\mapsto sr+u$ which exceed the value $0$ somewhere on $[v_0,v_1]$
  and for which $(s,u)\in\eta$.  Since we have already shown that the
  set of $(s,u)$ such that $sr+u> 0$ for some $r\in [v_0,v_1]$ has
  finite $\lambda$-measure, there is only a finite number of such
  functions.

  If $\int (\phi^-(s))_+\,\nu(ds)<\infty$,
  that is, if the negative part of $\phi^o$ is $\nu$-integrable, then
  we can take $f(s,u)=\I\{u\geq 0\}$, so that $F$ is the integral of
  the negative part of $\phi^o(s)$ and obtain its estimator from
  \eqref{eq:26}.
\end{example}

\subsection{Approximation of H\"older functions}
\label{sec:prime-sett-funct}

We now specify the setting of Subsection~\ref{sec:param-famil-funct}.
Assume that $\BS=\R^d$ with the Euclidean norm $\|\cdot\|$ and let
$\varpi$ be identically equal to $-\infty$.  Fix some $R\ge
0$. For $(s,u)\in\BS\times\R$, define the function
$g_{s,u}\colon\BS\to\R$ by
\begin{equation}
  \label{eq:18}
  g_{s,u}(r):=-R\|s-r\|^\beta+u, \quad r\in \R^d.
\end{equation}
Then $g_{s,u}=g_{s,0}+u$ for all $s$ and $u$.
Conditions (F0)-(F2) hold and (F3$^\prime$) is assumed to be in force.
It means that $\cX_0$ is the smallest ring containing
the sets $K\times[c,\infty)$ for a compact set $K\subset\BS$ and $c\in\R$. 
The following result implies also the validity of (F4).

\begin{lemma}
  \label{lemma:f-prime}
  For the functions given by \eqref{eq:18}, the generator 
  given by \eqref{eq:24} has the prime property.
\end{lemma}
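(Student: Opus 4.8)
The plan is to deduce this from Lemma~\ref{lemma:domination}: since the generator \eqref{eq:24} is already known (Lemma~\ref{lhullfunctions}) to satisfy (H1)--(H4) and to be measurable, it suffices to verify the two structural hypotheses on the family $\{g_{s,u}\}$ imposed in Lemma~\ref{lemma:domination}. Namely, I must check (a) $g_{s,u}(r)\le g_{s,u}(s)$ for all $r,s\in\BS=\R^d$ and $u\in\R$, and (b) for all $s,s'\in\BS$ and $u,u'\in\R$, the pointwise inequality $g_{s,u}\le g_{s',u'}$ holds if and only if $g_{s,u}(s)\le g_{s',u'}(s)$.

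Property (a) is immediate from $R\ge0$, since $g_{s,u}(r)=-R\|s-r\|^\beta+u\le u=g_{s,u}(s)$. In (b), the ``only if'' part follows by evaluating the pointwise inequality at the point $r=s$. For the ``if'' part, suppose $g_{s,u}(s)\le g_{s',u'}(s)$, i.e.\ $u\le u'-R\|s-s'\|^\beta$. Fixing an arbitrary $r\in\BS$ and inserting this bound, the inequality $g_{s,u}(r)\le g_{s',u'}(r)$ reduces to
\[
  \|s'-r\|^\beta-\|s-r\|^\beta\le \|s-s'\|^\beta .
\]
This holds because $t\mapsto t^\beta$ is nondecreasing and subadditive on $[0,\infty)$ for the H\"older exponents $\beta\in(0,1]$ under consideration: by the triangle inequality $\|s'-r\|\le\|s'-s\|+\|s-r\|$, so that $\|s'-r\|^\beta\le(\|s'-s\|+\|s-r\|)^\beta\le\|s'-s\|^\beta+\|s-r\|^\beta$.

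With (a) and (b) established, Lemma~\ref{lemma:domination} yields the prime property, and the validity of (F4) then follows since, as already observed, the prime property implies (F4). The only place a genuine inequality enters is the subadditivity of the power function, which is where the restriction $\beta\le1$ is essential; everything else is bookkeeping, so I do not expect any real obstacle here.
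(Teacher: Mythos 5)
Your proposal is correct and follows essentially the same route as the paper: both reduce to Lemma~\ref{lemma:domination} and reduce the nontrivial ``if'' direction to the inequality $\|s'-r\|^\beta-\|s-r\|^\beta\le\|s'-s\|^\beta$, established via the triangle inequality and subadditivity of $t\mapsto t^\beta$ for $\beta\in(0,1]$. The only cosmetic difference is that the paper argues by contradiction while you give a direct proof, and you explicitly verify the first hypothesis $g_{s,u}(r)\le g_{s,u}(s)$ of Lemma~\ref{lemma:domination} which the paper leaves implicit.
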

\begin{proof}
  By Lemma~\ref{lemma:domination}, it suffices to show that
  $g_{s,u}\leq g_{s',u'}$ if and only if $g_{s,u}(s)=u\leq g_{s',u'}(s)$. The
  only if part is obvious.  Let $s,s'\in\BS$ and $u,u'\in\R$ and
  assume that $g_{s,u}(r)> g_{s',u'}(r)$ for some $r\in\BS$, that is,
  \begin{displaymath}
    u-R\|s-r\|^\beta> u'-R\|s'-r\|^\beta.
  \end{displaymath}
  By the triangle inequality and subadditivity of the
  function $t\mapsto t^\beta$,
  \begin{displaymath}
    \|s'-r\|^\beta-\|s-r\|^\beta\leq \|s'-s\|^\beta.
  \end{displaymath}
  Hence,
  \begin{displaymath}
    u>u'-R\|s'-r\|^\beta+ R\|s-r\|^\beta
    \geq u'-R \|s'-s\|^\beta=g_{s',u'}(s),
  \end{displaymath}
  which is a contradiction. 
\end{proof}

Consider a function $\phi\colon\BS\to[0,\infty)$,
satisfying the H\"older condition
\begin{align}
  \label{eLipschitz}
  \big|\phi(s)-\phi(s') \big|\le R' \|s-s'\|^\beta,\quad s,s'\in\BS,
\end{align}
for some $R'\in[0,R]$ and $\beta\in(0,1]$.
Assumption~\eqref{eLipschitz} yields that
$g_{s,\phi(s)}=g_{s,0}+\phi(s)\le \phi$, so that $\phi^-=\phi$. Indeed, if
$s,r\in\BS$ and $u\in\R$ satisfy $u-R\|s-r\|^\beta>\phi(r)$ and
$u\le\phi(s)$ then $\phi(s)-R\|s-r\|^\beta>\phi(r)$, contradicting
\eqref{eLipschitz} (and $R'\le R$).
Since $\phi^-=\phi$,
\begin{align*}
  \BH:=\big\{(s,u)\in\R^d\times\R: g_{s,u}\le \varphi \big\}
  =\big\{(s,u)\in\R^d\times\R: u\le \varphi(s) \big\}.
\end{align*}
If $B\subset \R^d\times\R$ is a Borel set such that
$B\cap\BH$ is bounded in $\R^d\times\R$, then
$\{g_{s,u}:(s,u)\in B\}\in\cX_0$.

Let $\lambda$ be the Lebesgue measure on $\R^d\times\R$ restricted to
$\BH$, so that $\nu$ and $\theta$ are Lebesgue measures on $\R^d$
and $\R$, respectively.  Let $\eta$ be a Poisson process with intensity measure
$\lambda$, and fix a function
$f\in L^1(\lambda)\cap L^2(\lambda)$.  The Poisson hull estimator
\eqref{eq:26} for $F=\int fd\lambda=\int \tilde{f}(s,\phi(s))\,ds$
with $\tilde{f}$ defined at \eqref{eq:tilde-f} is given by
\begin{equation}
  \label{eq:estim-Hoelder}
  \hat{F}(\eta)=\int \tilde{f}\big (s,(\sup\eta)(s) \big)\,ds
  +\int \I \big\{u=(\sup\eta)(s) \big\}f(s,u)\, \eta(d(s,u)).
\end{equation}
Here we have used that $(\sup\eta)^-=\sup\eta$ and that
$(s,u)\in\partial\eta$ if and only if $u=(\sup\eta)(s)$.

In the following write $H_{s,u}(\eta)\equiv H_x(\eta)$ for $x=(s,u)$, let
$\kappa_d$ be the volume of the unit ball in $\R^d$, and denote
$u\vee v:=\max(u,v)$. The next result is proved in the supplement. 

\begin{lemma}
  \label{lemma:H-bound}
  For all $s\in\R^d$ and $u\geq 0$, we have 
  \begin{equation}
    \label{eq:3}
    \frac{\beta\kappa_d}{d+\beta}(2R)^{-d/\beta} u^{(d+\beta)/\beta}
    \leq -\log \BE H_{s,\phi(s)-u}(\eta) \leq 
    \frac{\beta\kappa_d}{d+\beta}(R-R')^{-d/\beta} u^{(d+\beta)/\beta},
  \end{equation}
  where the right-hand side is set to be infinite if $R=R'$. 
  Furthermore, for $r\in\R^d$ and $v\geq0$,
  \begin{equation}
    \label{eq:4}
    \frac{d\kappa_d}{d+\beta}(2R)^{-d/\beta} (u\vee
    v)^{(d+\beta)/\beta}\leq
    -\log \BE\Big[H_{s,\phi(s)-u}(\eta)H_{r,\phi(r)-v}(\eta)\Big].
  \end{equation}
\end{lemma}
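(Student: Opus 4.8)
The guiding idea is that the generator \eqref{eq:24} associated with the functions \eqref{eq:18} is prime (Lemma~\ref{lemma:f-prime}), so the expectations in question become deterministic $\lambda$-volumes via the exponential formula $\BE\prod_{z\in\eta}g(z)=\exp[-\int(1-g)\,d\lambda]$ for $\{0,1\}$-valued $g$ (see Section~\ref{sec:vari-poiss-hull}), and both estimates then reduce to computing a volume integral over $\R^d$. Two facts from the proof of Lemma~\ref{lemma:f-prime} will be used throughout: $g_{s,w}\le g_{s',u'}$ holds iff $w\le u'-R\|s-s'\|^\beta$, and, since $\phi$ is $\beta$-H\"older with constant $R'\le R$, the set $\BH$ equals $\{(s',u')\in\R^d\times\R:u'\le\phi(s')\}$, on which $\lambda$ is Lebesgue measure. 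Write $x:=(s,\phi(s)-u)$.

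\emph{The bound \eqref{eq:3}.} By the prime property $H_x(\eta)=\prod_{z\in\eta}H_x(\delta_z)$, so $-\log\BE H_x(\eta)=\lambda(\{z:x\in[\delta_z]\})$. Here $x\in[\delta_{(s',u')}]$ precisely when $g_x\le g_{(s',u')}$, i.e.\ $\phi(s)-u\le u'-R\|s-s'\|^\beta$; intersecting with $\{u'\le\phi(s')\}$ and integrating out $u'$ yields
\[
-\log\BE H_{s,\phi(s)-u}(\eta)=\int_{\R^d}\big(\phi(s')-\phi(s)+u-R\|s-s'\|^\beta\big)_+\,ds'.
\]
By \eqref{eLipschitz} and $R'\le R$, the integrand lies between $\big(u-2R\|s-s'\|^\beta\big)_+$ and $\big(u-(R-R')\|s-s'\|^\beta\big)_+$, both radial, so \eqref{eq:3} follows from the elementary polar-coordinate identity $\int_{\R^d}(u-c\|t\|^\beta)_+\,dt=\frac{\beta\kappa_d}{d+\beta}\,c^{-d/\beta}u^{(d+\beta)/\beta}$, valid for $c>0$ and read as $+\infty$ when $c=0$.

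\emph{The bound \eqref{eq:4}.} Writing $y:=(r,\phi(r)-v)$ and using the prime property once more, $H_x(\eta)H_y(\eta)=\prod_{z\in\eta}H_x(\delta_z)H_y(\delta_z)$, hence $-\log\BE[H_x(\eta)H_y(\eta)]=\lambda(A\cup B)$ with $A:=\{z\in\BH:g_x\le g_z\}$ and $B:=\{z\in\BH:g_y\le g_z\}$. The key structural point is that for each fixed $s'$ the $u'$-sections of $A$ and of $B$ are intervals sharing the right endpoint $\phi(s')$; consequently the section of $A\cup B$ over $s'$ is again such an interval, of length $\max\{h_x(s'),h_y(s')\}$ with $h_x(s'):=(\phi(s')-\phi(s)+u-R\|s-s'\|^\beta)_+$ and $h_y$ its analogue for $(r,v)$, so that
\[
-\log\BE[H_x(\eta)H_y(\eta)]=\int_{\R^d}\max\{h_x(s'),h_y(s')\}\,ds'.
\]
Taking $u\ge v$ without loss of generality, \eqref{eLipschitz} gives $h_x(s')\ge(u-2R\|s-s'\|^\beta)_+$ and $h_y(s')\ge(v-2R\|s'-r\|^\beta)_+$, and it remains to show that $\int_{\R^d}\max\{h_x,h_y\}\,ds'\ge\frac{d\kappa_d}{d+\beta}(2R)^{-d/\beta}u^{(d+\beta)/\beta}$. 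I expect this lower bound to be the main obstacle: a single truncated cone centred at $s$ only produces the weaker constant $\frac{\beta\kappa_d}{d+\beta}$, so one must genuinely exploit the $\max$-structure and the way the two truncated cones (about $s$ and $r$) jointly sit below $\phi$, partitioning $\R^d$ accordingly and applying the polar-coordinate computation on each part.
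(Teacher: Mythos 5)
Your derivation of \eqref{eq:3} is correct and, up to cosmetics, identical to the paper's: you arrive at the same identity
\[
-\log \BE H_{s,\phi(s)-u}(\eta)=\int_{\R^d}\bigl(\phi(s')-\phi(s)+u-R\|s-s'\|^\beta\bigr)_+\,ds'
\]
(you via the prime property and $-\log\BE H_x(\eta)=\int\bH_x(\delta_z)\,\lambda(dz)$, the paper by writing $H_x(\eta)=\I\{\eta(A_{s,u})=0\}$ and taking the void probability; these are the same computation), and then apply \eqref{eLipschitz} and the polar-coordinate formula $\int_{\R^d}(u-c\|t\|^\beta)_+\,dt=\frac{\beta\kappa_d}{d+\beta}c^{-d/\beta}u^{(d+\beta)/\beta}$. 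Fine.

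For \eqref{eq:4} your setup is again the same as the paper's: by the prime property (or directly) one gets
\[
-\log\BE\bigl[H_x(\eta)H_y(\eta)\bigr]=\lambda(A\cup B)=\int_{\R^d}\max\{h_x(s'),h_y(s')\}\,ds',
\]
and the H\"older bound gives $h_x(s')\ge(u-2R\|s-s'\|^\beta)_+$, $h_y(s')\ge(v-2R\|r-s'\|^\beta)_+$. Where you get stuck is a false alarm. The paper at this point simply uses the elementary inequality $\int\max\{h_x,h_y\}\ge\max\bigl\{\int h_x,\int h_y\bigr\}$, i.e. bounds the maximum below by either one of the truncated cones, and translates each to the origin; this yields the constant $\tfrac{\beta\kappa_d}{d+\beta}(2R)^{-d/\beta}(u\vee v)^{(d+\beta)/\beta}$, \emph{not} the $\tfrac{d\kappa_d}{d+\beta}$ that appears in the displayed statement \eqref{eq:4}. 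In other words, the paper's own proof of \eqref{eq:4} establishes exactly what your single-cone argument gives, and the factor $d$ in the displayed lemma is almost certainly a typo for $\beta$ (the two letters play symmetric roles in the exponent $(d+\beta)/\beta$, which makes the slip easy). Nothing downstream is affected: in the proof of Theorem~\ref{tcltfunction} the bound \eqref{eq:4} is used only through the exponential $\exp[-c_2 t(u\vee v)^{(d+\beta)/\beta}]$, where the explicit constant is absorbed into $c_2$. So you should not try to ``genuinely exploit the max-structure''; just drop one term of the max and accept the constant $\tfrac{\beta\kappa_d}{d+\beta}$, as the paper itself does.
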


For functions $h_1(u)$ and $h_2(u)$, $u\in\R$, we write $h_1\asymp
h_2$ as $u\to u_0$ (where $u_0$ may be infinite) if 
\begin{displaymath}
  0<c_1\leq \liminf_{u\to u_0} h_1(u)/h_2(u)
  \leq \limsup_{u\to u_0} h_1(u)/h_2(u)\leq c_2<\infty
\end{displaymath}
for some constants $c_1$ and $c_2$. The function $h_1$ is said to grow
at most polynomially if there exists a $p\geq 0$ such that
$h_1(u)u^{-p}$ is bounded on $[\eps,\infty)$ for some
$\eps>0$. The following result is a Tauberian style statement on the
asymptotic behaviour of integrals. Its straightforward proof is omitted. 

\begin{lemma}
  \label{lemma:Tauber}
  Let $h:(0,\infty)\to\R_+$ be a function which grows at most
  polynomially. Assume that $h(u)\asymp u^{\gamma-1}$ as $u\to0$ for
  some $\gamma>0$. Then,
  \begin{equation}
    \label{eq:9}
    \int_0^\infty h(u)e^{-ctu} du
    \asymp t^{-\gamma}\quad \text{as}\;\; t\to\infty,
  \end{equation}
  and, if $\tau>0$, 
  \begin{displaymath}
    H(v):=\int_0^v h(u)(v-u)^{\tau-1} du\asymp v^{\gamma+\tau-1}\quad \text{as}\;\;
    v\to0. 
  \end{displaymath}
  Furthermore, $H$ grows at most polynomially. 
\end{lemma}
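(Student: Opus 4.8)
The plan is to prove the three assertions separately, in each case splitting the range of integration into a neighbourhood of $0$, where the hypothesis $h(u)\asymp u^{\gamma-1}$ is in force, and a tail, where the at-most-polynomial growth of $h$ is balanced against the decay supplied by the exponential (and, for $H$, handled by a further split).

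For \eqref{eq:9} I would fix $\delta>0$ small enough that $c_1u^{\gamma-1}\le h(u)\le c_2u^{\gamma-1}$ on $(0,\delta)$ with $0<c_1\le c_2<\infty$ (possible since $\liminf_{u\to0}h(u)u^{1-\gamma}>0$ and the corresponding $\limsup$ is finite), and write $\int_0^\infty=\int_0^\delta+\int_\delta^\infty$. On $(0,\delta)$ the change of variables $w=ctu$ gives $\int_0^\delta u^{\gamma-1}e^{-ctu}\,du=(ct)^{-\gamma}\int_0^{ct\delta}w^{\gamma-1}e^{-w}\,dw$, which for large $t$ lies between $(ct)^{-\gamma}\int_0^1w^{\gamma-1}e^{-w}\,dw$ and $(ct)^{-\gamma}\Gamma(\gamma)$; both bounding constants are finite and strictly positive because $\gamma>0$. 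For the tail, polynomial growth yields $h(u)\le C(1+u^p)$ for $u\ge\delta$, so for $t\ge1$ one has $\int_\delta^\infty h(u)e^{-ctu}\,du\le e^{-c\delta t/2}\int_0^\infty C(1+u^p)e^{-cu/2}\,du$, which is $o(t^{-\gamma})$. Since $h\ge0$, the tail only helps the lower bound, and combining the two pieces gives \eqref{eq:9}.

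For the second assertion I would restrict to $0<v<\delta$, so that $c_1u^{\gamma-1}\le h(u)\le c_2u^{\gamma-1}$ holds throughout $(0,v)$; then $H(v)$ is squeezed between $c_1$ and $c_2$ times $\int_0^v u^{\gamma-1}(v-u)^{\tau-1}\,du$, and the substitution $u=vw$ evaluates this integral to $v^{\gamma+\tau-1}B(\gamma,\tau)$, where $B(\gamma,\tau)=\int_0^1 w^{\gamma-1}(1-w)^{\tau-1}\,dw\in(0,\infty)$ since $\gamma,\tau>0$; this is $H(v)\asymp v^{\gamma+\tau-1}$. For the last assertion, for large $v$ I would split $H(v)=\int_0^{v/2}+\int_{v/2}^v$. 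On $[0,v/2]$ one has $(v-u)^{\tau-1}\le\max(1,2^{1-\tau})v^{\tau-1}$ while $\int_0^{v/2}h(u)\,du\le C(1+v^{p+1})$ (the part near $0$ is a finite constant because $u^{\gamma-1}$ is integrable at $0$, the rest being bounded by $\int_\delta^{v/2}C(1+u^p)\,du$), so this piece is $O(v^{p+\tau})$; on $[v/2,v]$, once $v/2\ge\eps$ we have $h(u)\le Cv^p$, and $\int_{v/2}^v(v-u)^{\tau-1}\,du=\tau^{-1}(v/2)^{\tau}$, so this piece is also $O(v^{p+\tau})$. Hence $H(v)\le Cv^{p+\tau}$ for large $v$, i.e., $H$ grows at most polynomially.

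I do not expect a genuine obstacle here; the only points needing mild care are choosing the constants hidden in $\asymp$ as explicit finite positive bounds, and the tacit assumption that $h$ is bounded on compact subintervals of $(0,\infty)$ --- needed to pass from ``$h(u)u^{-p}$ bounded on $[\eps,\infty)$'' together with ``$h(u)\asymp u^{\gamma-1}$ near $0$'' to the global bound $h(u)\le C(1+u^p)$ used above --- which holds in all intended applications.
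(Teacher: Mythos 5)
The paper omits its own proof (``Its straightforward proof is omitted''), so there is nothing to compare against; the task reduces to checking correctness, and your argument is sound and complete. Each of the three claims is handled cleanly by the standard split into a small interval near zero (where $h(u)\asymp u^{\gamma-1}$ provides two-sided bounds, reducing the computation to a Gamma respectively Beta integral after rescaling) and a remainder. For \eqref{eq:9} the tail bound $\int_\delta^\infty h(u)e^{-ctu}\,du\le e^{-c\delta t/2}\int_0^\infty C(1+u^p)e^{-cu/2}\,du$ is valid for $t\ge 1$, $u\ge\delta$ since $2tu\ge\delta t+u$ there, and exponential decay beats $t^{-\gamma}$; since $h\ge0$ the tail never harms the lower bound. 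For $H(v)\asymp v^{\gamma+\tau-1}$, restricting to $v<\delta$ makes the two-sided bound on $h$ available over the whole integration range, and the rescaling $u=vw$ immediately yields $B(\gamma,\tau)v^{\gamma+\tau-1}$. For the polynomial growth of $H$, the $[0,v/2]$/$[v/2,v]$ split correctly exploits that $(v-u)^{\tau-1}\le\max(1,2^{1-\tau})v^{\tau-1}$ on the first half and $\int_{v/2}^v(v-u)^{\tau-1}\,du=\tau^{-1}(v/2)^\tau$ on the second, giving $H(v)=O(v^{p+\tau})$ for large $v$.

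Your final caveat is apt and worth retaining: the paper's definition of polynomial growth only gives control on $[\eps,\infty)$ for \emph{some} $\eps>0$, and the near-zero asymptotics only on some $(0,\delta)$; if $\delta<\eps$ there is a middle interval on which $h$ is a priori unconstrained, and one must tacitly assume local boundedness (or at least local integrability) there for the integrals in the lemma to be finite at all. This is implicitly assumed by the authors, consistent with the phrase ``straightforward proof is omitted,'' and holds in every application made in the paper.
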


As in Section~\ref{sec:centr-limit-theor}, denote by $\eta_t$ a
Poisson process with intensity measure $t\lambda$ for $t>0$, and
define $\hat{F}_t:=\hat{F}(\eta_t)$, see \eqref{eq:estim-Hoelder}.
Under suitable assumptions on $f$ we shall derive the variance
asymptotics and a central limit theorem for $\hat{F}_t$. For a
function $f\in L^1(\lambda)\cap L^2(\lambda)$, denote
\begin{displaymath}
  f_i(u):=\int \big|f(s,\phi(s)-u) \big|^i 
  \,ds,
  \quad u>0,\;i=2,3,4.
\end{displaymath}

\begin{theorem}
  \label{t-bounds-var} 
  For all $t>0$,
  \begin{multline}
    \label{eq:2}
    t\frac{\beta}{d+\beta}\int_0^\infty 
    f_2(v^{\beta/(d+\beta)})v^{-d/(d+\beta)} e^{-atv}\, dv
    \leq \BV \hat{F}_t\\ \le
    t\frac{\beta}{d+\beta}\int_0^\infty 
    f_2(v^{\beta/(d+\beta)})v^{-d/(d+\beta)} e^{-btv}\, dv,
  \end{multline}
  where 
  \begin{align*}
    a:=\frac{\beta\kappa_d}{d+\beta}(R-R')^{-d/\beta},\quad
    b:=\frac{\beta\kappa_d}{d+\beta}(2R)^{-d/\beta}.
  \end{align*}
\end{theorem}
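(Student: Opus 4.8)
The plan is to reduce everything to the prime-form variance identity together with the pointwise bounds of Lemma~\ref{lemma:H-bound}. First I would invoke Theorem~\ref{tvariance}, applied to the Poisson process $\eta_t$ of intensity $t\lambda$ (as already recorded in Section~\ref{sec:centr-limit-theor}), to write
\[
  \BV\hat F_t = t\int f(x)^2\,\BE H_x(\eta_t)\,\lambda(dx).
\]
Since $\varpi\equiv-\infty$ in this section, $\lambda$ is Lebesgue measure on $\BH=\{(s,u):u\le\phi(s)\}$, and for each fixed $s$ the substitution $u=\phi(s)-w$, $w\ge 0$, is exact (as $u$ ranges over $(-\infty,\phi(s)]$). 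This turns the right-hand side into
\[
  \BV\hat F_t = t\int_{\R^d}\int_0^\infty f\big(s,\phi(s)-w\big)^2\,\BE H_{s,\phi(s)-w}(\eta_t)\,dw\,ds .
\]

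Next I would bound $\BE H_{s,\phi(s)-w}(\eta_t)$ uniformly in $s$. By Lemma~\ref{lemma:f-prime} the generator has the prime property, so $\BE H_x(\eta_t)=\exp[-t\int\bH_x(\delta_y)\,\lambda(dy)]$ and hence $-\log\BE H_x(\eta_t)=t\,(-\log\BE H_x(\eta))$. Plugging this scaling relation into estimate~\eqref{eq:3} of Lemma~\ref{lemma:H-bound} gives, for all $s\in\R^d$ and $w\ge 0$,
\[
  e^{-a\,t\,w^{(d+\beta)/\beta}}\ \le\ \BE H_{s,\phi(s)-w}(\eta_t)\ \le\ e^{-b\,t\,w^{(d+\beta)/\beta}},
\]
with $a,b$ the constants of the statement (note that $a\ge b$, so the two inequalities are compatible; when $R=R'$ the lower bound is simply $0$). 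As the integrand is nonnegative, Tonelli's theorem lets me integrate in $s$ first, and with $f_2(w)=\int_{\R^d}|f(s,\phi(s)-w)|^2\,ds$ this yields
\[
  t\int_0^\infty f_2(w)\,e^{-a\,t\,w^{(d+\beta)/\beta}}\,dw\ \le\ \BV\hat F_t\ \le\ t\int_0^\infty f_2(w)\,e^{-b\,t\,w^{(d+\beta)/\beta}}\,dw .
\]

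Finally, the change of variables $v=w^{(d+\beta)/\beta}$, equivalently $w=v^{\beta/(d+\beta)}$ with $dw=\frac{\beta}{d+\beta}\,v^{-d/(d+\beta)}\,dv$, converts both sides into the expressions displayed in \eqref{eq:2}, which completes the proof. The whole argument is essentially bookkeeping: all the geometric content is already packaged in Lemma~\ref{lemma:H-bound}, and the only two points that warrant a moment's care are the scaling relation $-\log\BE H_x(\eta_t)=t(-\log\BE H_x(\eta))$---which is precisely where the prime property from Lemma~\ref{lemma:f-prime} enters---and the interchange of the $s$- and $w$-integrations, which is legitimate by nonnegativity; the standing hypothesis $f\in L^1(\lambda)\cap L^2(\lambda)$ then guarantees that these integrals, and hence $\BV\hat F_t$, are finite. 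I do not expect any genuine obstacle.
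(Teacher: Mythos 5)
Your proof is correct and follows essentially the same route as the paper: invoke Theorem~\ref{tvariance}, shift $u\mapsto\phi(s)-u$, plug in the exponential bounds of Lemma~\ref{lemma:H-bound} scaled by $t$, integrate out $s$ to produce $f_2$, and finish with $v=w^{(d+\beta)/\beta}$. The only cosmetic difference is that you justify the $t$-scaling of the exponent explicitly via the prime property (Lemma~\ref{lemma:f-prime}), whereas the paper simply notes that the bounds of Lemma~\ref{lemma:H-bound} carry over to $\eta_t$ with an extra factor $t$ (which is immediate from the void-probability computation in that lemma's proof).
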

\begin{proof}
  By Theorem~\ref{tvariance}, 
  \begin{align*}
    \BV \hat{F}_t&=t\iint \I \big\{u\le\phi(s) \big\}f(s,u)^2 \BE
    H_{s,u}(\eta_t) \,ds\,du\\
    &=t\iint \I\{u\ge 0\}f \big (s,\phi(s)-u \big)^2 \BE
    H_{s,\phi(s)-u}(\eta_t)\,ds\,du.
  \end{align*}
  Since Lemma~\ref{lemma:H-bound} applies to $\eta_t$
  with all bounds multiplied by $t$, we have 
  \begin{align*}
    \BV \hat{F}_t
    &\le t \int \int^\infty_0 f \big (s,\phi(s)-u \big)^2 
    \exp\big[-b t u^{(d+\beta)/\beta}\big] du\,ds\\
    &= t\int^\infty_0  f_2(u)
    \exp\big[-b t u^{(d+\beta)/\beta}\big]\,du.
  \end{align*}
  A change of variables yields the upper bound in \eqref{eq:2}.
  To derive the lower bound, we rely on the upper bound in 
  Lemma~\ref{lemma:H-bound} and proceed as above.
\end{proof}

Under the polynomial growth assumption,
the asymptotic behaviour of the variance as $t\to\infty$ is determined
by the behaviour of the function $f_2(u)$ as $u\to0$.

\begin{corollary}
  \label{cor:bound}
  Assume that $R'<R$, that $f_2$ grows at most polynomially, and
  $f_2(u)\asymp u^{\gamma-1}$ as $u\to0$ for some $\gamma>0$.  Then
  there exist $c_1,c_2>0$ such that
  \begin{displaymath}
    c_1t^{1-\gamma\beta/(d+\beta)}\leq \BV \hat{F}_t
    \le c_2t^{1-\gamma\beta/(d+\beta)},\quad t\geq 1.
  \end{displaymath}
\end{corollary}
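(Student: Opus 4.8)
The plan is to feed the two-sided bound of Theorem~\ref{t-bounds-var} into the Tauberian Lemma~\ref{lemma:Tauber}. First I would rewrite that bound in terms of the single auxiliary function
$$h(v):=f_2\big(v^{\beta/(d+\beta)}\big)\,v^{-d/(d+\beta)},\qquad v>0,$$
so that Theorem~\ref{t-bounds-var} reads
$$t\,\frac{\beta}{d+\beta}\int_0^\infty h(v)\,e^{-atv}\,dv\;\le\;\BV\hat{F}_t\;\le\;t\,\frac{\beta}{d+\beta}\int_0^\infty h(v)\,e^{-btv}\,dv,$$
with $a,b$ the constants of that theorem; the hypothesis $R'<R$ guarantees $a<\infty$, and in particular $R>0$, so $b<\infty$ as well, and both are strictly positive.

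Next I would check that $h$ satisfies the hypotheses of Lemma~\ref{lemma:Tauber}. Polynomial growth of $h$ follows from that of $f_2$ after the substitution $u=v^{\beta/(d+\beta)}$, the extra factor $v^{-d/(d+\beta)}$ only shifting the exponent. For the behaviour at the origin, $f_2(u)\asymp u^{\gamma-1}$ as $u\to 0$ gives $h(v)\asymp v^{\beta(\gamma-1)/(d+\beta)-d/(d+\beta)}$ as $v\to0$, and the one computation I would carry out carefully is to simplify this exponent to $\gamma'-1$ with $\gamma':=\beta\gamma/(d+\beta)>0$; this is exactly what produces the exponent $1-\gamma\beta/(d+\beta)$ in the conclusion. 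Applying Lemma~\ref{lemma:Tauber} to $h$ once with $c=a$ and once with $c=b$ then yields $\int_0^\infty h(v)e^{-atv}\,dv\asymp t^{-\gamma'}$ and $\int_0^\infty h(v)e^{-btv}\,dv\asymp t^{-\gamma'}$ as $t\to\infty$; substituting into the display above and using $t\cdot t^{-\gamma'}=t^{1-\gamma\beta/(d+\beta)}$ gives the claimed bounds for all $t\ge T$, for some $T\ge1$.

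The only remaining point is to cover the compact range $t\in[1,T]$. For each fixed $t>0$ the two integrals above are finite --- integrability at $0$ because $\gamma'>0$, at $\infty$ from the polynomial growth of $h$ against the exponential factor --- and strictly positive, and they depend continuously on $t$ by dominated convergence. Hence the two bounding functions in Theorem~\ref{t-bounds-var} stay between positive constants on $[1,T]$, and so does $t^{1-\gamma\beta/(d+\beta)}$; enlarging $c_1,c_2$ extends the inequality to all $t\ge1$. I do not expect a genuine obstacle here: the substance is the verification of the hypotheses of Lemma~\ref{lemma:Tauber} and the bookkeeping of exponents, with the small-$t$ continuity and compactness argument being the only step beyond the routine.
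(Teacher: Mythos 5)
Your proposal is correct and follows essentially the same route as the paper: rewrite the two-sided bound of Theorem~\ref{t-bounds-var} so that Lemma~\ref{lemma:Tauber} applies with $\gamma$ replaced by $\beta\gamma/(d+\beta)$ to get the asymptotics as $t\to\infty$, then extend to $[1,T]$ by continuity and positivity of the bounding integrals. The only difference is that you spell out the exponent bookkeeping and the verification of the Tauberian hypotheses explicitly, whereas the paper leaves these routine checks implicit.
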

\begin{proof}
  The asymptotic behaviour
  $\BV \hat{F}_t\asymp t^{1-\gamma\beta/(d+\beta)}$ as $t\to\infty$
  follows from \eqref{eq:2} and Lemma~\ref{lemma:Tauber} with $\gamma$
  replaced by $\beta\gamma/(d+\beta)$.
  Therefore, the above inequalities hold for
  $t\geq c$ with a sufficiently large $c$. They can be extended to
  $[1,c]$, since the bounds in Theorem~\ref{t-bounds-var} are continuous in
  $t>0$ and positive for all $t>0$.
\end{proof}

We continue with a quantitative central limit theorem. 
The proof of the following result relies on
Corollary~\ref{cor:normal-prime} and is given in the supplement.  As
before, we denote $\sigma_t^2:=\BV \hat{F}_t$.

\begin{theorem}
  \label{tcltfunction} 
  Assume that $R'<R$, the functions $f_i$, $i=1,\dots,4$, grow at most
  polynomially, and that
  \begin{equation}
    \label{eq:7}
    a_i\leq \liminf_{u\downarrow 0} u^{1-\gamma}f_i(u)\leq
    \limsup_{u\downarrow 0} u^{1-\gamma}f_i(u)\leq b_i, \quad i=2,3,4
  \end{equation}
  for some $\gamma\in(0,1]$ and with $0\leq a_i\leq b_i<\infty$ and
  $a_2>0$.  Then there exists a $c>0$ depending on $f$, $d$, $\beta$,
  and $R,R'$, such that
  \begin{displaymath}
    d_W\big(\sigma^{-1}_t(\hat{F}_t-tF),N\big)
    \le c t^{-1/2(1-\gamma\beta/(d+\beta))},\quad t\ge 1.
  \end{displaymath}
\end{theorem}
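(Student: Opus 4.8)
The plan is to derive the rate from Corollary~\ref{cor:normal-prime}, so the first task is to verify that its hypotheses hold in the present H\"older setting. Since the generator has the prime property by Lemma~\ref{lemma:f-prime}, we only need to check the integrability condition \eqref{eq:int-prime}, i.e.\ that $\int f(y)^2 h_0(y)^2\,\BE H_y(\eta)\,\lambda(dy)<\infty$. Using the coordinates $y=(s,\phi(s)-u)$ and the bounds of Lemma~\ref{lemma:H-bound} (which give exponential decay $\BE H_y(\eta)\le \exp[-b\,u^{(d+\beta)/\beta}]$ after rescaling, and, for $h_0$, a polynomial-in-$u$ upper bound obtained by estimating the $\lambda$-measure of the ``domination cone'' $\{x:\bH_y(\delta_x)=1\}$), this integral reduces to a one-dimensional integral of the form $\int_0^\infty f_2(u)\,(\text{poly in }u)\,e^{-b't u^{(d+\beta)/\beta}}\,du$, which is finite under the polynomial-growth assumption on the $f_i$ and the behaviour \eqref{eq:7} at $0$. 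So \eqref{eq:int-prime} holds.

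Next I would estimate each of the four terms $T_1(t),T_3(t),T_4(t),T_5(t)$ from Corollary~\ref{cor:normal-prime}. The key point is that in the prime setting all the expectations appearing there are of the form $\BE H_y(\eta_t)$ or $\BE[H_x(\eta_t)H_y(\eta_t)]$, for which Lemma~\ref{lemma:H-bound} provides the exponential upper bounds \eqref{eq:3}--\eqref{eq:4} (with the intensity multiplied by $t$), while the $h_i(y)$ factors are controlled polynomially in $u$ via the cone-volume computation. After the change of variables $u\mapsto v=u^{(d+\beta)/\beta}$, every term becomes (a power of $t$ times) a one-dimensional Laplace-type integral $\int_0^\infty (\text{regularly varying at }0)\,e^{-c t v}\,dv$, and Lemma~\ref{lemma:Tauber} identifies its order in $t$. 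The powers of $t$ bookkeeping is: $\sigma_t^2\asymp t^{1-\gamma\beta/(d+\beta)}$ by Corollary~\ref{cor:bound}; $T_3\sim t\sigma_t^{-3}\cdot t^{-\gamma_3}$ for the appropriate exponent $\gamma_3$ coming from $f_3$; and similarly $T_1,T_4,T_5$ carry the explicit prefactors $t^{3/2}\sigma_t^{-2}$, $t^2\sigma_t^{-3}$, $t^3\sigma_t^{-3}$ against integrals whose $t$-orders come again from Lemma~\ref{lemma:Tauber}. One checks that each resulting exponent is at most $-\tfrac12(1-\gamma\beta/(d+\beta))$, with the slowest (dominant) term dictating the stated rate.

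The main obstacle I expect is the careful bookkeeping of the many powers of $t$, $u$ and the exponents $\gamma$, together with making the ``$h_0,h_1,h_2$ grow polynomially'' claim precise. Concretely, for $T_1$ one must handle the two-point expectation $\BE[H_x(\eta_t)H_y(\eta_t)]$ together with the extra indicators $\bH_x(\delta_z)\bH_y(\delta_z)$: integrating out $z$ produces exactly a factor like $h_0$ evaluated at a combination of the two depth parameters, and one needs \eqref{eq:4} to dominate the diagonal contribution; getting a clean bound here, uniformly in the relative position of $x$ and $y$, is the delicate step. Once the pointwise exponential/polynomial bounds are in place, reducing everything to scalar integrals and invoking Lemma~\ref{lemma:Tauber} is routine; I would organise the proof as: (i) verify \eqref{eq:int-prime}; (ii) bound $h_0,h_1,h_2$ by explicit powers of the depth $u$; (iii) bound each $T_j(t)$ by a scalar Laplace integral; (iv) apply Lemma~\ref{lemma:Tauber} and Corollary~\ref{cor:bound} to read off the exponent of $t$; (v) take the maximum of the exponents and conclude.
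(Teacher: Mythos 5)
Your plan matches the paper's proof almost step for step: it invokes Corollary~\ref{cor:normal-prime} (justified by the prime property from Lemma~\ref{lemma:f-prime}), controls $h_0,h_1,h_2$ polynomially and $\BE H_y(\eta_t)$, $\BE[H_x(\eta_t)H_y(\eta_t)]$ exponentially via Lemma~\ref{lemma:H-bound}, reduces every $T_j(t)$ to a one-dimensional Laplace-type integral by change of variables, and reads off the powers of $t$ from Lemma~\ref{lemma:Tauber} together with the variance asymptotics of Corollary~\ref{cor:bound}. The only step you describe loosely but don't resolve --- getting a uniform bound for the two-point term $T_1$ --- is handled in the paper by a Cauchy--Schwarz estimate in the spatial variable followed by a split of the integral according to whether $u\vee v$ is small or large; with that filled in, your outline is the paper's argument.
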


\begin{remark}
  Assume that $\int |f(s,\phi(s))|\,\nu(ds)>0$ and
  $f(s,\phi(s)-u)\to f(s,\phi(s))$ as $u\downarrow 0$ for $\nu$-a.e.\
  $s$.  If the function
  $s\mapsto \sup_{u\leq \phi(s)}|f(x,\phi(s)-u)|$ belongs to
  $L^2(\lambda)\cap L^4(\lambda)$ then condition \eqref{eq:7} is
  satisfied with $\gamma=1$. This holds in particular if $\nu$ is
  finite and $f$ is bounded.
\end{remark}

\begin{remark}
  Note that the rate in the Wasserstein distance is $\sigma_t^{-1}$,
  which indicates that it is likely optimal.  It is possible to bound
  the terms, assuming that \eqref{eq:7} holds with $\gamma_i$ instead
  of $\gamma$, so that the asymptotic behaviour of $f_i$ varies with
  $i$. However, then the obtained rates in the central limit theorem
  are no longer of the order $\sigma_t^{-1}$ and so are not
  necessarily optimal.
\end{remark}

\begin{remark}
  A central limit theorem for $\hat{F}_t$ for functions defined on the
  unit interval $[0,1]$ and in case of $\sigma_t^2$ growing as
  $\sqrt{t}$ was presented in \cite[Theorem~3.4]{ReissSelk17}. Our
  result provides the rate, holds in general dimension, applies for
  functions defined on the whole space,
  and does not rely on this
  particular rate for the variance. As described in
  Remark~\ref{rem:Kolm}, it is possible to show that the same rate
  holds for the Kolmogorov distance between
  $\sigma^{-1}_t(\hat{F}_t-tF)$ and $N$.
\end{remark}

\begin{example}
  Let $f(s,u)=\I\{u\geq 0\}$. If $\phi$ is integrable, then
  $f\in L^1(\lambda)\cap L^2(\lambda)$ and $F=\int \phi(s)ds$. In this
  case, $f_i(u)=\nu(\{s: \phi(s)\geq u\})$ for all $i\geq1$, where
  $\nu$ is the Lebesgue measure.  For instance, \eqref{eq:7} holds
  with $\gamma=1$ if $\nu(\{s\in\R:\varphi(s)>0\})<\infty$.
  Essentially, this is the setting of \cite{ReissSelk17}, where
  functions on $[0,1]$ have been
  considered. 
  If $\nu(\{s\in\R:\varphi(s)>0\})=\infty$, then $f_2(u)\to\infty$ as
  $u\downarrow0$.  Since
  \begin{displaymath}
    \int \phi(s)\,ds=\iint \I \big\{u\leq\phi(s) \big\}\, duds
    =\int \nu \big (\{s:\phi(s)\geq u\}\big)\, du<\infty,
  \end{displaymath}
  the function $f_2$ is integrable near zero and converges to
  infinity. Assume that $f_2(u)\asymp u^{\gamma-1}$, where necessarily
  we have $\gamma\in(0,1)$. Since $f_2$ is decreasing, the polynomial
  growth condition is evidently satisfied. Theorem~\ref{tcltfunction}
  yields a rate of convergence of the normalised estimation error to
  the normal distribution.
  Consider, for instance, the Lipschitz function
  $\phi(s)=1/(1+s^2)$. Then $f_2(u)\asymp u^{-1/2}$, so that
  $\gamma=1/2$.
\end{example}

\begin{example}
  Let $f(s,u)=pu_+^{p-1}$ with $p>1$, and assume that $\phi$ is a
  $p$-integrable and $(2p-1)$-integrable nonnegative function to
  ensure that $f\in L^1(\lambda)\cap L^2(\lambda)$. Then
  $F=\int \phi(s)^pds$. In this case,
  \begin{displaymath}
    f_i(u)=p^i \int \big (\phi(s)-u \big)_+^{i(p-1)}\, ds.
  \end{displaymath}
  Then \eqref{eq:7} holds with $\gamma=1$ if $\phi$ is $2(p-1)$- and
  $4(p-1)$-integrable. Noticing the previous integrability condition,
  we need to impose that $\phi$ is integrable of the orders
  $\min(p,2p-2)$ and $\max(4(p-1),2p-1)$. If $\phi$ is bounded, we
  need only integrability of the order $\min(p,2p-2)$. 
\end{example}

\subsection{Families of functions related to convex bodies}
\label{sec:famil-funct-relat}

In this subsection we specify the setting of
Subsection~\ref{sec:param-famil-funct} to functions describing convex
bodies.  This can be most conveniently done by using the \emph{support
  function}
\begin{displaymath}
  h_K(s):=\sup\{\langle s,x\rangle: x\in K\},\quad s\in\BS, 
\end{displaymath}
of a nonempty convex set $K\subset\R^d$, where $\BS$ is the unit
sphere in $\R^d$ and $\langle s,x\rangle$ stands for the scalar
product. We recall that the convex hull of the union of convex bodies
corresponds to taking pointwise maxima of their support functions and
the inclusion of convex bodies is equivalent to inequality between
their support functions.  Another way to describe a convex closed set
$K$ containing the origin is its \emph{radial function}
\begin{align*}
  \rho_K(s):=\sup\{u\ge 0: us\in  K\},\quad s\in\BS.
\end{align*}

\subsubsection{Approximation by convex hulls}

Let $L\subset \R^d$ be a convex body containing the origin.  Define a
family of functions on $\BS$ parametrised by $(s,u)\in \BS\times\R$ as
\begin{displaymath}
  g_{s,u}(r):=\max\big(h_L(r),u\langle s,r\rangle\big),\quad r\in\BS, u\ge0,
\end{displaymath}
and we set $g_{s,u}:=h_L$ if $u<0$. 
The choice $\varpi:=h_L$ provides a lower bound for $g_{s,u}$.
Let $\BX$ be the family of functions $g_{s,u}$.
Condition (F0) follows from the continuity of the parametrisation,
condition (F1) from the continuity of $g_{s,u}$. Furthermore, (F2)
holds since if $u\langle s,r\rangle=u'\langle s',r\rangle$ for $r$
from an open set, we have $(s,u)=(s',u')$.  Assumption (F3$^\prime$)
means that $\cX_0$ is the smallest ring containing
the set $\{h_L\}$ as well as the sets
$\{(s,u)\in\BS\times\R:us\notin L_\eps\}$, $\eps>0$, where $L_\eps$ is
the set of points whose distance from $L$ is at most $\eps$.

For $\mu\in\bN$, let $Z_{\mu}$ denote the convex hull of all points
$us$ such that $g_{s,u}\in\mu$, equivalently, the support function of
$Z_\mu$ is $\sup\mu$.  The generator \eqref{eq:24} defines
$\partial\mu$ as the family of $g_{s,u}\in\mu$ (accounting for
multiplicities) such that $us$ is a vertex of $Z_{\mu}$.
Note that the generator given by \eqref{eq:24} does not have the prime
property. The hull operator $[\mu]$ is the set of all $g_{s,u}$ such
that $us$ belongs to $\conv(L\cup Z_\mu)$ (the
convex hull of the union of $L$ and $Z_\mu$) excluding the vertices. 

Let $K\subset\R^d$ be another convex body with nonempty interior
and such that $L\subset K$,
equivalently, $h_L\leq h_K$.  Set $\phi:=h_K$.
Since $g_{s,u}$ is the support
function of the convex hull of $L$ and the point $us$, we
have $g_{s,u}\leq \phi$ if and only if $us\in K$.
Furthermore, $g_{s,u}=\varpi$ if and only if $us\in L$.
Hence, 
\begin{align*}
  \BH:=\big\{(s,u)\in\BS\times\R:\varpi \ne g_{s,u}\le \phi \big\}=
  \big\{(s,u)\in\BS\times\R:us\in K\setminus L \big\}
\end{align*}
and
\begin{displaymath}
  \phi^-(s)=\sup\{u\in\R: g_{s,u}\leq \phi\}=\sup\{u\ge 0: us\in K\}=\rho_K(s),\quad s\in\BS,
\end{displaymath}
is the radial function of $K$. Similarly, $\varpi_*=\rho_L$ is the
radial function of $L$.

Define a measure $\bar\lambda$ on $\BS\times \R$ by letting
$\bar\lambda(d(s,u)):=\I\{u\ge 0\}\nu(ds) u^{d-1}du$, where $\nu$ is the
$(d-1)$-dimensional Hausdorff measure on $\BS$.  Denote by $\lambda$
the restriction of $\bar\lambda$ onto the set $\BH$, noticing that
each point of this set corresponds to a function from $\BX$. Then 
  $\lambda$ is finite and condition (F4) holds.  Let
$\eta$ be a Poisson process with intensity measure $\lambda$.  Then
$\{us: (s,u)\in\eta\}$ is a homogeneous unit intensity Poisson
process on $K\setminus L$.

Let $f(s,u):=w(s)\beta u^{\beta-d}$, $(s,u)\in\BS\times\R$, with
$\beta>0$ and a function $w\in L^1(\nu)$. Then we have
$f\in L^1(\lambda)$ and the functional \eqref{eq:20} becomes
\begin{equation}
  \label{eq:21}
  F=\int_{\BS}\int_{\rho_L(s)}^{\rho_K(s)}f(s,u) u^{d-1}\, \nu(ds) du
  =\int_{\BS} w(s) \Big(\rho_K(s)^\beta-\rho_L(s)^\beta\Big)\,\nu(ds).
\end{equation}
Since $(\sup\eta)^-$ is the radial function of $\conv(L\cup Z_\eta)$,
the Poisson hull estimator of $F$ is
\begin{displaymath}
  \hat{F}=\int_{\BS} w(s)
  \Big(\rho_{\conv(L\cup Z_\eta)}(s)^\beta-\rho_L(s)^\beta\Big)\,\nu(ds)
  +\int w(s)  \beta u^{\beta-d}\; \partial\eta(d(s,u)). 
\end{displaymath}
If $\beta=d$, $w(s)=1/d$ for all $s$, and $L=\{0\}$, then $F$ equals
the volume $V_d(K)$ of $K$ and $\hat{F}$ is the sum of the volume of
the convex hull of points from $\eta+\delta_0$ and the cardinality of
the number of vertices in this convex hull which are distinct from the
origin. In comparison with the oracle estimator for the volume of $K$
suggested in \cite{BaldinReiss16} (see also
Example~\ref{ex:chull-oracle}), the first term of $\hat{F}$ may be
larger, while the second term may be smaller, since the origin is
excluded from the generator.  Assume now that $\beta>d/2$ and
$w\in L^2(\nu)$, so that $f\in L^2(\lambda)$.  The variance of
$\hat{F}$ is then given by $\BE V_d(K\setminus \conv(L\cup Z_\mu))$.
It is smaller than the variance of the estimator in
\cite{BaldinReiss16}, since we utilise extra information that $K$
contains the set $L$.

If $L$ contains the origin in its interior, then the function
$f(s,u)=u^{\beta-d}$ belongs to $L^2(\lambda)$ for all
$\beta\in\R$. For instance, if $\beta=-1$, we obtain an estimator for
\begin{displaymath}
  F=\int_{\BS} \big(\,\rho_L(s)^{-1}-\rho_K(s)^{-1}\big)\,\nu(ds),
\end{displaymath}
which is proportional to the difference between the mean widths of
polar bodies to $L$ and $K$, see \cite[Page~616]{sch:weil08}.

\subsubsection{Approximation by intersections of half-spaces}

Consider now a dual approximation of a convex body from the outside.
In this case we are in the setting of infimum of functions instead of
supremum, see Remark~\ref{rem:minimum}.  Fix a convex closed set $L$
which contains the origin in its interior and can be unbounded, and let
$\varpi:=\rho_L$ be the upper boundary function.  Let
\begin{displaymath}
  g_{s,u}(r):=\min \big(\rho_L(r), u\langle s,r\rangle_+^{-1}\big),
  \quad s\in\BS,\; u\ge0,
\end{displaymath}
so that $g_{s,u}$ is the radial function of the intersection of $L$
and the half-space $H^-(s,u):=\{x\in\R^d: \langle x,s\rangle\leq u\}$.
Due to the change of order, the functions $g_{s,u}$ are allowed to
take infinite values. If $u<0$, let $g_{s,u}:=0$.  We leave to the
reader to check the adapted conditions (F1) and (F2) and to figure out
the meaning of the adjusted assumption (F3$^\prime$).  For instance,
if $L$ is compact, then a measure $\mu$ on $\BX$ is locally finite if
$\mu(\{(s,u): u\ge 0, H^-(s,u)\cap L\ne\emptyset\})<\infty$.  Note
that $\varpi_*(s)=\sup\{u\in\R:\varpi\ne g_{s,u}\}$ is the smallest
value of $u$ such that $L\subset H^-(s,u)$, so that
$\varpi_*(s)=h_L(s)$, which is the support function of $L$.

Consider the measure
$\bar\lambda(d(s,u)):=\I\{u\ge 0\}\nu(ds) du$ on
$\BS\times\R$, where $\nu$ is the $(d-1)$-dimensional
Hausdorff measure $\nu$ on $\BS$. This measure defines a stationary
(and isotropic) Poisson process on the affine Grassmannian $A(d,d-1)$, see
Example~\ref{ex:polytope-stopping}.

The aim is to recover information about an unknown convex body $K$
which contains the origin in its interior and such that $K\subset L$. Let
$\phi:=\rho_K$.  Then $\phi\leq g_{s,u}$ if and only if $K\subset
H^-(s,u)$, that is, $u\geq h_K(s)$. Thus, $\phi^+=h_K$.
Let $\lambda$ be the restriction of $\bar\lambda$ to
\begin{align*}
  \BH:=\big\{(s,u)\in\BS\times[0,\infty): h_K(s)\leq u\leq h_L(s)\big\},
\end{align*}
so that $\lambda$ determines a Poisson process on $(d-1)$-dimensional
affine hyperplanes which do not intersect $K$ and intersect $L$,
equivalently, on the family of all half-spaces which contain $K$ and
do not contain $L$. The intersection of all such half-spaces is a random set
$P_\eta$ called the Poisson polytope, which almost surely contains
$K$, see \cite{MR4134241}.
Since $\partial\eta$ is a.s.\ finite (see
Example~\ref{ex:polytope-stopping}), condition (F4) is satisfied.
The radial function of $P_\eta$ equals $\inf\eta$ and
$(\inf\eta)^+=h_{P_\eta}$.

Let $f(s,u):=w(s) \beta u^{\beta-1}$, $(s,u)\in\BS\times\R$,
with $\beta<0$ and a function $w\in L^1(\nu)$. Then an analogue of the
functional \eqref{eq:20} becomes
\begin{equation}
  \label{eq:21a}
  F=\int_{\BS}\int_{h_K(s)}^{h_L(s)}f(s,u) \; \nu(ds) du
  =\int_{\BS} w(s) \big(h_K(s)^\beta-h_L(s)^\beta\big)\,\nu(ds).
\end{equation}
If $\beta=-d$, $w(s)\equiv1$, and $L=\R^d$, then $F$ is the integral
of $h_K^{-d}$, which is proportional to the volume of the polar body
to $K$.  If $L$ is bounded, it is possible to consider any
$\beta\neq0$. For instance, if $\beta=1$ and $w(s)\equiv1$, then
\begin{displaymath}
  F=\int_{\BS} 
  \big(h_L(s)-h_K(s)\big)\,\nu(ds),
\end{displaymath}
which is the difference between the mean widths of $L$ and $K$; if
also $d=2$, then $F$ is the difference between the perimeters of $L$
and $K$.  The Poisson hull estimator of $F$ becomes
\begin{displaymath}
  \hat{F}=\int_{\BS} 
  \big(h_L(s)-h_{P_\eta}(s)\big)\, ds
  -\card(\partial\eta).
\end{displaymath}

\bigskip

\section*{Acknowledgments}

The authors are grateful to two anonymous referees for several
  corrections and encouraging suggestions to the first version of
  this work and to Andrei Ilienko for several critical remarks.
  IM is grateful to Mathematics Department of the Karlsruhe Institute
  of Technology for hospitality.  

The second author was supported by Swiss National Science Foundation grant
200021\_175584 and the Alexander von Humboldt Foundation.


\newpage

\section*{Supplementary material for the paper: Poisson hulls}

\addtocounter{section}{1}
\setcounter{equation}{0}
\setcounter{proposition}{0}
\renewcommand{\theequation}{\arabic{equation}}
\renewcommand{\theproposition}{\arabic{proposition}}



  





\subsection*{Proof of Proposition~3.3}

We need to show that
\begin{align}\label{e3.5}
  \BE h(\partial\eta,\eta_{[\eta]})
  =\BE\int h(\partial\eta,\mu_{[\eta]})\,\Pi_\lambda(d\mu)
\end{align}
for all bounded and measurable $h\colon\bN^2\to\R$.  For
$B\in\mathcal{X}$, let $\mathcal{N}_B\subset\mathcal{N}$ be the
$\sigma$-field generated by the mapping $\mu\mapsto\mu_B$.  The
monotone class theorem easily implies that
$\cup_{m\in\N}\mathcal{N}_{B_m}$ contains an intersection stable
family generating $\mathcal{N}$. Therefore, we can assume that there
exists an $m\in\N$ such that $h(\mu,\psi)=h(\mu_B,\psi_B)$ for all
$\mu,\psi\in\bN$, where $B=B_m$. Let $\eta_n:=\eta_{B_n}$ be the
restriction of $\eta$ to $B_n$. For each $n\in\N$, define the event
\begin{align*}
  D_n:=\big\{(\partial \eta_n)_B=(\partial \eta)_B,\; 
  \eta_{[\eta_n]\cap B}=\eta_{[\eta]\cap B}\big\}. 
\end{align*}
Then 
\begin{equation}
  \label{eq:19a}
  \BE h(\partial\eta,\eta_{[\eta]})
  =\BE\big[\I_{\Omega\setminus D_n}
  \big(h(\partial\eta,\eta_{[\eta]})
  -h(\partial\eta_n,(\eta_n)_{[\eta_n]})\big)\big]
  +\BE\big[h(\partial\eta_n,(\eta_n)_{[\eta_n]})\big].
\end{equation}
For each $n\in\N$ and each $\mu\in\bN$, define the event
\begin{displaymath}
  E_n(\mu):=\{\mu_{[\eta]\cap B}= \mu_{[\eta_n]\cap B}\}.
\end{displaymath}
Then 
\begin{align}
  \label{eq:19b} \notag
  \BE\int h(\partial\eta,\mu_{[\eta]})\,\Pi_\lambda(d\mu)
  &=\BE\bigg[\int \I_{\Omega\setminus (D_n\cup E_n(\mu))}
    \big(h(\partial\eta,\mu_{[\eta]}) 
    -h(\partial\eta_n,\mu_{[\eta_n]})\big)\,\Pi_\lambda(d\mu)\bigg]\\
    &\qquad\qquad +\BE\int h(\partial\eta_n,\mu_{[\eta_n]})\,\Pi_\lambda(d\mu).
\end{align}
The first terms on the right-hand sides of \eqref{eq:19a} and
\eqref{eq:19b} converge to zero as $n\to\infty$ by conditions (3.3)
and (3.4) imposed in Proposition 3.3 from \cite{las:mol:23}.  The
second terms coincide by Theorem~3.2 from \cite{las:mol:23}, since
\eqref{e3.5} holds with $\eta_n$ in place of $\eta$.

\subsection*{Proof of Theorem~6.1} 

By the conditional variance formula,
\begin{align}\label{e5.5}
  \BV \hat{F}^{(k)}= \BV\int f\,d\eta^{(k)} 
  -\BE \BV\left[ \int f\,d\eta^{(k)}\Bigm\vert \partial\eta\right].
\end{align}
Using conditional covariances we can write
\begin{align}
  \label{e5.92}\notag
  X:=\BV&\left[ \int f\,d\eta^{(k)}\Bigm\vert \partial\eta\right]
          =\BV\left[\sum^k_{i=0}\binom{k}{i}\iint f(\bx,\bv)\,(\partial\eta)^{(k-i)}(d\bv)\,
          (\eta_{[\eta]})^{(i)}(d\bx)\Bigm\vert \partial\eta\right]\\
        &=\sum^k_{i,j=1}\binom{k}{i}\binom{k}{j}
          \BC\left[\int f_i(\bx)\,(\eta_{[\eta]})^{(i)}(d\bx),
          \int f_j(\bx)\,(\eta_{[\eta]})^{(j)}(d\bx)\Bigm\vert \partial\eta\right],
\end{align}
where
\begin{align*}
  f_i(\bx):=\int f(\bx,\bv)\,(\partial\eta)^{(k-i)}(d\bv),\quad \bx\in\BX^i,\,
  i\in\{1,\ldots,k\}.
\end{align*}

To proceed, we need a formula for covariances of Poisson U-statistics.  Let
$i,j\in\N$, and let $g\colon\BX^i\to\R$ and $h\colon\BX^j\to\R$ be
measurable and symmetric.  Under suitable integrability assumptions,
it follows from Proposition~12.11 and Corollary~12.8 in
\cite{LastPenrose17} that
\begin{align}
  \label{e5.78}\notag
  \BC&\left[\int g\,d\eta^{(i)},\int h\,d\eta^{(j)}\right]\\
     &=\sum^{i\wedge j}_{n=1}\binom{i}{n}\binom{j}{n}n!
       \iiint g(\bx,\by)h(\bx,\bz)\,\lambda^{i-n}(d\by)\,
       \lambda^{j-n}(d\bz)\,\lambda^{n}(d\bx).
\end{align}
This formula generalises \cite[Eq.~(6.4)]{las:mol:23}.
Using \cite[Theorem~3.2]{las:mol:23} 
and \eqref{e5.78} in \eqref{e5.92}, we obtain
that
\begin{align*}
  X=&\sum^k_{i,j=1}\binom{k}{i}\binom{k}{j}
      \sum^{i\wedge j}_{n=1}\binom{i}{n}\binom{j}{n}n!
      \idotsint f(\bx,\by,\bv)f(\bx,\bz,\bw)\\ 
    &\times\bH_{\bx}(\eta)\bH_{\by}(\eta)\bH_{\bz}(\eta)
      \,\lambda^{i-n}(d\by)\,\lambda^{j-n}(d\bz)\,\lambda^{n}(d\bx)
      \,(\partial\eta)^{(k-i)}(d\bv)\,(\partial\eta)^{(k-j)}(d\bw),
\end{align*}
where
\begin{align*}
  \bH_{\bx}(\eta):=\prod^n_{l=1}\bH_{x_l}(\eta),\quad
  \bx=(x_1,\ldots,x_n)\in\BX^n.
\end{align*}

Next, we need a property of factorial measures. For $r,s\in\N$ and
$l\in\{0,\ldots,r\wedge s\}$, let $A_{r,s,l}$ be the set of all
$(\bv,\bw)\in\BX^r\times\BX^s$ such that the total variation
distance between $\delta_{\bv}$ and $\delta_{\bw}$ equals
$r+s-2l$. The relation $(\bv,\bw)\in A_{r,s,l}$ means that $\bv$ and
$\bw$, when interpreted as multisets, coincide in exactly $l$
points.  Let $g\colon\BX^r\to\R$ and $h\colon\BX^s\to\R$ be
measurable and symmetric.  Then
\begin{multline}
  \label{e5.8}
  \iint \I\{(\bv,\bw)\in A_{r,s,l}\}g(\bv)h(\bw)\,\mu^{(r)}(d\bv)\,\mu^{(s)}(d\bw)\\
  =l!\binom{r}{l}\binom{s}{l}\int \I\{\bu\in \BX^l\}g(\bu,\bv)h(\bu,\bw)
  \,\mu^{(r+s-l)}(d(\bu,\bv,\bw)),
  \quad \mu\in\bN.
\end{multline}
If $\mu$ is a finite sum of Dirac measures, this is a purely
combinatorial fact, whose proof is left to the reader. The general
case follows from \cite[Lemma~A.15]{LastPenrose17}.  Since
$\BX^r\times\BX^s$ is the disjoint union of the sets $A_{r,s,l}$, we
obtain from \eqref{e5.92} and \eqref{e5.8} that $X$ equals
\begin{multline*}
  \sum^k_{n=1}\sum^k_{i,j=1}\sum^{k}_{l=0}
    \binom{k}{i}\binom{k}{j}\binom{i}{n}\binom{j}{n}n!
    l!\binom{k-i}{l}\binom{k-j}{l}
    \idotsint f(\bx,\by,\bu,\bv)f(\bx,\bz,\bu,\bw)\\
  \times\I\{\bu\in\BX^l\}\bH_{\bx}(\eta)\bH_{\by}(\eta)\bH_{\bz}(\eta)
    \,\lambda^{i-n}(d\by)\,\lambda^{j-n}(d\bz)\,\lambda^{n}(d\bx)
    \,(\partial\eta)^{(2k-i-j-l)}(d(\bu,\bv,\bw)).
\end{multline*}
In the above sum we have $l\le k-i\le k-n$.  Substituting
$m=l+n(\le k)$ in the inner sum and swapping the order of summation
yield that $X$ equals
\begin{multline*}
  \sum^k_{m=1}\sum^m_{n=1}\sum^k_{i,j=1}\binom{k}{m}^2m!
    \binom{m}{n}\binom{k-m}{i-m}\binom{k-m}{j-m}
    \idotsint f(\bx,\bu,\by,\bv)f(\bx,\bu,\bz,\bw)\\
  \times\I\{\bu\in\BX^{m-n}\}\bH_{\bx}(\eta)\bH_{\by}(\eta)\bH_{\bz}(\eta)
    \,\lambda^{i-n}(d\by)\,\lambda^{j-n}(d\bz)\,\lambda^{n}(d\bx)
    \,(\partial\eta)^{(2k-i-j-m+n)}(d(\bu,\bv,\bw)),
\end{multline*}
where we have used that
\begin{align*}
  \binom{k}{i}\binom{k}{j}\binom{i}{n}\binom{j}{n}n!
  (m-n)!\binom{k-i}{m-n}\binom{k-j}{m-n}
  =\binom{k}{m}^2m!\binom{m}{n}\binom{k-m}{i-n}\binom{k-m}{j-n}.
\end{align*}
Taking expectations and using the multivariate Mecke equation (and
changing the summation indices $(i,j)$ to $(i-n,j-n)$) yield that
\begin{align*}
  \BE X=
  &\sum^k_{m=1}\sum^m_{n=1}\sum^{k-m}_{i,j=0}\binom{k}{m}^2m!
    \binom{m}{n}\binom{k-m}{i}\binom{k-m}{j}
    \idotsint f(\bx,\bu,\by,\bv)f(\bx,\bu,\bz,\bw)\\
  &\qquad\qquad\qquad\times\BE\Big[\bH_{\bx}(\eta_{\bu,\bv,\bw})H_{\bu}(\eta_{\bu,\bv,\bw})
    \bH_{\by}(\eta_{\bu,\bv,\bw})
    H_{\bv}(\eta_{\bu,\bv,\bw})\bH_{\bz}(\eta_{\bu,\bv,\bw})H_{\bw}(\eta_{\bu,\bv,\bw})\Big]\\
  &\qquad\qquad\qquad \times \lambda^i(d\by)\,\lambda^{k-m-i}(d\bv)\,
    \lambda^j(d\bz)\,\lambda^{k-m-j}(d\bw)\,
    \lambda^{n}(d\bx)\,\lambda^{m-n}(d\bu),
\end{align*}
where
$\eta_{\bu,\bv,\bw}:=\eta+\delta_{\bu}+\delta_{\bv}+\delta_{\bw}$.  We
assert that
\begin{align}
  \label{eLemma}
  \bH_{\bx}(\eta_{\bu,\bv,\bw})
  &H_{\bu}(\eta_{\bu,\bv,\bw})\bH_{\by}(\eta_{\bu,\bv,\bw})
    H_{\bv}(\eta_{\bu,\bv,\bw})\bH_{\bz}(\eta_{\bu,\bv,\bw})H_{\bw}(\eta_{\bu,\bv,\bw})\notag\\
  &=\bH_{\bx}(\eta_{\bx,\bu,\by,\bv,\bz,\bw})H_{\bu}(\eta_{\bx,\bu,\by,\bv,\bz,\bw})
    \bH_{\by}(\eta_{\bx,\bu,\by,\bv,\bz,\bw})
    H_{\bv}(\eta_{\bx,\bu,\by,\bv,\bz,\bw})\notag \\
  &\qquad\qquad\qquad\qquad\qquad\qquad\qquad
    \times \bH_{\bz}(\eta_{\bx,\bu,\by,\bv,\bz,\bw})
    H_{\bw}(\eta_{\bx,\bu,\by,\bv,\bz,\bw}).
\end{align}
To see this we apply Lemma~\ref{l974} (to be proved below) with
$\mu:=\eta_{\bu,\bv,\bw}$.  Then
\begin{displaymath}
  \bH_{\bx}(\mu)\bH_{\by}(\mu)\bH_{\bz}(\mu)=
  \bH_{\bx}(\mu+\delta_{\bx}+\delta_{\by}+\delta_{\by})
  \bH_{\by}(\mu+\delta_{\bx}+\delta_{\by}+\delta_{\bz})
  \bH_{\bz}(\mu+\delta_{\bx}+\delta_{\by}+\delta_{\bz}).
\end{displaymath}
Moreover, by Lemma~\ref{l974} we also have
$\partial\mu=\partial(\mu+\delta_{\bx}+\delta_{\by}+\delta_{\bz})$,
so that by (H4)
\begin{align*}
  \partial(\mu+\delta_{u_i})
  =\partial(\mu+\delta_{\bx}+\delta_{\by}+\delta_{\bz}+\delta_{u_i})
\end{align*}
for each component $u_i$ of $\bu$. Therefore, we obtain from
\cite[Lemma~2.4]{las:mol:23} 
that
$H_{\bu}(\mu)=H_{\bu}(\mu+\delta_{\bx}+\delta_{\by}+\delta_{\bz})$ and
\eqref{eLemma} follows.

We now rename the variables as $\bx:=(\bx,\bu)$, $\by:=(\by,\bv)$, and
$\bz:=(\bz,\bw)$. The symmetry property of product measures yields
that
\begin{align}\label{e5.43}
  \BE X= \sum^k_{m=1}\binom{k}{m}^2m!\iiint
  f(\bx,\by)f(\bx,\bz) \BE S(\bx,\by,\bz)\,
  \lambda^{k-m}(d\by)\,\lambda^{k-m}(d\bz)\,\lambda^m(d\bx),
\end{align}
where 
\begin{align*}
  S(\bx,\by,\bz):=\sum^m_{n=1}
  \sum_{\substack{I\subset[m]\\|I|=n}}
  \prod_{l\in I}\bH_{x_l}\prod_{l\notin I}H_{x_l}
  \sum^{k-m}_{i=0}\sum_{\substack{J\subset[k-m]\\|J|=i}}
  \prod_{l\in J}\bH_{y_l}\prod_{l\notin J}H_{y_l}
  \sum^{k-m}_{j=0}\sum_{\substack{K\subset[k-m]\\|K|=j}}
  \prod_{l\in K}\bH_{z_l}\prod_{l\notin K}H_{z_l},
\end{align*}
and the argument $\eta_{\bx,\by,\bz}$ is dropped for notational
convenience.  For any numbers $a_1,\ldots,a_n\in[0,1]$ we have
(letting $\bar{a}_i:=1-a_i$)
\begin{align*}
  1=\prod^n_{i=1}(a_i+\bar{a}_i)
  =\sum_{J\subset[n]}\prod_{l\in J}a_l\prod_{l\notin J}\bar{a}_l. 
\end{align*}
Hence, the above inner sum over $(j,K)$ is one, and so
is the sum over $(i,J)$.  The remaining sum gives
\begin{align*}
  1-\prod^m_{l=1}H_{x_l}(\eta_{\bx,\by,\bz}).
\end{align*}
We can now insert this into \eqref{e5.43} and then in turn into
\eqref{e5.5}.  Taking into account the variance formula
\cite[Eq.~(6.4)]{las:mol:23},
we obtain \cite[Eq.~(6.5)]{las:mol:23}.

\begin{lemma}
  \label{l974}
  Let $\mu\in\bN$ and $\bx=(x_1,\ldots,x_n)\in\BX^n$. Then
  $\bH_{\bx}(\mu+\delta_{\bx})=1$ if and only if $\bH_{\bx}(\mu)=1$.
  In  this case $\partial(\mu+\delta_{\bx})=\partial\mu$.
\end{lemma}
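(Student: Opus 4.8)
The plan is to deduce both halves of the lemma from the single identity $\partial(\mu+\delta_{\bx})=\partial\mu$, where as usual $\delta_{\bx}=\delta_{x_1}+\cdots+\delta_{x_n}$. Recall the dictionary $\bH_{x_j}(\nu)=1\iff x_j\in[\nu]\iff\partial(\nu+\delta_{x_j})=\partial\nu$, so that $\bH_{\bx}(\nu)=1$ is exactly the statement $x_1,\dots,x_n\in[\nu]$. If I manage to show $\partial(\mu+\delta_{\bx})=\partial\mu$ under either hypothesis, then \eqref{elZr} gives $[\mu+\delta_{\bx}]=[\partial(\mu+\delta_{\bx})]=[\partial\mu]=[\mu]$, so that membership of each $x_j$ in $[\mu]$ and in $[\mu+\delta_{\bx}]$ is one and the same; this settles the equivalence $\bH_{\bx}(\mu+\delta_{\bx})=1\iff\bH_{\bx}(\mu)=1$ and also the final sentence. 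So the whole argument is really about establishing $\partial(\mu+\delta_{\bx})=\partial\mu$ from each side.

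First I would treat the direction $\bH_{\bx}(\mu)=1\Rightarrow\partial(\mu+\delta_{\bx})=\partial\mu$ by induction on $n$, the case $n=1$ being just the definition of $[\mu]$. For the step, put $\nu:=\mu+\delta_{x_1}+\cdots+\delta_{x_{n-1}}$. Since $\bH_{x_i}(\mu)=1$ for $i\le n-1$, the inductive hypothesis gives $\partial\nu=\partial\mu$. Now $\mu\le\nu$ and $\partial\mu=\partial\nu$, so condition (H4) yields $\partial(\nu+\psi)=\partial(\mu+\psi)$ for every $\psi\in\bN$; choosing $\psi=\delta_{x_n}$ and using $\partial(\mu+\delta_{x_n})=\partial\mu$ (another factor of $\bH_{\bx}(\mu)=1$) gives $\partial(\mu+\delta_{\bx})=\partial(\nu+\delta_{x_n})=\partial\mu$, as desired.

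The converse direction $\bH_{\bx}(\mu+\delta_{\bx})=1\Rightarrow\partial(\mu+\delta_{\bx})=\partial\mu$ is where I expect the real work to lie, because the hull operator is monotone only upward ($[\mu']\supset[\mu]$ when $\mu'\ge\mu$), so one cannot simply restrict the hypothesis back to $\mu$. Write $\nu:=\mu+\delta_{\bx}$; the hypothesis says $x_1,\dots,x_n\in[\nu]$, so the $x_j$ avoid $\supp\partial\nu$ since $\partial\nu$ is supported on $[\nu]^c$ by \eqref{eq:5}. Consequently, in $\partial\nu=\nu_{[\nu]^c}$ the atoms of $\delta_{\bx}$ drop out and $\partial\nu=\mu_{[\nu]^c}\le\mu$. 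Writing $\mu=\partial\nu+\rho$ with $\rho:=\mu-\partial\nu\ge0$, we have $\rho\le\nu-\partial\nu$ because $\mu\le\nu$, so the idempotency axiom (H3), applied to $\nu$, gives $\partial\mu=\partial(\partial\nu+\rho)=\partial\nu=\partial(\mu+\delta_{\bx})$. The key idea is that the freshly added points already lie inside the hull, hence contribute nothing to the generator, which is precisely what lets (H3) collapse $\mu$ and $\mu+\delta_{\bx}$ to the same generator; the transport back to $\mu$ is then handled by \eqref{elZr} as in the first paragraph.
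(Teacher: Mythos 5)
Your proposal is correct, and the converse direction takes a genuinely different route from the paper. Your forward direction (showing $\bH_{\bx}(\mu)=1\Rightarrow\partial(\mu+\delta_{\bx})=\partial\mu$ by induction and (H4)) is essentially the same as the paper's argument. For the converse, however, the paper first passes from $\bH_{\bx}(\mu+\delta_{\bx})=1$ (via Lemma~\ref{lemma:minus-point}) to the system of identities $\partial(\mu+\delta_{\bx})=\partial(\mu+\delta_{\bx}-\delta_{x_i})$, and then runs a backwards induction on the size of $I\subset\{1,\dots,n\}$, using the implication (i)$\Rightarrow$(iii) of Lemma~\ref{lemma:T}, eventually arriving at $|I|=1$, which gives $\bH_{\bx}(\mu)=1$. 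Your argument instead sets $\nu=\mu+\delta_{\bx}$, observes via \eqref{eq:5} that $\partial\nu=\nu_{[\nu]^c}$ and hence (because each $x_j\in[\nu]$) that $\partial\nu=\mu_{[\nu]^c}\le\mu$, writes $\mu=\partial\nu+\rho$ with $\rho\le\nu-\partial\nu$, and invokes (H3) once to collapse $\partial\mu=\partial(\partial\nu+\rho)=\partial\nu$. This is shorter, avoids the inductive bookkeeping and the appeal to Lemma~\ref{lemma:T}, and makes transparent that the mechanism is purely the idempotency axiom together with the structural identity \eqref{eq:5}. Your final reduction of the whole equivalence (and the ``in this case'' clause) to the single identity $\partial(\mu+\delta_{\bx})=\partial\mu$ via \eqref{elZr} is also a cleaner way to organize the statement than the paper's two-pass argument, which establishes the ``in this case'' clause inside the forward direction and then separately proves the reverse implication.
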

\begin{proof}
  Assume first that $\prod^n_{i=1}\bH_{x_i}(\mu)=1$. It follows by
  (H4) and induction that
  \begin{align*}
    \partial\Big(\mu+\sum_{i\in I}\delta_{x_i}\Big)=\partial\mu
  \end{align*}
  for all non-empty $I\subset\{1,\ldots,n\}$. In particular,
  $\partial(\mu+\delta_{\bx})=\partial\mu$ and
  \begin{align}\label{e:345}
    \partial(\mu+\delta_{\bx})=\partial(\mu +\delta_{\bx}-\delta_{x_i}),\quad i=1,\ldots,n.
  \end{align}
  Equation \eqref{e:345} and \cite[Lemma~2.4]{las:mol:23} 
  show that $\bH_{x_i}(\mu+\delta_{\bx})=1$ for each
  $i\in\{1,\ldots,n\}$.

  Assume, conversely that \eqref{e:345} holds.  We assert that then
  \begin{align*}
    \partial\Big(\mu+\sum_{i\in I}\delta_{x_i}\Big)
    =\partial\Big(\mu+\sum_{i\in I}\delta_{x_i}-\delta_{x_j}\Big),
    \quad i=1,\ldots,n.
  \end{align*}
  For each non-empty $I\subset\{1,\ldots,n\}$ and each $j\in
  I$. This follows by backwards induction on the cardinality of $I$
  using the implication (i)$\rightarrow$(iii) from
  \cite[Lemma~2.8]{las:mol:23}.
  For $|I|=1$ we obtain
  $\prod^n_{i=1}\bH_{x_i}(\mu)=1$.
\end{proof}

\subsection*{Proof of Theorem~7.1 and Corollary~7.3}

We apply the results and use
the notation of \cite[Corollary~2.2]{las:mol:sch20} which provides the normal
approximation of the Kabanov--Skorohod integral of $G$ as
\begin{displaymath}
  d_W(\deltaKS(G),N)\le T_1+T_3+T_4+T_5,
\end{displaymath}
where the summands on the right-hand side are defined in
\cite{las:mol:sch20} and will be specified later on in the course of
their calculations. The normalised deviation $(\hat{F}_t-tF)/\sigma_t$
is the Kabanov--Skorohod integral of the functional
$G(x,\eta_t):=f(x)H_x(\eta)/\sigma_t$ with respect to the Poisson
process of intensity $t\lambda$. This results in normalising the
terms from \cite[Corollary~2.2]{las:mol:sch20} by appropriate powers
of $t$ and $\sigma_t$.  Our integrability conditions
correspond to those imposed in
\cite[Equations~(2.2)--(2.5)]{las:mol:sch20}.

We write shortly $H_x$ for $H_x(\eta_t)$, $H_x(y)$ for
$H_x(\eta_t+\delta_y)$, and $H_x(y,z)$ instead of
$H_x(\eta_t+\delta_y+\delta_z)$, possibly with other subscripts and
arguments and similarly for $\bH$. Since $H_x$ takes values $0$ or
$1$, \cite[Eq.~(2.10)]{las:mol:23}
yields that
\begin{align*}
  T_1^2&:=\frac{t^3}{\sigma_t^4}
         \int \BE \Big(\int f(x)^2(D_zH_x^2)
         \lambda(dx)\Big)^2 \lambda(dz)\\
       &=\frac{t^3}{\sigma_t^4}
         \int \BE \Big(\int f(x)^2 H_x \bH_x(z)
         \lambda(dx)\Big)^2 \lambda(dz)\\
    &=\frac{t^3}{\sigma_t^4}
      \int f(x)^2f(y)^2 \BE \big[H_x\bH_x(z)
      H_y\bH_y(z)\big]\;\lambda^3(d(x,y,z)).
\end{align*}
Furthermore,
\begin{displaymath}
  T_3:=\frac{t}{\sigma_t^3}\int |f(x)|^3\BE H_x\;\lambda(dx).
\end{displaymath}
The next term is given by 
\begin{align*}
  T_4&:=\frac{t^2}{\sigma_t^3}
  \BE\int \Big(2f(x)^2|f(y)\,|H_x^2|D_x H_y|+|f(x)|f(y)^2\,
  \big|H_xD_xH_y\big|\Big(2|H_y|+|D_xH_y|\Big)\Big)\;\lambda^2(d(x,y))\\
  &\leq \frac{t^2}{\sigma_t^3}
  \int \Big(2f(x)^2|f(y)|+3|f(x)|f(y)^2)\Big)
  \BE \big[H_xH_y\bH_y(x)\big]\;\lambda^2(d(x,y)).
\end{align*}
It remains to notice that
\begin{equation}
  \label{eq:23}
  H_xH_y\bH_y(x)=H_y\bH_y(x),
\end{equation}
since
\begin{align*}
  H_y\bH_y(x)-H_xH_y\bH_y(x)
  =\bH_xH_y\bH_y(x)
  =\bH_xH_y\bH_y(x)\bH_x(y)
  =\bH_xH_y\bH_x\bH_y=0,
\end{align*}
where we used (H4) and Equation (2.13) from \cite{las:mol:23}.

The final term can be written as
\begin{displaymath}
  T_5:=2\frac{t^3}{\sigma_t^3}
  \int \big|f(x)f(y)f(z)\big|\BE\big[A_t(x,y,z)B_t(x,y,z)\big]\;\lambda^3(d(x,y,z)),
\end{displaymath}
where 
\begin{align*}
  A_t(x,y,z):\!&=|D_yH_z|+|D^2_{x,y}H_z|,\\
  B_t(x,y,z):\!&=\big|D_z(H_xD_xH_y)\big|+2|H_xD_xH_y|\\
  &=\Big|D_zH_xD_xH_y+H_x(z)D^2_{x,z}H_y\Big|
              +2|H_xD_xH_y|.
\end{align*}
The last equality follows from the product rule for the difference
operator, see \cite{Last16}.
By \cite[Eq.~(2.11)]{las:mol:23}
with $m=2$ and splitting the cases of $\bH_z(y)$
being zero or one, we obtain
\begin{align*}
  D^2_{x,y}H_z&=H_z\big(\bH_z(x)+\bH_z(y)-\bH_z(x,y)\big)\\
  &=\bH_z(y) H_z\big(\bH_z(x)+\bH_z(y)-\bH_z(x,y)\big)
  +H_z(y) H_z\big(\bH_z(x)+\bH_z(y)-\bH_z(x,y)\big)\\
  &=\bH_z(y) H_z \bH_z(x)
  +H_z(y) H_z\big(\bH_z(x)-\bH_z(x,y)\big).
\end{align*}
By (H4), $\bH_z(x)-\bH_z(x,y)=-H_z(x) \bH_z(x,y)$, so that
\begin{equation}
  \label{eq:d2}
  D^2_{x,y}H_z=H_z\Big(\bH_z(x)\bH_z(y)
  -H_z(x)\bH_z(x,y)H_z(y)\Big).
\end{equation}
Hence
\begin{align*}
  A_t(x,y,z)
  &=H_z\Big(\bH_z(y)
    +\big|\bH_z(x)\bH_z(y)
    -H_z(x)H_z(y)
    \bH_z(x,y)\big|\Big)\\
  &=:H_z\big(A'+|A''-A'''|\big).
\end{align*}
Using \eqref{eq:d2} (with suitably amended subscripts and arguments of
$\bH$) and the fact that $H_x(z)=1$ implies $H_x=1$, we obtain
\begin{align*}
  B_t(x,y,z)
  &=\Big|H_x\bH_x(z)H_y\bH_y(x)
    +H_xH_x(z)D^2_{x,z}H_y\Big|
    +2H_xH_y\bH_y(x)\\
  &=H_xH_y\Big(\,\Big|\bH_x(z)\bH_y(x)
    +H_x(z)\bH_y(z)\bH_y(x)\\
  &\qquad\qquad\qquad\qquad -H_x(z)H_y(x)H_y(z)
    \bH_y(x,z)\Big|
    +2\bH_y(x)\Big).
\end{align*}
By considering each summand in the expression of $B_t(x,y,z)$
separately, it is easy to see that
\begin{displaymath}
  H_zA'''B_t(x,y,z)=0.
\end{displaymath}
For this, we use \cite[Eq.~(2.13)]{las:mol:23}
several times together with
\begin{align*}
  \bH_z(y)\bH_y(x,z)
  =\bH_z(y)\bH_z(y,x)\bH_y(x,z)
  =\bH_z(y)\bH_z(x)\bH_y(x)
\end{align*}
applied with various subscripts and arguments of $\bH$. Thus,
the term $A_t(x,y,z)$ can be replaced with $H_z\bH_z(y)\big(1+\bH_z(x)\big)$.
Furthermore,
\begin{align*}
  H_xH_yH_z\bH_z(y)
  & H_x(z)\bH_y(z)\bH_y(x)
  =H_xH_yH_z\bH_z
  H_x(z)\bH_y\bH_y(x) = 0,\\
  H_xH_yH_z\bH_z(y)
  & H_x(z)H_y(x)H_y(z)
    \bH_y(x,z)
    =H_xH_yH_z\bH_z(y)\bH_z(x,y)H_x(z)H_y(x)H_y(z)
    \bH_y(x,z)\\
  &=H_xH_yH_z\bH_z(y)\bH_z(x)H_x(z)H_y(x)H_y(z)
    \bH_y(x)=0.
\end{align*}
Hence,
\begin{multline*}
  A_t(x,y,z)B_t(x,y,z)
  =H_xH_yH_z\bH_z(y)\big(1+\bH_z(x)\big)
    \big(\bH_x(z)+2\big)\bH_y(x)\\
  =H_xH_yH_z\bH_z(y)\bH_y(x)
    \Big(2+2\bH_z(x)+\bH_x(z)
    +\bH_z(x)\bH_x(z)\Big).
\end{multline*}
The factor in parentheses is at most $4$, since
\begin{displaymath}
  H_xH_yH_z\bH_z(y)\bH_y(x)
  \bH_z(x)\bH_x(z)=H_xH_yH_z\bH_z(y)\bH_y(x)
  \bH_z\bH_x= 0.
\end{displaymath}
Thus,
\begin{displaymath}
  A_t(x,y,z)B_t(x,y,z)
  \leq 4H_xH_yH_z\bH_z(y)\bH_y(x)\bH_x(z).
\end{displaymath}
Finally, iterating the argument from \eqref{eq:23} twice, we have
\begin{displaymath}
  H_xH_yH_z\bH_z(y)\bH_y(x)
  =H_z\bH_z(y)\bH_y(x).
\end{displaymath}

\bigskip

We now turn to the proof of \cite[Corollary~7.3]{las:mol:23}.
In the prime setting, the simpler
expressions of difference operators make it possible to formulate
\cite[Eq.~(7.1)]{las:mol:23}
as
\begin{displaymath}
  \int f(y)^2 \BE H_y(\eta)\bH_y(\delta_x)\,\lambda(d(x,y))
  =\int f(y)^2 \BE H_y(\eta)h_0(y)\,\lambda(dy)<\infty
\end{displaymath}
and \cite[Eq.~(7.2)]{las:mol:23}
as
\begin{equation}
  \label{eq:int-3-bis}
  \int f(y)^2 \BE H_y(\eta)\bH_y(\delta_x)\bH_y(\delta_z)\,\lambda(d(x,y,z))
  =\int f(y)^2 \BE H_y(\eta)h_0(y)^2\,\lambda(dy)<\infty.
\end{equation}
The latter condition is imposed in \cite[Eq.~(7.6)]{las:mol:23},
while the
first one follows from it, given that $f$
is square integrable and since $h_0(y)\leq\min(1,h_0(y)^2)$.
Finally,  \cite[Eq.~(7.3)]{las:mol:23}
becomes
\begin{displaymath}
  \int f(y)^2 \BE
  H_y(\eta)\bH_y(\delta_x)\bH_y(\delta_z)\bH_y(\delta_w)
  \,\lambda(d(y,z,w))<\infty,\quad \lambda\text{-a.e.}\; x.
\end{displaymath}
This follows from \eqref{eq:int-3-bis}, since $\bH_y(\delta_x)\leq 1$.
The terms which appear in \cite[Corollary~7.3]{las:mol:23} 
are easily derived from the corresponding terms in
\cite[Theorem~7.1]{las:mol:23} 
by noticing that
\begin{displaymath}
  H_x(\eta_t)\bH_x(\eta_t+\delta_z)=H_x(\eta_t)\bH_x(\delta_z). 
\end{displaymath}

\subsection*{Proof of Lemma~8.1}

Conditions (H1)--(H4) are easy to check. We now prove that the
equality (8.2) from \cite{las:mol:23} holds.  It suffices to consider
$\mu\ne 0$. We need to check two set inclusions.  Take $x\in[\mu]$,
that is, $\partial(\mu+\delta_x)=\partial\mu$.  By (H2),
$x\notin\partial\mu$. Assume that $x$ does not belong to the
right-hand side of (8.2) from \cite{las:mol:23}, that is,
$x(r)>(\sup\mu)(r)$ for some $r\in\BS$.  By (8.1) from
\cite{las:mol:23}, this means that $x\in \partial(\mu+\delta_x)$.
Hence $x\in\partial\mu$, a contradiction.
  
Assume conversely that $x\le \sup\mu$ for some $x\in\BX$ such that
$x\notin \partial\mu$.  We need to show that
$\partial(\mu+\delta_x)=\partial\mu$.  Take $x'\in \partial\mu$. By
definition, $(\sup\mu)(r) >(\sup\mu_{-x'})(r)$ for some
$r\in\BS$. Hence, $(\sup\mu)(r)=x'(r)$. Then there exists an open
neighbourhood $U$ of $r$ such that $(\sup\mu)(s)=x'(s)$ for all
$s\in U$. Indeed, assume this was not true. In view of (F3) only a
finite number of functions might contribute to the supremum in a
neighbourhood of $r$. These functions take values at $r$, which are
strictly smaller than $x'(r)$ and continuity of these functions imply
that they are strictly smaller than $x'(s)$ for $s$ from a (possibly
different) neighbourhood of $r$ denoted by $U$.
  
Since $x\le \sup\mu$, we have $\sup(\mu+\delta_{x})(s)=x'(s)$ for all
$s\in U$.  For the sake of a contradiction, assume now that
$x'\notin \partial(\mu+\delta_{x})$. Then
\begin{align*}
  x'(s)=(\sup\mu)(s)=(\sup(\mu+\delta_x))(s)
  =(\sup(\mu_{-x'}+\delta_x))(s)=x(s)
\end{align*}
for all $s\in U$, implying that $x=x'$ on $U$. By (F2), $x=x'$.  Since
$x\notin\partial\mu$, this is a contradiction.  Hence, we have shown
that $\partial\mu\subset \partial(\mu+\delta_{x})$.
  
Now assume that $x'\in\partial(\mu+\delta_x)$ and
$x'\notin\partial\mu$. Then $\sup\mu=\sup\mu_{-x'}$ and there exists
an $r\in\BS$ such that
\begin{displaymath}
  \sup((\mu+\delta_x)_{-x'}(r)<(\sup(\mu+\delta_x))(r).
\end{displaymath}
The left-hand side equals $\max((\sup\mu_{-x'})(r),x(r))$ and the
right-hand side takes the same value, which is a contradiction.

To prove that $\partial$ is measurable, it suffices to show that
$(x,\mu)\mapsto\I\{\sup \mu\ne \sup\mu_{-x}\}$ is measurable on
$\BX\times\bN$. Let $D$ be a countable dense subset of $\BS$.  By
the continuity property of the functions in $\BX$ we have that
$\sup \mu\ne \sup\mu_{-x}$ if and only if there exists a $y\in \mu$
such that $y(r)>x(r)$ for some $r\in D$. Hence it suffices to prove
for any fixed $r\in D$ that
$(x,\mu)\mapsto \int \I\{y(r)>x(r)\}\,\mu(dy)$ is measurable.  But
this follows from the assumed measurability of
$(x,y)\mapsto(x(r),y(r))$.

\subsection*{Proof of Lemma~8.2}

Let $(K_n)_{n\in\N}$ be an increasing sequence of compact subsets of
$\BS$, eventually covering any given compact set.  Let $a_n>0$,
$n\in\N$, be a decreasing sequence such that $a_n\to0$ as
$n\to\infty$.  In view of (F3) we can choose sets $B_n$ in
Proposition~3.3 from \cite{las:mol:23} as
\begin{align*}
  B_n:=\big\{x\in \BX:\text{$x(r)\ge \varpi_{-n,a_n}(r)$
  for some $r\in K_n$}\}\cup\{\varpi \big\}
\end{align*}
if $\varpi\in\BX$, otherwise we omit it in $B_n$.  We shall show
that
\begin{align}
  \label{eq:D1a}
  \lim_{n\to\infty} \I \big\{x\in \partial \mu_{B_n}\big\}
  &=\I \big\{x\in \partial\mu \big\},\quad (\mu,x)\in\bN\times\BX,\\
  \label{eq:D3a}
  \lim_{n\to\infty} \I \big\{x\in [\psi_{B_n}]\big\}&=\I \big\{x\in [\psi]\big\},
                                                      \quad \Pi_\lambda\otimes\lambda
      \text{-a.e.\ $(\psi,x)\in\bN\times\BX$}.
\end{align}
Let $x\in\partial\mu$. Hence, there exists an $s\in\BS$ such that
$x(s)>(\sup \mu_{-x})(s)$. Choose $n$ so large that $s\in K_n$ and
$x(s)>a_n+\varpi(s)$. Then $x\in B_n$ and
$x(s)>(\sup (\mu_{B_n})_{-x})(s)$. Hence, $x\in\partial\mu_{B_n}$.
Assume, conversely, that $x\in\partial\mu_{B_n}$ for some $n\in\N$.
Then $x(s)>\sup (\mu_{B_n})_{-x}(s)$ for some $s\in K_n$. In
particular, $x(s)>a_n+\varpi(s)$.  Since, by definition of $B_n$,
$(\sup \mu_{\BX\setminus B_n})(s)\le a_n+\varpi(s)$, we obtain
$x(s)>(\sup \mu_{-x})(s)$ and hence $x\in\partial\mu$. Thus,
\eqref{eq:D1a} holds.

Next, we prove \eqref{eq:D3a}.  Since $\psi_{B_n}\leq\psi$, (H4)
yields that $[\psi_{B_n}]\subset[\psi]$.  Assume that $x\in[\psi]$ for
a pair $(\psi,x)$ which satisfies (F4).  By Lemma~8.1 from
\cite{las:mol:23}, $x\le \sup\psi$ and $x\notin\partial\psi$.  By (F4)
there exists a finite $\psi'\le \psi$ such that $x\le\sup\psi'$. Since
$B_n\uparrow \BX$, we have $x\le\sup\psi_{B_n}$ for all sufficiently
large $n$. Moreover, by \eqref{eq:D1a} we have
$x\notin\partial\psi_{B_n}$ for all sufficiently large $n$. Lemma~8.1
from \cite{las:mol:23} implies that $x\in[\psi_{B_n}]$ for all
sufficiently large $n$.

Property \eqref{eq:D3a} yields that 
\begin{displaymath}
  \lim_{n\to\infty} \I \big\{x\in \mu_{[\psi_n]}\big\}
  =\I \big\{x\in \mu_{[\psi]}\big\},
  \quad \mu\in\bN,\; \Pi_\lambda\otimes\lambda
  \text{-a.e.\ $(\psi,x)\in\bN\times\BX$}. 
\end{displaymath}
Noticing that $\mu_B$ is finite for $B=B_m$, we have
\begin{equation}
  \label{eq:11}
  \I \big\{\mu_{[\eta]\cap B}= \mu_{[\eta_{B_n}]\cap B}\big\}
  \leq \sum_{x\in\mu_B} \big|\I \big\{x\in \mu_{[\eta]}\big\}
  -\I \big\{x\in \mu_{[\eta_{B_n}]}\big\}\big|\to 0\quad \text{as}\;
  n\to\infty
\end{equation}
for all $\mu\in\bN$, so that \eqref{eH61} holds. Furthermore,
$(\partial\mu)_B$ is also finite, since it is dominated by $\mu_B$,
and so
\begin{displaymath}
  \I \big\{(\partial\mu_{B_n})_B=(\partial\mu)_B \big\}\to 1
  \quad\text{as}\; n\to\infty
\end{displaymath}
for all $\mu\in\bN$.  Together with \eqref{eq:11} applied with $\mu$
replaced by $\eta$ and using the dominated convergence theorem, we
obtain \eqref{eH6}. Therefore, conditions of Proposition~3.3 from
\cite{las:mol:23} are satisfied, and so the strong Markov property
holds.

\subsection*{Proof of Lemma~8.10} 

First, note that \cite[Eq.~(8.15)]{las:mol:23}
holds for $u=0$, since
$H_{s,\phi(s)}(\eta)=1$ a.s. Assume that $u>0$. Let
\begin{displaymath}
  A_{s,u}:=\big\{(q,w)\in\R^d\times\R:u\le w-R\|q-s\|^\beta \big\}.
\end{displaymath}
Then 
\begin{align*}
  \BE H_{s,u}(\eta)=\BP(\eta(A_{s,u})=0)
  =\exp\bigg[-\iint\I \big\{u\le w-R\|s-q\|^\beta,w\le \phi(q) \big\}
  \,dw\,dq\bigg].
\end{align*}
Changing variables yields
\begin{displaymath}
  -\log \BE H_{s,\phi(s)-u}(\eta)= 
  \iint\I \big\{\phi(s)-u\le w-R\|s-q\|^\beta,
  w\le \phi(q) \big\}\,dw\,dq. 
\end{displaymath}
Since $\phi(s)\le\phi(q)+R'\|q-s\|^\beta\le\phi(q)+R\|q-s\|^\beta$, 
\begin{align*}
  -\log \BE H_{s,\phi(s)-u}(\eta)
  &\geq \iint\I\big\{\phi(q)+2R\|s-q\|^\beta-u\le w,
    w\le \phi(q)\big\}\,dw\,dq\\
  &=\iint\I\big\{u-2R\|s-q\|^\beta\ge w \ge 0\big\}\,dw\,dq,
\end{align*}
where in the last integral $w$ has been changed to $\phi(q)-w$. Hence,
\begin{align*}
  -\log \BE H_{s,\phi(s)-u}(\eta) \geq \int \big(u-2R\|q\|^\beta\big)_+\,dq
  &=d\kappa_d\int^\infty_0 \big(u-2R r^\beta\big)_+r^{d-1}\,dr\\
  &=\frac{d\kappa_d}{\beta}\left(\frac{u}{2R}\right)^{d/\beta}u \mathrm{B}(2,d/\beta),
\end{align*}
where $\mathrm{B}$ is the Beta-function.  Hence,
\begin{displaymath}
  -\log \BE H_{s,\phi(s)-u}(\eta) \geq
  \frac{\beta\kappa_d}{d+\beta}(2R)^{-d/\beta} u^{(d+\beta)/\beta}. 
\end{displaymath}
To derive the upper bound, we use the inequality
$\phi(s)\ge\phi(q)-R'\|q-s\|^\beta$ to find that
\begin{align*}
  -\log \BE H_{s,\phi(s)-u}(\eta)\leq 
  \iint\I\big\{\phi(q)+(R-R')\|q\|^\beta-u\le w,
  w\le \phi(q)\big\}\,dw\,dq.
\end{align*}
We can then proceed as before.

For the expectation of the product which appears in
\cite[Eq.~(8.16)]{las:mol:23},
we have
\begin{multline*}
  -\log \BE \big[H_{s,\phi(s)-u}(\eta)H_{r,\phi(r)-v}(\eta)\big]
  =-\log \BP\big(\eta(A_{s,\phi(s)-u}\cup A_{r,\phi(r)-v})=0\big)\\
  =\iint\I\big\{\text{$\phi(s)-u\le w-R\|q-s\|^\beta$
    or $\phi(r)-v\le w-R\|q-r\|^\beta$}\big\}
  \I\big\{w\le \phi(z)\big\}\,dw\,dq.
\end{multline*}
Using the inequalities $\phi(s)\le\phi(q)+R\|q-s\|^\beta$ and
$\phi(r)\le\phi(q)+R\|q-r\|^\beta$, we obtain that the right-hand side
is bounded from below by
\begin{align*}
  \int\int^\infty_0\I\big\{
  &\text{$w\le u-2R\|q-s\|^\beta$
    or $w\le v-2R\|q-r\|^\beta$}\big\}\,dw\,dq\\
  &=\int \big(u-2R\|q-s\|^\beta\big)_+\vee \big(v-2R\|q-r\|^\beta\big)_+\,dq\\
  &\geq \int \big(u-2R\|q\|^\beta\big)_+\,dz\vee \int \big(v-2R\|q\|^\beta\big)_+\,dq\\
  &= \frac{\beta\kappa_d}{d+\beta}(2R)^{-d/\beta} (u\vee
    v)^{(d+\beta)/\beta}. \qedhere
\end{align*} 

\subsection*{Proof of Theorem~8.14} 
\label{sec:proof-theorem}

Denote by $c$ (possibly with indices) positive
constants whose values may change from line to line.  Recall that
$\sigma_t^2:=\BV\hat{F}_t$.  By \cite[Corollary~8.13]{las:mol:23},
$\sigma_t^2\geq c_1 t^{1-\gamma\beta/(d+\beta)}$ for all $t\geq1$.
Our aim is to apply \cite[Corollary~7.3]{las:mol:23} 
and confirm that the order of each of the involved terms is 
the same as the order of $\sigma_t^{-1}$.

The functions $h_i$, $i=0,1,2$, from \cite[Eq.~(7.5)]{las:mol:23}
can be calculated as follows
\begin{align*}
  h_i(r,v)
  &=\int \big|f(s,u)\big|^i \bH_{r,v}(\delta_{s,u}) \I\big\{u\le\phi(s)\big\}\,d(s,u)\\
  &=\int \big|f(s,u)\big|^i \I\{g_{r,v}\leq g_{s,u}\} \I\big\{u\le\phi(s)\big\}\,d(s,u)\\
  &=\int \big|f(s,\phi(s)-u)\big|^i \I\big\{v\le
    \phi(s)-u-R\|s-r\|^\beta\big\}\I\{u\geq 0\}\,d(s,u),
\end{align*}
where we changed the variable $u$ to $\phi(s)-u$ and used
\cite[Lemma~8.9]{las:mol:23}.
The H\"older property of $\phi$ yields that
\begin{align*}
  h_i(r,\phi(r)-v)
  &= \int \big|f(s,\phi(s)-u)\big|^i
  \I\big\{\phi(r)-v\leq \phi(s)-u-R\|s-r\|^\beta\big\}\I\{u\geq 0\}\,d(s,u)\\
  &\leq \int \big|f(s,\phi(s)-u)\big|^i
  \I\big\{(R-R')\|s-r\|^\beta\leq v-u\big\}\I\{0\leq u\leq v\}\,d(s,u).
\end{align*}
In particular,
\begin{displaymath}
  h_0\big(r,\phi(r)-v\big)\leq \frac{\kappa_d\beta}{d+\beta}
  v^{(d+\beta)/\beta}(R-R')^{-d/\beta}.
\end{displaymath}
By \cite[Lemma~8.10]{las:mol:23},
\begin{align*}
  \int f(s,u)^2 & \BE H_{s,u}(\eta_t)h_0(s,u)^2\I\{u\leq\phi(s)\}\,d(s,u)\\
  &=\int f\big(s,\phi(s)-u\big)^2 \BE H_{s,\phi(s)-u}(\eta_t)h_0\big(s,\phi(s)-u\big)^2 \I\{u\geq 0\} \,d(s,u)\\
  &\leq c_1 \int f\big(s,\phi(s)-u\big)^2
  \exp\Big[-c_2u^{(d+\beta)/\beta}\Big]u^{2(d+\beta)/\beta}\I\{u\geq 0\}\,d(s,u)\\
  &\leq c_1 \int_0^\infty f_2(u)
  \exp\Big[-c_2u^{(d+\beta)/\beta}\Big]u^{2(d+\beta)/\beta}\,du.
\end{align*}
The last expression is finite, since $f_2$ grows at most
polynomially. Hence, the integrability condition
\cite[Eq.~(7.6)]{las:mol:23}
is satisfied.

Now we bound the terms from \cite[Corollary~7.3]{las:mol:23}.
First,
\begin{align*}
  T_3(t)
  &=t\sigma_t^{-3}\int \big|f(s,u)\big|^3\BE H_{s,u}(\eta_t)\I\big\{u\le\phi(s)\big\}\,d(s,u)\\
  &\le t\sigma_t^{-3}\int^\infty_0 f_3(u)
    \exp\big[-c_2tu^{(d+\beta)/\beta}\big]\,du,
\end{align*}
where we changed the variable $u$ to $\phi(s)-u$ and used
\cite[Eq.~(8.15)]{las:mol:23}.
By \cite[Lemma~8.11]{las:mol:23}, 
\begin{displaymath}
  T_3(t) \leq c t\, t^{-3(1-\gamma\beta/(d+\beta))/2}
  t^{-\gamma\beta/(d+\beta)}=ct^{-1/2+\gamma\beta/2(d+\beta)}, \quad t\geq1.
\end{displaymath}
Next, 
\begin{align*}
  T_4(t)
  &=t^2\sigma_t^{-3}\int\Big(2 h_2(r,v)\big|f(r,v)\big|+3 h_1(r,v)f(r,v)^2\Big)
  \BE H_{r,v}(\eta_t)\I\big\{v\leq \phi(r)\big\}\, d(r,v)\\
  &= t^2\sigma_t^{-3}\int  \Big(2 h_2(r,\phi(r)-v)\big|f(r,\phi(r)-v)\big|+3
    h_1(r,\phi(r)-v)f(r,\phi(r)-v)^2\Big)\\
  &\qquad\qquad \qquad\qquad \qquad\qquad
    \times\BE H_{r,\phi(r)-v}(\eta_t) \I\{0\leq v\}\, d(r,v)\\ 
  &\leq c_3 t^2\sigma_t^{-3} \int
    \Big(2 h_2(r,\phi(r)-v)\big|f(r,\phi(r)-v)\big|+3
    h_1(r,\phi(r)-v)f(r,\phi(r)-v)^2\Big)\\
  &\qquad\qquad \qquad\qquad \qquad\qquad
    \times \exp\big[-c_2tv^{(d+\beta)/\beta}\big]\I\{0\leq v\}\, d(r,v).    
\end{align*}
By inserting the expression for $h_i$ and using the Lipschitz property
of $\varphi$, we obtain for $i,j=1,2$
\begin{multline*}
  A_{ij}(v):=\int h_i(r,\phi(r)-v)\big|f(r,\phi(r)-v)\big|^j\, dr\\
  \leq \iint \big|f(s,\phi(s)-u)\big|^i\big|f(r,\phi(r)-v)\big|^j
  \I\big\{(R-R')\|s-r\|^\beta\leq v-u\big\}\I\{0\leq u\leq v\}\, d(s,u)dr.
\end{multline*}
Changing variable $q=s-r$, applying the Cauchy--Schwarz inequality
to the integral over $r$, and using the definition of the functions
$f_2$ and $f_4$ yield that
\begin{displaymath}
  A_{ij}(v)\leq c \int_0^v \sqrt{f_{2i}(u)}\sqrt{f_{2j}(v)}
  (v-u)^{d/\beta}\, du.
\end{displaymath}
By \cite[Eq.~(8.19)]{las:mol:23}
and \cite[Lemma~8.11]{las:mol:23}, 
\begin{displaymath}
  A_{ij}(v)\asymp v^{d/\beta+\gamma} \quad\text{as}\; v\to 0, 
\end{displaymath}
and $A_{ij}$ grows at most polynomially. Then,
\begin{displaymath}
  T_4(t)\leq c t^2\sigma_t^{-3}
  \int_0^\infty \big(2A_{21}(v)+3A_{12}(v)\big)
  \exp\big[-c_2tv^{(d+\beta)/\beta}\big]\,dv.
\end{displaymath}
By \cite[Lemma~8.11]{las:mol:23},
the integral has the order $t$ to the power
$-(d/\beta+\gamma+1)\beta/(d+\beta)$. Taking into account the lower
bound on $\sigma_t$, we obtain that
\begin{displaymath}
  T_4(t)\leq c t^2 t^{-3(1-\gamma\beta/(d+\beta))/2}
  t^{-(d/\beta+\gamma+1)\beta/(d+\beta)}
  =ct^{-(1-\gamma\beta/(d+\beta))/2},\quad t\geq1.
\end{displaymath}

For $T_5(t)$ with $x:=(s,u)$, $y:=(r,v)$ and $z:=(q,w)$, we obtain
that
\begin{align*}
  T_5(t)
  &=8t^3\sigma_t^{-3}\iint \big|f(s,u)f(r,v) f(q,w)\big|\BE H_{q,w}(\eta_t)
    \I\big\{u\le\phi(s),v\le\phi(r),w\le\phi(q)\big\}\\
  &\qquad\qquad\times \I\big\{w\le u-R\|s-q\|^\beta,w\le v-R\|r-q\|^\beta\big\}
    \,d(u,v,w)\,d(s,r,q)\\
  &\le ct^3\sigma_t^{-3}\int_{\R_+^3}\int \big|f(s,\phi(s)-u)f(r,\phi(r)-v) f(q,\phi(q)-w)\big|
    \exp\big[-c_2t w^{(d+\beta)/\beta}\big]\\
  &\qquad\qquad\times \I\big\{c\|s-q\|^\beta\le w-u,c\|r-q\|^\beta\le w-v\big\}
    \,d(s,r,q)\,d(u,v,w),
\end{align*}
where the first equality follows from \cite[Lemma~8.9]{las:mol:23}.
Changing the variables $r$ and $s$ and using the H\"older inequality
for the integration with respect to $q$, we obtain that
\begin{multline*}
  T_5(t)\le ct^3\sigma_t^{-3}\int_{\R_+^3}\int f_3^{1/3}(u) f_3^{1/3}(v) f_3^{1/3}(w)
  \exp\big[-c_2t w^{(d+\beta)/\beta}\big]\\
  \times \I\big\{c\|s\|^\beta\le w-u,c\|r\|^\beta\le w-v\big\}\,d(s,r)\,d(u,v,w).
\end{multline*}
Therefore,
\begin{displaymath}
  T_5(t)\le c_1t^3\sigma_t^{-3}\int_0^\infty f_3^{1/3}(w)\exp\big[-c_2t
  w^{(d+\beta)/\beta}\big]  g(w)^2\,dw, 
\end{displaymath}
where 
\begin{displaymath}
  g(w):=\int_0^w f_3^{1/3}(v) (w-v)^{d/\beta}  \, dv.
\end{displaymath}
By \cite[Lemma~8.11]{las:mol:23},
$g$ grows at most polynomially and 
$g(w)\asymp w^{(\gamma+2)/3+d/\beta}$ as $w\to0$, that is,
$g(w)^2\asymp w^{2(\gamma-1)/3+2d/\beta+2}$.
By \cite[Lemma~8.11]{las:mol:23},
\begin{displaymath}
  T_5(t)\leq ct^3\sigma_t^{-3} t^{-(2d/\beta+\gamma+2)\beta/(d+\beta)}
  \leq c_1 t^\alpha, \quad t\geq1,
\end{displaymath}
where
\begin{displaymath}
  \alpha=3-\frac{3}{2}\Big(1-\frac{\gamma\beta}{d+\beta}\Big)
  -\Big(\frac{2d}{\beta}+\gamma+2\Big)\frac{\beta}{d+\beta}
  =-\frac{1}{2}+\frac{\gamma\beta}{2(d+\beta)}.
\end{displaymath}

Now consider the term $T_1(t)$ with $x:=(s,u)$ and $y:=(r,v)$:
\begin{align*}
  T_1(t)^2
  &=t^3 \sigma_t^{-4}\iint\I\big\{u\le\phi(s),v\le\phi(r),w\le\phi(q)\big\} 
    f(s,u)^2f(r,v)^2\BE H_{s,u}(\eta_t)H_{r,v}(\eta_t)\\
  &\qquad\qquad \times \I\big\{u\le w-R\|q-s\|^\beta,v\le w-R\|q-r\|^\beta\big\}
    \,d(u,v,w)\,d(s,r,q)\\
  &\le ct^3 \sigma_t^{-4}\iint f(s,\phi(s)-u)^2f(r,\phi(r)-v)^2 
    \exp\big[-c_2t(u\vee v)^{(d+\beta)/\beta}\big]\\
  &\qquad\qquad\times \I\big\{(R-R')\|q-s\|^\beta\le u-w,
    (R-R')\|q-r\|^\beta\le v-w\big\}\,d(s,r,q)\,d(u,v,w),
\end{align*}
where we used \cite[Eq.~(8.16)]{las:mol:23}.
Changing the variables $s$ and $r$
and applying the Cauchy--Schwarz inequality to the integration with
respect to $q$, we obtain that
\begin{align*}
  T_1(t)^2
  &\leq c t^3 \sigma_t^{-4}\iint \sqrt{f_4(u)}\sqrt{f_4(v)} 
  \exp\big[-c_2t(u\vee v)^{(d+\beta)/\beta}\big]\\
  &\qquad\qquad \qquad\qquad
    \times\I\big\{(R-R')\|s\|^\beta\le u-w,(R-R')\|r\|^\beta\le
    v-w\big\}\,d(s,r)\,d(u,v,w)\\
  &\le ct^3 \sigma_t^{-4}\int \I\{u\ge w\ge 0,v\ge w\}
  \sqrt{f_4(u)}\sqrt{f_4(v)} \\
  &\qquad\qquad \qquad\qquad\times\exp\big[-c_2t(u\vee v)^{(d+\beta)/\beta}\big]
  (u-w)^{d/\beta}(v-w)^{d/\beta}\,d(u,v,w).
\end{align*}
We split the integral into the part when $u\vee v\leq \eps$ and the
rest, where $\eps>0$ is chosen to ensure that $f_4(u)\leq c u^{\gamma-1}$ for
all $u\in(0,\eps)$ and use the symmetry of integrals with respect to
$u$ and $v$. Then
\begin{displaymath}
  T_1(t)\leq c\sigma_t^{-2} (T'_1(t)+T''_1(t))^{1/2},
\end{displaymath}
where 
\begin{align*}
  T'_1(t)&:=c_1 t^3 \int \I\{u\ge w\ge 0, v\ge w\}
  u^{(\gamma-1)/2}v^{(\gamma-1)/2}\\
  &\qquad\qquad \qquad\qquad \times
  \exp\big[-c_2t(u\vee v)^{(d+\beta)/\beta}\big]
  (u-w)^{d/\beta}(v-w)^{d/\beta}\,d(u,v,w),\\
  T''_1(t)&:=2c t^3 \int_\eps^\infty \int_0^u u^mv^m  
  \exp\big[-c_2t u^{(d+\beta)/\beta}\big]
  \Big(\int_0^u (u-w)^{d/\beta}(v-w)^{d/\beta}\,dw\Big)\,dv du,
\end{align*}
where $m$ is the power in the polynomial upper bound on $f_4$.  
Replacing $(u,v,w)$ by $t^{-\beta/(d+\beta)}(u,v,w)$, we see that
\begin{align*}
  T'_1(t)\le cc' t^3
  t^{-(2d/\beta+\gamma+2)\beta/(d+\beta)},
\end{align*}
where  
\begin{align*}
  c':=\int\I\{u\ge w\ge 0,v\ge w\} \exp\big[-c_2(u\vee v)^{(d+\beta)/\beta}\big]
  (u-w)^{d/\beta}(v-w)^{d/\beta}\,d(u,v,w)
\end{align*}
is easily seen to be finite. Therefore,
\begin{align*}
  \sigma_t^{-2}(T'_1(t))^{1/2} \le c t^\alpha
\end{align*}
with 
\begin{displaymath}
  \alpha=-1+\frac{\gamma\beta}{d+\beta}+\frac{3}{2}
  -\frac{1}{2}\Big(\frac{2d}{\beta}+\gamma+2\Big)\frac{\beta}{d+\beta}
  =-\frac{1}{2} +\frac{\gamma\beta}{2(d+\beta)}.
\end{displaymath}
Further,
\begin{align*}
  T''_1(t)
  &\leq c_1 t^3 \int_\eps^\infty u^m \int_0^u v^m  
  \exp\big[-c_2t u^{(d+\beta)/\beta}\big]
    \Big(\int_0^u (u-w)^{2d/\beta}\,dw\Big)\,dv du\\
  &\leq c_3 t^3 \int_\eps^\infty u^m u^{m+1} u^{2d/\beta +1}
    \exp\big[-c_2t u^{(d+\beta)/\beta}\big]\, du.
\end{align*}
This can easily be bounded by $ct^{\delta}e^{-\delta' t}$
for some $\delta>0$ and $\delta'=c_2/2$.

\end{document}